\def\dsp{\displaystyle}
\def\R{\mathbb{R}}
\def\N{\mathbb{N}}
\def\De{\mathcal{D}e}
\def\0{{\bf 0}}
\def\E{{\bf E}}
\def\F{{\bf F}}
\def\G{{\bf G}}
\def\Q{{\bf Q}}
\def\u{{\bf u}}
\def\W{{\bf W}}
\def\x{{\bf x}}
\def\eps{\varepsilon}
\def\bkappa{\boldsymbol{\kappa}}
\def\bnu{\boldsymbol{\nu}}
\def\fy{\varphi}
\def\tphi{\widetilde{\phi}}
\def\tpsi{\widetilde{\psi}}
\def\btheta{\boldsymbol{\theta}}
\def\div{\mbox{\,{\textrm{div}}}}
\renewcommand{\nfrac}[2]{#1/#2}
\title
{Fokker-Planck equation in bounded domain}
\author{\firstname{Laurent} \lastname{Chupin}}
\address{Université de Lyon\\
INSA de Lyon - Pôle de Mathématiques\\
CNRS, UMR5208, Institut Camille Jordan\\
21~av. Jean Capelle, 69621 Villeurbanne Cedex, France}
\email{laurent.chupin@insa-lyon.fr}
\keywords{Fokker-Planck equation, Bounded domain, Stationary solution, Confinement, Fluid mechanics, Polymer flows}
\subjclass{35J25, 35Q35, 35R60, 76A05, 82D60}
\begin{document}
\begin{abstract}
We study the existence and the uniqueness of a solution~$\fy$ to the linear Fokker-Planck equation $-\Delta \fy + \div(\fy \, \F) = f$ in a bounded domain of $\R^d$ when $\F$ is a ``confinement'' vector field acting for instance like the inverse of the distance to the boundary. An illustration of the obtained results is given within the framework of fluid mechanics and polymer flows.
\end{abstract}
%
\begin{altabstract}
On étudie l'existence et l'unicité de solution~$\fy$ à l'équation de Fokker-Planck linéaire $-\Delta \fy + \div(\fy\, \F) = f$ sur un domaine borné de~$\R^d$ lorsque~$\F$ est un champ de vecteurs ``confinant'' par exemple comme l'inverse de la distance au bord. Une illustration des résultats obtenus est donnée dans le cadre de la mécanique des fluides et des écoulements de polymères.
\end{altabstract}
\maketitle
%
\section{Introduction}\label{sec-introduction}
%
\noindent In this paper we are interested in the so called Fokker-Planck equation 
\begin{equation}\label{FP}
-\Delta \fy + \div(\fy \, \F) = f.
\end{equation}
$\bullet$ In the simplest case (that is $\F=\0$) this equation is known as the Laplace equation (when $f=0$) or as the Poisson equation (when $f\neq 0$). The solutions of these equations are important in many fields of science, notably the fields of electromagnetism, astronomy and fluid dynamics, because they describe the behavior of electric, gravitational and fluid potentials.\\
$\bullet$ More generally, the main reason of the physical interest of equation~\eqref{FP} comes from the fact that it can be put in conservative form $\div(J)=f$ with $J=-\nabla \fy + \fy \, \F$. Thus it can be connected to a generalization to the Fick's law $J=-\nabla\fy$ connecting diffusion flux~$J$ and concentration~$\fy$ in inhomogeneous environments, see~\cite{Escande,Sattin}.\\
$\bullet$ In the dynamical systems framework (see for instance~\cite{Zeeman}) the non-stationary Fokker-Planck equation $\partial_t \fy = \eps \Delta \fy - \div(\fy \, \F)$ is usually introduced. In this case, the function~$\fy$ represents the smooth probability density of a population driven by~$\F$ and subject to $\eps$-small diffusion in the following sense.
The term $\fy \, \F$ is a vector field representing the population~$\fy$ moving with the flow of~$\F$, and so the divergence of this vector field represents a thinning out of the population due to~$\F$, which therefore contributes negatively to the local growth rate of the population,~$\partial_t\fy$. This explains the drive term. Meanwhile the term $\eps \Delta \fy$ represents $\eps$-small diffusion, and contributes positively to the growth rate. The study which is presented here concerns in particular the existence and the uniqueness of a steady-state solution. We note that the theory is closely related to applications, because the steady-state~$\fy$ is an $\eps$-smoothing of the measure on the attractors of the flow of~$\F$ (see~\cite{Zeeman}) and therefore in numerical and physical experiments~$\fy$ can be used to model the data with $\eps$-error.\\
$\bullet$ According to the contexts, the vector field~$\F$ can take various forms. In particular it may occur that physically realistic assumptions do not make it possible to conclude only with the already known results. We will give such a caricatural example in the last part.\\[0.3cm]
{\it Besides the problems of the existence and the uniqueness, the question which interests us is to know which boundary conditions are needed to ensure the existence and the uniqueness of a solution of equation~\eqref{FP} in the bounded case.}\\[0.3cm]
We will see that this depends on~$\F$. When $\F$ is regular enough, i.e. does not diverge too quickly at the boundary, data on~$\fy$ at the boundary of the domain enable to ensure the uniqueness of the solution. We remind of this result at the beginning (in particular because the proof resembles ours). We also say that when the domain does not have boundary, for instance if we are interested in the space~$\R^d$ or on a compact variety without boundary, uniqueness is ensured by imposing the average of~$\fy$.\\[0.3cm]
{\it We prove that, in the bounded case, when $\F$ is not so regular, the "good" condition to ensure uniqueness is still to impose the average of~$\fy$, and that in that case, the unique solution vanishes on the boundary.}

\mathversion{bold}
\subsection{Some known results on equation~\eqref{FP}}\label{some-known}
\mathversion{normal}
%
\noindent Except for the case where $\F=\0$, a particularly simple case corresponds to $\F=\nabla V$ ($V$ is assumed to be regular and differentiable) and $f=0$. In this case $\fy = \exp(V)$ is a solution of~\eqref{FP} in the bounded case as well as in the compact case or in the unbounded case. 
In the same way, we can easily prove that the case $\F= \nabla V + \G$ admits the solution $\fy = \exp(V)$ if and only if $\div(\G) + \G \cdot \nabla V = 0$. 
Up to a renormalization, the average of such a solution will be equal to~$1$, as soon as $\exp(V)\in L^1(\Omega)$. 
The average value is consequently an essential ingredient to have uniqueness of the solution of equation~\eqref{FP} and we could be interested in the following problem
\begin{equation}\label{pb-zeeman}
\left\{
\begin{aligned}
-\Delta \fy + \div(\fy \, \F) &= 0 \qquad \text{in $\Omega$},\\
\text{with}\quad \int_\Omega \fy &= 1.
\end{aligned}
\right.
\end{equation}
In the compact case without boundary E.C. Zeeman~\cite{Zeeman} proves the existence and uniqueness of a solution~$\fy$ for an arbitrary smooth vector field~$\F$ (and without term source $f=0$) on a compact manifold by the Perron-Fröbenius method:
\begin{theo}
Let~$\Omega$ be a compact manifold without boundary.\\
If~$\F\in \mathcal C^\infty(\Omega)$ then there exists a unique non negative solution of~\eqref{pb-zeeman}.
\end{theo}
\noindent Without proof in the non-compact case (as writes E.C. Zeeman on p.~152, the extension of such results to non-compact case is an open question), E.C. Zeeman gives some example with $\Omega=\R^d$. These examples show that in the unbounded case uniqueness follows from some ``boundary'' conditions on~$\F$, which are given by the behavior of~$\F$ outside large sphere.
Many other works concern these equations of the Fokker-Planck type in~$\R^d$. Most of these works describe specific assumptions for the potential~$\F$ at infinity. Let us quote as an example the beautiful recent series of works by Hérau, Nier and Helffer~\cite{Helffer,Herau} about the linear kinetic Fokker-Planck equation. See also the article~\cite{Noarov} by Noarov in wich the author gives some smallness conditions in some norm and rapid decay at infinity for~$\F$ to ensure the existence of a solution other than identical zero (in the case $f=0$).\\[0.3cm]
In this article, we are interested in a possible generalization in bounded domains. Usually, such a problem is coupled with boundary conditions (Dirichlet, Neumann or mixted boundary conditions). For instance, the natural weak formulation of the problem
\begin{equation*}
\left\{
\begin{aligned}
-\Delta \fy + \div(\fy \, \F) &= f \qquad \text{in $\Omega$},\\
\text{with}\quad \fy &= 0 \qquad \text{on $\partial \Omega$}
\end{aligned}
\right.
\end{equation*}
is written
\begin{equation}\label{weak-droniou}
\left\{
\begin{aligned}
& \text{Find $\fy\in H^1_0(\Omega)$ such that for all $\psi \in H^1_0(\Omega)$}\\
&\hspace{0cm} \int_\Omega \nabla \fy \cdot \nabla \psi
- \int_\Omega \fy \, \F \cdot \nabla \psi
= \langle f,\psi \rangle,
\end{aligned}
\right.
\end{equation}
where $\langle f,\psi \rangle$ corresponds to the duality product $(H^{-1}(\Omega), H^1_0(\Omega))$.
In order that all terms in \eqref{weak-droniou} be defined, the minimal hypotheses on data are: $f\in H^{-1}(\Omega)$ and, thanks to the classical Sobolev injections, $\F\in L^{d_*}(\Omega)$ where $d_*=d$ for $d\geq 3$ and $d_*\in \,]2,+\infty[$ for $d=2$. Within this framework, we have (see \cite{Droniou}):
\begin{theo}\label{th-Droniou}
Let $\Omega$ be a bounded domain of $\R^d$, $d\geq 2$.\\
If $f\in H^{-1}(\Omega)$ and $\F\in L^{d_*}(\Omega)$ then there exists a unique solution of~\eqref{weak-droniou}.
\end{theo}
\noindent The proof of the generalization which we propose is primarily based on the proof of this theorem. The main difficulty which appears for the study of problem~\eqref{weak-droniou} is the following: although the operator $\Delta$ is coerciv, the operator $-\Delta + \div(\cdot \, \F)$ is generally not coerciv. The reason for which the result is still valid lies in the (conservative) form of the term $\div (\fy\, \F)$. Let us note that an equivalent theorem can be proved (see \cite{Droniou}) for equations of kind $-\Delta \fy + \G \cdot \nabla \fy = f$ but that it is not possible to obtain a similar general result for equations of the type $-\Delta \fy + \div(\fy \, \F) + \G \cdot \nabla \fy = f$. In fact, the sum $\div(\fy \, \F) + \G \cdot \nabla \fy$ makes appear zeroth order terms and it is well known that the solutions of $-\Delta \fy + \lambda\,\fy = 0$, $\lambda \in \R$, with homogeneous Dirichlet boundary condition are not unique.\\
Moreover, J.~Droniou proved that the same result is valid for other boundary conditions as non-homogeneous Dirichlet, Fourier or mixed boundary conditions (using more regularity for the domain $\Omega$, say with Lipschitz continuous boundary). Concerning the Neumann boundary conditions, J.~Droniou and J.-L.~Vazquez recently showed that the same problem admits, for each fixed mean value, a unique solution with the said mean value (see~\cite{Droniou1}).\\
An other question is debated in the present paper: Which necessary and sufficient conditions must be placed on~$f$, and what are the degrees of freedom on the solutions if $\F$ is not regular enough~?

\subsection{An (partial) answer}

\noindent We show in this article that if the normal component of the vector~$\F(\x)$ behaves
\footnote
{
We can verify that we have $\F\notin L^{d_*}(\Omega)$, and that consequently the announced result is a generalization of the Theorem~\ref{th-Droniou}. 
} 
like 
$\frac{\alpha}{\mathrm{dist}(\x,\partial \Omega)}$
in a neighborhood of the boundary $\partial \Omega$ with $\alpha>1$ then there exists a unique solution~$\fy$ to the Fokker-Planck equation~\eqref{FP} as soon as the average of~$\fy$ is given. Moreover, we can show that this unique solution automatically vanishes at the boundary~$\partial \Omega$.
More precisely we prove
\begin{theo}
Let $\Omega$ be a bounded domain of $\R^d$, $d\geq 2$.\\ 
Let $f\in H^{-1}_M$ (this space will be precised later) and $\F=\bkappa+\nabla V$ where $\bkappa\in L^\infty(\Omega)$ and $V\in \mathcal C^\infty(\Omega)$ satisfies $V = -\infty$ on the boundary of~$\Omega$. 
Under assumptions~\eqref{H1}, \eqref{H2} and~\eqref{H3} (see more details page~\pageref{the-main}) then there exists a unique solution of the Fokker-Planck equation~\eqref{FP} such that $\int_\Omega \fy = 1$.
\end{theo}
\noindent Obviously, the additive assumptions~\eqref{H1}, \eqref{H2} and~\eqref{H3} enable to take into account the examples where the normal part of~$\F(\x)$ behaves like $\frac{\alpha}{\mathrm{dist}(\x,\partial \Omega)}$ in a neighborhood of the boundary $\partial \Omega$ with $\alpha>1$.\\
They are not satisfied when $\alpha\leq 1$. Thus, there remain many cases without answers: for instance, when the normal part of~$\F$ behaves like $\frac{\alpha}{\mathrm{dist}(\x,\partial \Omega)}$ with $\alpha \leq 1$, and when the normal part of~$\F$ behaves like $\frac{\alpha}{\mathrm{dist}(\x,\partial \Omega)^\beta}$ with $\alpha >0$ and $\beta > 1$.\\
The assumptions~\eqref{H1}, \eqref{H2} and~\eqref{H3} on the potential~$V$ are rather difficult to apprehend. The reason for which we have these assumptions is the following: they are used in this form in each steps of the proof (primarily in the various lemmas). Thus if one of the steps of the proof can be shown in another way that presented here, we can hope to be freed from certain corresponding assumptions.
%
\subsection{Outline of the paper}

\noindent The paper is organized as follows:\\
$\bullet$ In Section~\ref{sec-tools}, we give mains tools adapted to the studied problem. First of all, some tools about differential geometry to understand ``explosive'' boundary conditions. Next we give all the lemmas which are used in the main proof.\\
$\bullet$ In Section~\ref{sec-assumptions} the precise statement of the main result is enunciated, see Theorem~\ref{the-main}, page~\pageref{the-main}.\\
$\bullet$ The Section~\ref{sec-proof} is devoted to the proof of Theorem~\ref{the-main}. It is composed of two parts: the existence proof and the uniqueness proof.\\
$\bullet$ The last section (Section~\ref{sec-numeric}) gives an application to fluid mechanics and some numerical results.

\section{Main implements}\label{sec-tools}

\noindent In~\cite{Chupin}, the author gives an existence and a uniqueness result for a Fokker-Planck equation for a \underline{particular} vector field~$\F$ and in a \underline{particular} domain~$\Omega$ which is a ball. For more complex domains, we must understand the effect of the geometry in the proof. We will present in this part some elements of differential geometry adapted for our calculus. Next, we will precise the functional framework adapted to the Fokker-Planck equation of this paper. Finally, we will give multiple fundamental lemmas which are use for the proof of the Theorem~\ref{the-main}.

\subsection{Elementary differential geometry}\label{subsection_geometry}

\noindent The results of this part are largely inspired on Subsection 2.1 of the paper~\cite{Boyer} and on the annexe C of the book~\cite{Boyer-Fabrie}.\\
Let~$\Omega$ be a smooth (say $\mathcal C^2$) bounded domain in~$\R^d$, $d\geq 2$. We denote by~$\Gamma$ its boundary and by~$\bnu$ the outward unitary normal to $\Gamma$. The distance between any $\x\in \R^d$ and the boundary~$\Gamma$ is denoted by $\delta_\Gamma(\x)$.\\
For any $\eps\geq 0$, we introduce the open subset of $\Omega$:
\begin{equation*}
  \mathcal O_\eps = \{\x\in \Omega\, ;\, \delta_\Gamma(\x)<\eps\}
\quad \text{and} \quad
  \Omega_\eps = \{\x\in \Omega\, ;\, \delta_\Gamma(\x)>\eps\}.
\end{equation*}
It is classical that, for~$\eps$ small enough, the two maps $\delta_\Gamma$ (called distance to~$\Gamma$) and $P_\Gamma$ (called projection on~$\Gamma$) exist and are regular on $\overline{\mathcal O_{\eps}}$. This allow to use tangential and normal variable near~$\Gamma$, defining for any function $f:\overline{\mathcal O_{\eps}} \rightarrow \R$ the corresponding function $\widetilde f:[0,\eps]\times \Gamma \rightarrow \R$ by\footnote{For non-ambiguous cases,~$\widetilde f$ and~$f$ will be together denote~$f$.}
\begin{equation*}
  \forall (r,\btheta)\in [0,\eps]\times \Gamma\, ,\quad \widetilde f(r,\btheta) = f\big(\btheta-r\, \bnu(\btheta)\big).
\end{equation*}
%
\begin{figure}[htbp]
\begin{center}
{\psfrag{Gamma}{$\Gamma$}\psfrag{x}{$\x$}\psfrag{theta}{$\btheta$}\psfrag{Px}{$P_\Gamma(\x)$}\psfrag{Omegaeps}{$\Omega_\eps$}\psfrag{Oeps}{$\mathcal O_\eps$}\psfrag{dx}{$\delta_\Gamma(\x)$}\psfrag{eps}{$\eps$}\psfrag{nu}{$\bnu(\btheta)$}
\includegraphics[width=8cm]{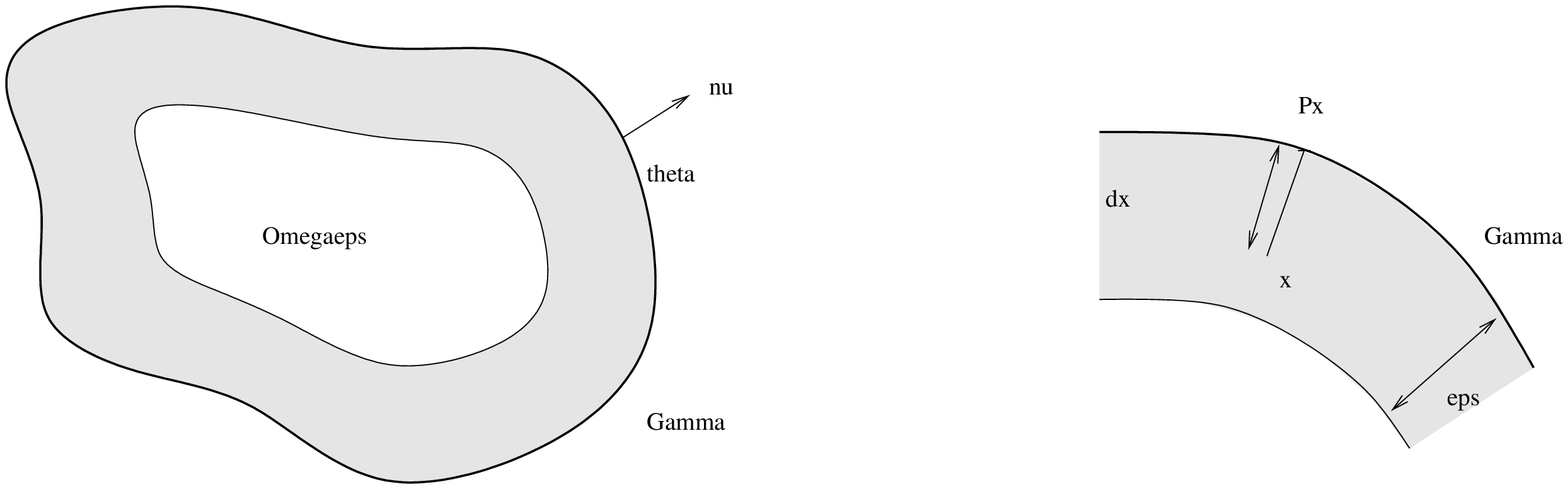}
}
\end{center}
\end{figure}

\noindent Moreover, for any $f\in L^1(\mathcal O_\eps)$ we have the following change of variables formula:
\begin{equation}\label{change_of_variables}
  \int_{\mathcal O_\eps} f(\x)\, d\x
=
  \int_\Gamma \int_0^\eps \widetilde f(r,\btheta)\, |J(r,\btheta)|\, dr\, d\btheta,
\end{equation}
where $dr$ and $d\btheta$ corresponds to the Lebesgue measure on $[0,\eps]$ and $\Gamma$ respectively, and where~$J$ is the jacobian determinant of the previous change of variables.
Introduce $\kappa_M = \sup_{\btheta\in \Gamma}\sup_{i} \kappa_i(\theta)$, where $\kappa_1(\btheta)$,..., $\kappa_{d-1}(\btheta)$ are the principal curvatures of~$\Gamma$ at~$\btheta$, we have for $0\leq r\leq \eps$ and for $\btheta\in \Gamma$ 
\begin{equation*}
  (1-\eps\, \kappa_M)^{d-1} \leq J(r,\btheta) \leq 1.
\end{equation*}
Using the change of variables~\eqref{change_of_variables} we deduce that for $f\in L^1(\mathcal O_\eps)$ we have
\begin{equation}\label{change_of_variables_approx}
(1-\eps\kappa_M)^{d-1} \hspace{-0.2cm} \int_\Gamma \hspace{-0.1cm}\int_0^\eps \hspace{-0.1cm} \widetilde f(r,\btheta)\, dr d\btheta
\leq
  \int_{\mathcal O_\eps} \hspace{-0.2cm} f(\x)\, d\x
\leq
  \int_\Gamma \hspace{-0.1cm} \int_0^\eps \hspace{-0.1cm} \widetilde f(r,\btheta)\, dr d\btheta.
\end{equation}
Roughly speaking these relations show us that for thin tubular neighborhood of~$\Gamma$ in~$\Omega$ the jacobian $J(r,\btheta)$ is equivalent to~$1$ for small $r$, uniformly with respect to $\btheta\in \Gamma$. Consequently, the relations~\eqref{change_of_variables_approx} give an approximation of $\int_{\mathcal O_\eps} f$ by $\int_\Gamma \int_0^\eps \widetilde f$.\\
Notice too that it is possible to define~$\delta_\Gamma$ as a regular function on~$\overline{\Omega}$. This extension will be also noted~$\delta_\Gamma$.

\subsection{Functional spaces}

\noindent The Fokker-Planck equation~\eqref{FP} make appear a potential~$V$, assume to be smooth on~$\Omega$, {\it via} the force $\F=\bkappa + \nabla V$. This potential is supposed to be confinant so that we assume that $e^V\in L^1(\Omega)$. We can always define a maxwellian function~$M$ by
\begin{equation*}
M = \frac{e^V}{\int_\Omega e^V}.
\end{equation*}
Notice that $M\in \mathcal C^\infty(\Omega:\R^*_+)$ with $\int_\Omega M = 1$.
Notice too that the interesting cases for the present study correspond to a maxwelian~$M$ vanishing on the boundary $\Gamma=\partial \Omega$ (that is $V=-\infty$ on $\Gamma$).
All the maxwellians considered in this paper will satisfy $M=0$ on~$\Gamma$.
The Fokker-Planck equation~\eqref{FP} is written
\begin{equation}\label{FP1}
\div ( \fy \bkappa - M\nabla \Big( \frac{\fy}{M} \Big)) = f.
\end{equation}
From the peculiar form of this Fokker-Planck equation~\eqref{FP1}, the adapted functional spaces use Sobolev weight spaces on the domain $\Omega\subset \R^d$. More precisely, we introduce
\begin{equation*}
  L^2_M := M L^2(M dx)
\quad \text{and} \quad
  H^1_M := M H^1(M dx),
\end{equation*}
endowed with their natural norms respectively given by
\begin{equation*}
\begin{aligned}
& \|\fy\|_{L^2_M}^2 = \int_\Omega M\Big|\frac{\fy}{M}\Big|^2,\\
& \|\fy\|_{H^1_M}^2 = \int_\Omega M\Big|\frac{\fy}{M}\Big|^2 + M\Big|\nabla \Big( \frac{\fy}{M} \Big) \Big|^2.
\end{aligned}
\end{equation*}
Note that a large literature on weighted Sobolev spaces exists, see for instance~\cite{stredulinsky,turesson}, the references given in the notes at the end of Chapter 1 of~\cite{HKM} or the classical book~\cite{Triebel}.
Among all the weights which are generally considered in the literature, only some are vanished on~$\R^d$ as it is the case of the weight~$M$.
We refer to~\cite{Triebel} for references of spaces with weights which equal to a power of the distance to the boundary.
Nevertheless, the spaces~$L^2(M dx)$ and~$H^1(M dx)$ being spaces with "traditional" weights, the spaces~$L^2_M$ and~$H^1_M$ are not it.
For this reasons, the results shown in the following section are not always "traditional" corollaries (except some whose proof is a direct consequence of the results in~$L^2(M dx)$ and~$H^1(M dx)$, see for example the proof of lemma~\ref{lem-compacity}).\par
\noindent Note first that the two spaces~$L^2_M$ and~$H^1_M$ are Hilbert spaces (see~\cite[Th.~3.2.2a]{Triebel}).
\noindent We introduce the following normalized subspace
\begin{equation*}
H^1_{M,0} := \{\psi\in H^1_M \ ;\ \int_\Omega \psi = 0\}.
\end{equation*}
In the sequel, the space $H^1_M$ will by equipped too with the semi-norm defined by
$$
\|\psi\|_{H^1_{M,0}}^2 = \int_\Omega M\Big| \nabla \Big( \frac{\psi}{M} \Big) \Big|^2,
$$
and we will see (Lemma~\ref{lem-kernel} on page~\pageref{lem-kernel}) that $\|\cdot\|_{H^1_{M,0}}$ is a norm on the space~$H^1_{M,0}$.
Finally, we denote $H^{-1}_M$ the topological dual of $H^1_{M,0}$, that is the set of continuous linear forms on $H^1_{M,0}$. Each application $\chi\in H^{-1}_M$ will be defined by $\chi:\fy\in H^1_{M,0} \mapsto \langle \chi,\fy \rangle \in \R$. By its continuity, for each $\chi\in H^{-1}_M$ there exists $C\in \R$ such that
\begin{equation*}
\forall \fy \in H^1_{M,0} \quad |\langle \chi,\fy \rangle| \leq C \|\fy\|_{H^1_{M,0}}.
\end{equation*}
As it is usual, the smallest of these constants~$C$ is denoted $\|\chi\|_{H^{-1}_M}$: it is the norm of~$\chi$ on $H^{-1}_M$.

\subsection{Properties of the functional spaces}

\noindent The proof of the main theorem (Theorem~\ref{the-main}) follows the ideas of J.~Droniou~\cite{Droniou}. Nevertheless, the proof of J.~Droniou, given in the case where $\F \in L^{d_*}(\Omega)$, does not use these ``degenerated'' spaces and use traditional results concerning usual Sobolev spaces. The essential contributions which are presented ties in the fact that these ``classical'' lemmas in the case where~$M$ does not vanished on~$\overline \Omega$ are still true (sometimes in a weaker form) when~$M$ is a maxwellian as previously introduced, and in particular when $M=0$ on~$\Gamma$. 
So, the goal of this subsection is to give some essential properties of these functional spaces (Poincaré-type inequality, Sobolev injection, compacity result, Hardy-type inequality...).\\[0.3cm]
Notice that in the estimates, the symbol~$\lesssim$ means ``up to a harmless multiplicative constant'', allowed to depend on the domain~$\Omega$ only.\\[0.3cm]
The first result that we present is a result allowing to controlled $\frac{1}{\delta_\Gamma}\frac{\fy}{\sqrt M}$ in~$L^2(\Omega)$ as soon as $\fy \in H^1_M$. This result can be seen as an inequality of Hardy-type
\footnote{
Hardy inequality indicates that for a function~$f$ defined on a bounded domain~$\Omega$ of~$\R^d$ and vanishing on the boundary~$\Gamma$ of~$\Omega$ we get $| \nfrac{f}{\delta_\Gamma} |_{L^2(\Omega)} \lesssim |\nabla f|_{L^2(\Omega)}$ where $\delta_\Gamma$ corresponds to the distance to the boundary~$\Gamma$.
}.
We will prove this first lemma under the following assumption
\footnote{
Recall that, as it is specified just before, in this article the function~$M$ is a Maxwellian function which vanished on the boundary of the domain. The assumptions introduced here are consequently additional assumptions.
}
\begin{equation}\label{H1}\tag{$\mathcal H_1$}
\left\{
\begin{aligned}
& \exists a<1 \quad \Big( \frac{\nabla_RM}{M} \Big)^2 + 2 \, \nabla_R\Big( \frac{\nabla_RM}{M} \Big)  \geq \frac{-a}{\delta_\Gamma^2}, \\
& \nabla_R M(0)=0, \\
& \exists b>0 \quad \nabla_R M \int_\Omega \frac{1}{M} < b.
\end{aligned}
\right.
\end{equation}
These hypotheses will be only used in the neighbourhood of the boundary~$\Gamma$ of~$\Omega$. In such neighbourhood, the notation~$\nabla_R$ corresponds to the radial derivative (that is to say the derivative in the direction of the normal vector to the boundary).\par
\noindent The various assumptions introduced in this part will be discussed in Section~\ref{sec-assumptions}. Nevertheless, it is important to notice that the three assumptions formulated in~\eqref{H1} are independent. For example, in the radial case the function~$M$ defined by $M(r)=\sqrt{r}$ satisfy the last point of~\eqref{H1} but does not satisfy the second point. Reciprocally, $M(r)=r/ln(r)$ satisfy the second point but does not satisfy the last point.
%
\begin{lemma}[Hardy-type inequality]\label{lem-hardy}
If~$M$ satisfies~\eqref{H1} then for any $\fy \in H^1_M$ we have
\begin{equation*}
\int_\Omega \frac{1}{\delta_\Gamma^2}\, M \Big| \frac{\fy}{M} \Big|^2 \lesssim \|\fy\|_{H^1_M}^2.
\end{equation*}
\end{lemma}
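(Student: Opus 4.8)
The plan is to reduce the weighted Hardy inequality to a one-dimensional estimate in the normal variable near $\Gamma$, using the change of variables of Subsection~\ref{subsection_geometry}, and then to control the interior region trivially. Write $u = \fy/M$, so that $\|\fy\|_{H^1_M}^2 = \int_\Omega M\,u^2 + \int_\Omega M\,|\nabla u|^2$ and the claim becomes $\int_\Omega \delta_\Gamma^{-2}\,M\,u^2 \lesssim \|\fy\|_{H^1_M}^2$. Fix $\eps$ small enough that $\delta_\Gamma$ and $P_\Gamma$ are regular on $\overline{\mathcal O_\eps}$, so that the tubular coordinates $(r,\btheta)$ are available. On the interior part $\Omega_\eps = \{\delta_\Gamma > \eps\}$ one has $\delta_\Gamma^{-2} \leq \eps^{-2}$, hence $\int_{\Omega_\eps} \delta_\Gamma^{-2}\,M\,u^2 \leq \eps^{-2}\int_\Omega M\,u^2 \leq \eps^{-2}\,\|\fy\|_{H^1_M}^2$, which is harmless. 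So everything is concentrated on $\mathcal O_\eps$.

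On $\mathcal O_\eps$ I would pass to coordinates via~\eqref{change_of_variables_approx}: up to the jacobian, which is bounded above by $1$ and below by $(1-\eps\kappa_M)^{d-1}>0$ for $\eps$ small, it suffices to bound
$$
\int_\Gamma \int_0^\eps \frac{1}{r^2}\, M(r,\btheta)\, u(r,\btheta)^2\, dr\, d\btheta
$$
by $\int_\Gamma \int_0^\eps M\,u^2 + M\,|\partial_r u|^2\,dr\,d\btheta$ (and similar tangential terms, which only help). For fixed $\btheta$ this is a genuine one-dimensional weighted Hardy inequality on $(0,\eps)$ with weight $M(\cdot,\btheta)$ vanishing at $r=0$. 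The natural device is the ``ground-state substitution'': for a positive weight $w$ on $(0,\eps)$ and a test function $u$, integration by parts gives, for any admissible weight function $\sigma$,
$$
\int_0^\eps w\,|u'|^2 = \int_0^\eps w\,\Big|\Big(\tfrac{u}{\sigma}\Big)'\Big|^2 \sigma^2 - \int_0^\eps \Big(\tfrac{(w\sigma')}{\sigma} + \tfrac{w\sigma''}{\sigma} - \cdots\Big) u^2 + \text{boundary terms},
$$
but more cleanly: writing $u = \sqrt{M}\,v$ one computes $M|u'|^2 = M\,|\partial_r(\sqrt M v)|^2$ and, after expanding and integrating by parts the cross term $\int M\,\partial_r(\sqrt M)\,\partial_r(v^2)/\sqrt M$, one obtains
$$
\int_0^\eps M\,|\partial_r u|^2 = \int_0^\eps M\,|\partial_r v|^2 - \tfrac14\int_0^\eps \Big[\Big(\tfrac{\nabla_R M}{M}\Big)^2 + 2\,\nabla_R\Big(\tfrac{\nabla_R M}{M}\Big)\Big] M\,v^2 + \text{b.t.}
$$
Here the boundary term at $r=0$ is controlled because $\nabla_R M(0)=0$ (second line of~\eqref{H1}), and the boundary term at $r=\eps$ is an interior term already absorbed in $\|\fy\|_{H^1_M}^2$. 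The first line of~\eqref{H1} then says precisely that the bracket is $\geq -a/r^2$ with $a<1$, so rearranging,
$$
\frac{1-a}{4}\int_0^\eps \frac{M\,v^2}{r^2} \;\leq\; \int_0^\eps M\,|\partial_r u|^2 + \text{(b.t. and lower order)} ,
$$
i.e. $\int_0^\eps r^{-2} M\,u^2 \lesssim \int_0^\eps M\,|\partial_r u|^2 + \int_0^\eps M\,u^2$, with constant depending on $a$ but not on $\btheta$. Integrating over $\Gamma$, undoing the change of variables, and adding the $\Omega_\eps$ estimate finishes the proof; the third line of~\eqref{H1} (the bound $\nabla_R M \int_\Omega M^{-1} < b$) enters to make the residual boundary term at $r=\eps$, or the crudely estimated tail, genuinely finite and harmless.

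I expect the main obstacle to be the careful bookkeeping of the two boundary terms produced by the integration by parts in the ground-state substitution: the one at $r=0$, where $M\to 0$ and the integrand is a priori singular, must be shown to vanish (this is exactly why $\nabla_R M(0)=0$ is imposed), and the one at $r=\eps$ must be re-expressed and absorbed into the bulk $H^1_M$-norm rather than left as an uncontrolled trace on $\{\delta_\Gamma=\eps\}$. A secondary point is justifying all these manipulations first for smooth $u$ with compact support away from $r=0$ and then passing to the limit for general $\fy\in H^1_M$ by density, which requires knowing that such functions are dense in $H^1_M$ for the relevant topology — something that should follow from the Hilbert-space structure recalled just before the lemma, or can be handled by a truncation/mollification argument in the weighted space $H^1(M\,dx)$.
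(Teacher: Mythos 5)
Your overall strategy coincides with the paper's: split $\Omega$ into $\Omega_\eps$ (where $1/\delta_\Gamma$ is bounded and the estimate is trivial) and the tubular neighbourhood $\mathcal O_\eps$, pass to $(r,\btheta)$ coordinates via~\eqref{change_of_variables_approx}, perform the fibrewise ground-state substitution, integrate the cross term by parts so that the first line of~\eqref{H1} bounds the resulting potential below by $-a/r^2$, and conclude with the classical one-dimensional Hardy inequality. (One slip in the bookkeeping: with $u=\fy/M$ and $v=u/\sqrt M$ one gets $M|\partial_r u|^2=M^2|v'|^2+\dots$, and the quantity $\int M v^2/r^2=\int u^2/r^2$ you end up bounding is not the one in the statement. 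The correct substitution is $h=\sqrt M\,u=\fy/\sqrt M$, for which $\int_0^\eps M|\partial_r u|^2=\int_0^\eps |h'|^2+\tfrac14\int_0^\eps\big[(\tfrac{M'}{M})^2+2(\tfrac{M'}{M})'\big]h^2-\tfrac12\big[\tfrac{M'}{M}h^2\big]_0^\eps$ and $\int_0^\eps M u^2/r^2=\int_0^\eps h^2/r^2$ is exactly the left-hand side.)

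The genuine gap is the boundary term at $r=0$. It equals $\tfrac{M'(r)}{M(r)}h(r)^2=M'(r)\big(\tfrac{\fy}{M}\big)^2(r)$, and the hypothesis $\nabla_R M(0)=0$ alone does not make it vanish, because $\fy/M$ is a priori unbounded as $r\to 0$: $M\to 0$ there, and membership in $H^1_M$ only controls $\fy/M$ in weighted norms. The paper devotes an entire first step to this point. It changes variables to $s=\int_r^\eps dt/M(t,\btheta)$ (which sends $r=0$ to $s=+\infty$ since $\int_0^\eps dt/M=+\infty$), applies Cauchy--Schwarz to the finite integral $\int_0^\infty|g'|^2\,ds$ to obtain the growth bound $|\fy/M|\leq\alpha\sqrt s+C_{\btheta,\alpha}$ for every $\alpha>0$, and translates this back into $\tfrac{M'}{M}h^2\leq\alpha^2\,M'(r)\int_r^\eps\tfrac{dt}{M}+M'(r)\,C_{\btheta,\alpha}^2$; the first summand is then killed by the \emph{third} line of~\eqref{H1} together with $\alpha\to 0$, and only the second by $\nabla_R M(0)=0$. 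So the third hypothesis of~\eqref{H1} is consumed at $r=0$ --- not, as you suggest, at $r=\eps$, where the paper simply multiplies $\fy$ beforehand by a cutoff $\gamma_\eps$ supported in $\mathcal O_\eps$ so that $h(\eps)=0$ and no trace term arises there at all. You correctly flag the $r=0$ boundary term as the main obstacle, but without some quantitative control on the growth of $\fy/M$ near $\Gamma$ of the above kind, your argument does not close.
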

%
\noindent In fact, as can be seen from the proof, we shall not need entire norm of~$\fy$ in~$H^1_M$ since we will only use the radial part of the gradient.

\begin{proof} For any~$\eps$ small enough (more precisly, $\eps\leq \eps_\Omega$, see Part~\ref{subsection_geometry}) we have $\Omega = \overline{\Omega_\eps} \cup \mathcal O_\eps$ and
\begin{equation*}
\int_\Omega \frac{M}{\delta_\Gamma^2}\, \Big| \frac{\fy}{M} \Big|^2
=
\int_{\Omega_\eps} \frac{M}{\delta_\Gamma^2}\, \Big| \frac{\fy}{M} \Big|^2
+
\int_{\mathcal O_\eps} \frac{M}{\delta_\Gamma^2}\, \Big| \frac{\fy}{M} \Big|^2.
\end{equation*}
Since $\nfrac{1}{\delta_\Gamma}$ is bounded in $\Omega_\eps$ we easily deduce that
\begin{equation*}
\int_{\Omega_\eps} \frac{M}{\delta_\Gamma^2}\, \Big| \frac{\fy}{M} \Big|^2
\lesssim
\int_{\Omega_\eps} M \Big| \frac{\fy}{M} \Big|^2
\leq 
\|\fy\|_{L^2_M}^2
\end{equation*}
and the main difficulty to prove the Lemma~\ref{lem-hardy} is concentrated in the control of the integral 
\begin{equation*}
I_0 := \int_{\mathcal O_\eps} \frac{M}{\delta_\Gamma^2}\, \Big| \frac{\fy}{M} \Big|^2.
\end{equation*}
For similar reasons, we can suppose that
\begin{equation}\label{eq:fnulleaubord}
  \fy\big|_{\overline{\Omega_\eps}\cap \overline{\mathcal O_\eps}} = 0.
\end{equation}
In fact, introduce a regular function~$\gamma_\eps$ (only depending on~$\Omega$ and~$\eps$) wich vanished outside the $\eps$-neighborhood~$\mathcal O_\eps$ of~$\Gamma$ and is equal to~$1$ in the $\frac{\eps}{2}$-neighborhood. The estimate in Lemma~\ref{lem-hardy} clearly holds for $(1-\gamma_\eps)\fy$ and the proof therefore has to by conducted only for~$\gamma_\eps\fy$, which for sake of simplicity will be denote~$\fy$ in the following.\\
%
\begin{figure}[htbp]
\begin{center}
{\psfrag{Gamma}{$\Gamma$}\psfrag{un}{$\gamma_\eps=1$}\psfrag{zero}{$\gamma_\eps=0$}\psfrag{Omegaeps}{$\Omega_\eps$}\psfrag{Oeps}{$\mathcal O_\eps$}\psfrag{bord}{$\overline{\Omega_\eps}\cap \overline{\mathcal O_\eps}$}
\includegraphics[width=4cm]{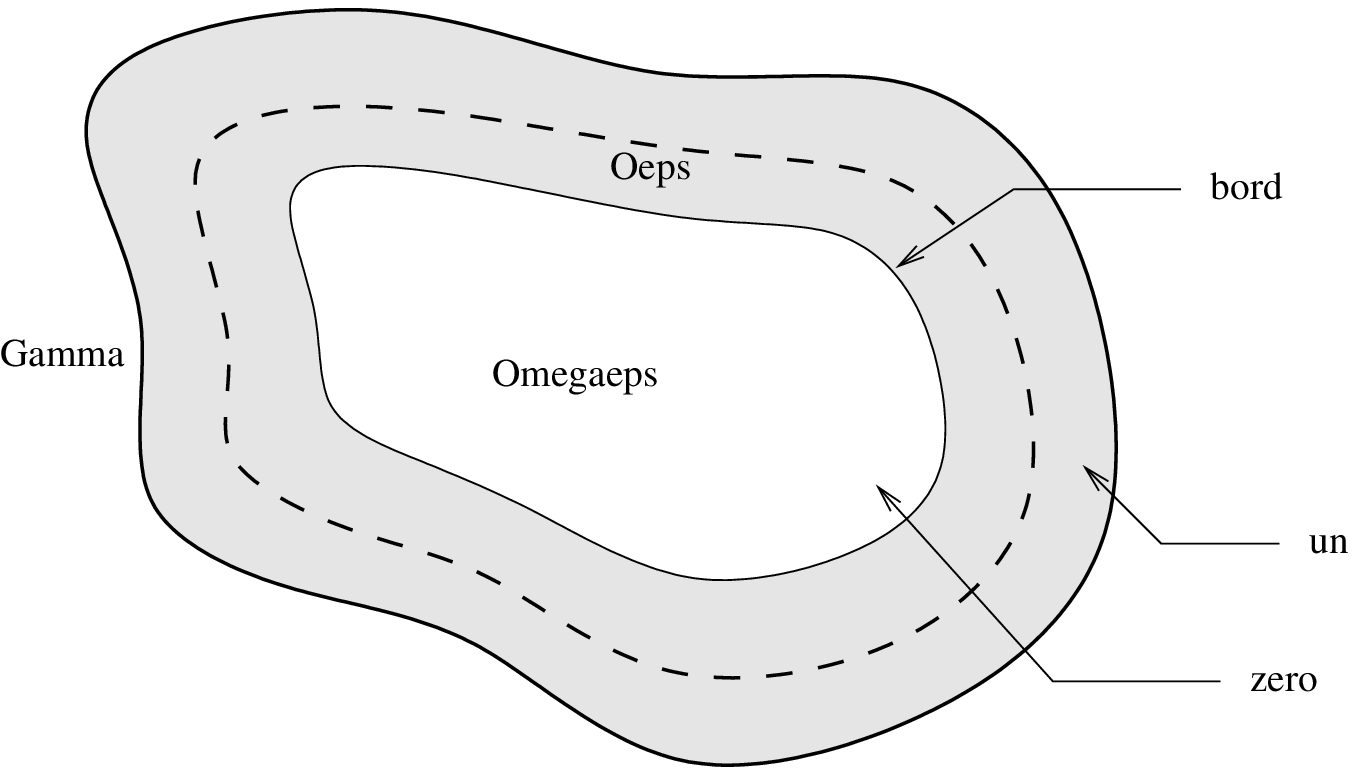}
}
\end{center}
\end{figure}
%

\noindent The value of $\eps$ being able to be chosen as small as desired, we shall use copiously the change of variables~\eqref{change_of_variables} as well as approximation~\eqref{change_of_variables_approx}.\\
For each $\btheta\in \Gamma$ let introduce the function
\begin{equation*}
h_{\btheta}:r\in\, ]0,\eps] \mapsto \frac{ \fy(r,\btheta)}{\sqrt{ M(r,\btheta)}}.
\end{equation*}
Note that the relation~\eqref{eq:fnulleaubord} implies that~$h_{\btheta}(\eps)=0$.\\
To simplify the notations, when there are no ambiguities, we do not note the dependence with respect to the variable~$\btheta$ in~$h_{\btheta}$ and in~$M$. 
The integral~$I_0$ writes using the approximation~\eqref{change_of_variables_approx}
\begin{equation*}
I_0 
=
\int_{\Gamma} \int_0^\eps \frac{h(r)^2}{r^2}\, dr\, d\btheta
\end{equation*}
The goal is to control~$I_0$ with $\|\fy\|_{H^1_M}$. We will proceed in two steps:

\begin{enumerate}
\item We prove that $h(r)\nfrac{M'(r)}{M(r)}=0$ on $r=0$;
\item We control~$I_0$ with $\|\fy\|_{H^1_M}$.
\end{enumerate}
%
$\bullet$ Step~$(1)$: We use the following change of variable adapted to the maxwellian~$M$:
\begin{equation*}
\begin{aligned}
\Phi : \, (r,\btheta)\in \,]0,\eps]\times \Gamma \, \longmapsto & \, (s,\btheta)\in [0,+\infty[\times \Gamma,\\
& \text{where}\quad s=\int_r^\eps \frac{dt}{\widetilde M (t,\btheta)}.
\end{aligned}
\end{equation*}
Notice that the jacobian determinant of $\Phi(r,\btheta)$ egals $\nfrac{-1}{\widetilde M (r,\btheta)}$ and therefore, $M$ being positive on~$\Omega$, it does not cancel on $]0,\eps]\times \Gamma$. Moreover, for all $\btheta\in \Gamma$ we have $\lim_{r\rightarrow 0}\Phi(r,\btheta)=(+\infty,\btheta)$ since using the assumption on~$M$ we get $\int_0^\eps \nfrac{dr}{\widetilde M (r,\btheta)} = +\infty$. Consequently $\Phi$ is a local diffeomorphism from $]0,\eps]\times \Gamma$ to $\Phi(]0,\eps]\times \Gamma) = [0,+\infty[\times \Gamma$.\\
For any function $\widetilde f:\, ]0,\eps[\times \Gamma \mapsto \R$ we will define the function, noted $\widehat f:\, ]0,+\infty[\times \Gamma$, by $\widetilde f = \widehat f \circ \Phi$. Using the change of variables introduce in Subsection~\ref{subsection_geometry}, for any function $f:\mathcal O_{\eps} \mapsto \R$ we have define the functions $\widetilde f:\, ]0,\eps[\times \Gamma \mapsto \R$ and $\widehat f:\, ]0,+\infty[\times \Gamma$ such that, with the previous notations for the name of variables:
\begin{equation*}
f(\x) = \widetilde f(r,\theta) = \widehat f(s,\theta).
\end{equation*}
%
%
In the nonambiguous cases, we will note~$f$, all the functions~$f$,~$\widetilde f$ or $\widehat f$.\\[0.3cm]
As announced before the proof, we only need the radial part of the gradient in the desired estimate. In term of new coordinates, the radial part of the gradient of a function~$f$ defined on~$\Omega_\eps$ corresponds to the derivative with respect to the variable~$r$ in the new coordinates: $\nabla_R f(\x) = \partial_r \widetilde f(r,\btheta)$.\\[0.3cm]
For any $\btheta \in \Gamma$ let $g_{\btheta}$ be the function defined on $]0,+\infty[$ by 
\begin{equation*}
g_{\btheta}(s) := \frac{\fy(\x)}{M(\x)} = \frac{h_{\btheta}(r)}{\sqrt{M(r,\btheta)}}.
\end{equation*}
Derivating with respect to the variable $s$, we obtain (as previously the variable~$\btheta$ will be understood as a parameter and we do not note its dependence): 
\begin{equation*}
M(\x) \left|\nabla_R \left( \frac{\fy(\x)}{M(\x)} \right) \right|^2 d\x 
=
- |g'(s)|^2 |J(r,\btheta)| \, d\btheta\, ds.
\end{equation*}
We deduce that (using the approximation $J\sim 1$ valid for $\eps$ small enough)
\begin{equation*}
I_1:= \int_\Omega M(\x)\left| \nabla_R \left( \frac{\fy(\x)}{M(\x)} \right) \right|^2 d\x 
=
\int_{\Gamma} \left( \int_0^{+\infty} |g'(s)|^2\, ds \right) \, d\btheta.
\end{equation*}
Since $\fy \in H^1_M$ the integral $I_1$ is finite. Consequently, for almost every $\btheta\in \Gamma$ we have 
\begin{equation*}
\int_0^{+\infty} |g'(s)|^2\, ds < +\infty.
\end{equation*}
We deduce that
\begin{equation*}
\forall \alpha > 0 \quad \exists s_\alpha >0 \quad \int_{s_\alpha}^{+\infty} |g'(s)|^2\, ds \leq \alpha^2.
\end{equation*}
Next, using the Cauchy-Schwarz inequality we obtain, for any $s\in\, ]0,+\infty[$,
\begin{equation*}
g(s) = g(s_\alpha)+\int_{s_\alpha}^s g'(t)\, dt 
\leq \left\{
\begin{aligned}
& g(s_\alpha)+\alpha \sqrt{s-s_\alpha} \quad \text{if $s>s_\alpha$}, \\
& \qquad \sup_{[0,s_\alpha]}g \hspace{1.4cm} \text{if $0<s\leq s_\alpha$}.
\end{aligned}
\right.
\end{equation*}
We deduce that  for almost every $\btheta\in \Gamma$, for all $\alpha > 0$ there exists a constant $C_{\btheta,\alpha}>0$ such that, for any $s\in\, ]0,+\infty[$ we have $g(s)\leq \alpha \sqrt{s}+C_{\btheta,\alpha}$. Using the $r$ variable, this result is written: for any $r\in\, ]0,\eps]$
\begin{equation*}
\frac{M'(r)}{M(r)}h(r)^2 \leq \alpha² M'(r) \int_r^\eps \frac{dt}{M(t)} + M'(r) C_{\btheta,\alpha}^2,
\end{equation*}
that enables, see assumption~\eqref{H1}, to obtain for almost every $\btheta \in \Gamma$ the relation
\begin{equation}\label{boundary_term}
\left.\frac{M'(r)}{M(r)}h(r)^2\right|_{r=0}=0.
\end{equation}
%
$\bullet$ Step~$(2)$: Now, we prove the lemma. Since $\fy \in H^1_M$, we know that
\begin{equation*}
I_1 := \int_\Omega M(\x)\Big| \nabla_R \Big( \frac{\fy(\x)}{M(\x)} \Big) \Big|^2 d\x \, \leq \|\fy \|_{H^1_M}^2 < \, +\infty.
\end{equation*}
We express $I_1$ making appear the~$h$ function and using the change of variables $\x\mapsto (r,\btheta)$ together the usual approximation for the jacobian determinant of this change of variable (see the Part~\ref{subsection_geometry} and the relations~\eqref{change_of_variables_approx}).
We obtain
\begin{equation*}
I_1 
=
\int_{\Gamma} \int_0^\eps M(r,\btheta)\Big| \frac{\partial}{\partial r} \Big( \frac{\fy(r,\btheta)}{M(r,\btheta)} \Big) \Big|^2 \, dr\, d\btheta 
:=
\int_\Gamma I_1(\btheta)\, d\btheta,
\end{equation*}
where, for $\btheta\in \Gamma$, the quantity $I_1(\btheta)$ is defined by
\begin{equation*}
\begin{aligned}
I_1(\btheta)
& = \int_0^\eps M(r) \Big( \frac{1}{\sqrt{M(r)}}  h'(r) - \frac{M'(r)}{2M(r) \sqrt{M(r)}} h(r)\Big)^2 dr\\
& = \int_0^\eps \Big(  h'(r)^2 - \frac{M'(r)}{M(r)} h'(r) h(r) + \frac{M'(r)^2}{4M(r)^2} h(r)^2 \Big)\, dr.
\end{aligned}
\end{equation*}
Moreover, an integration by part gives
\begin{equation*}
- \int_0^\eps \frac{M'(r)}{M(r)}h'(r)h(r)\, dr
= \frac{1}{2} \int_0^\eps \Big( \frac{M'(r)}{M(r)} \Big)' h(r)^2\, dr
- \frac{1}{2} \Big[ \frac{M'(r)}{M(r)} h(r)^2 \Big]_0^\eps.
\end{equation*}
We deduce that
\begin{equation*}
\begin{aligned}
& I_1(\btheta)
= \int_0^\eps \Big( h'(r)^2 + \lambda(r) \frac{h(r)^2}{4 r^2} \Big)\, dr 
- \frac{1}{2} \Big[ \frac{M'(r)}{M(r)} h(r)^2 \Big]_0^\eps \\
& \quad \text{with} \quad 
\lambda(r) = \Big( \Big( \frac{M'(r)}{M(r)} \Big)^2 + 2 \Big( \frac{M'(r)}{M(r)} \Big)' \,\Big) r^2.
\end{aligned}
\end{equation*}
The assumption~\eqref{H1} on~$M$ is written $\lambda \geq -a >-1$. Moreover using the equation~\eqref{boundary_term} and the relation~\eqref{eq:fnulleaubord} the braket term is vanished. We obtain
\begin{equation*}
I_1(\btheta)
\geq
\int_0^\eps \Big( h'(r)^2 - a \frac{h(r)^2}{4 r^2} \Big)\, dr.
\end{equation*}
Moreover, thanks to the Hardy inequality (holds since~$h$ vanishes at~$0$, this is a direct consequence of equation~\eqref{boundary_term}), we deduce
\begin{equation}\label{estimate_derivate}
I_1(\btheta)
\geq
(1-a) \int_0^\eps h'(r)^2\, dr.
\end{equation}
Since $a<1$, this control allows us to estimate $\int_0^\eps h'(r)^2 dr$. From the Hardy inequality again, we obtain the following estimate
\begin{equation*}
\int_0^\eps \frac{h(r)^2}{r^2} dr \leq 4 \int_0^\eps h'(r)^2 dr
\leq
\frac{4}{1-a} I_1(\btheta).
\end{equation*}
Integrate with respect to the variable~$\btheta$, we obtain
\begin{equation*}
I_0 
=
\int_{\Gamma} \int_0^\eps \frac{h(r)^2}{r^2}\, dr\, d\btheta
\leq
\frac{4}{1-a} I_1.
\end{equation*}
The function $\fy$ being in $H^1_M$, right hand side termes are bounded by $\|\fy\|_{H^1_M}^2$. Up to the change of variables we deduce
\begin{equation*}
\int_{\mathcal O_\eps} \frac{M}{\delta_\Gamma^2} \, \Big | \frac{\fy}{M} \Big|^2 \lesssim \|\fy\|_{H^1_M}^2,
\end{equation*}
which implies the announced result.
\end{proof}
\noindent The additional hypothesis 
\begin{equation}\label{H2}\tag{$\mathcal H_2$}
\exists c>0 \qquad |\nabla M| \leq \frac{1}{c}\, \frac{M}{\delta_\Gamma}
\end{equation}
implies from Lemma~\ref{lem-hardy} that if $\fy\in H_M^1$ then $\frac{\nabla M}{M}\frac{\fy}{\sqrt{M}} \in L^2(\Omega)$. It is in this form that the Lemma~\ref{lem-hardy} will be generally used in this article.\\
Note that in term of~$\x$ variable, the inequality~\eqref{estimate_derivate} show us that $\nabla_R ( \frac{\fy}{\sqrt{M}} )$ belongs to~$L^2(\Omega)$. This propertie is completed by the next lemma:
%
\begin{lemma}[Inclusion]\label{lem-inclusion}
If~$M$ satisfies~\eqref{H1} and~\eqref{H2} then we have the following inclusions
\begin{equation*}
L^2_M\subset L^2(\Omega) \quad \text{and} \quad H^1_M\subset H^1_0(\Omega).
\end{equation*}
More precisely, if $\fy \in L^2_M$ then $\dsp \nfrac{\fy}{\sqrt{M}}\in L^2(\Omega)$ 
and if $\fy \in H^1_M$ then $\dsp \nfrac{\fy}{\sqrt{M}}\in H^1(\Omega)$.
\end{lemma}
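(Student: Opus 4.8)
The plan is to reduce the statement to the definitions of the weighted norms together with the Hardy-type control of Lemma~\ref{lem-hardy}. For the $L^2$ part there is almost nothing to do: by definition
\begin{equation*}
\|\fy\|_{L^2_M}^2 = \int_\Omega M\,\Big|\frac{\fy}{M}\Big|^2 = \int_\Omega \frac{|\fy|^2}{M} = \Big\|\frac{\fy}{\sqrt M}\Big\|_{L^2(\Omega)}^2,
\end{equation*}
so $\fy\in L^2_M$ is exactly equivalent to $\fy/\sqrt M\in L^2(\Omega)$; and since~$M$ is bounded on~$\Omega$ (it extends continuously to~$\overline\Omega$ with $M=0$ on~$\Gamma$), we get $\int_\Omega|\fy|^2 \leq \|M\|_{L^\infty(\Omega)}\int_\Omega |\fy|^2/M$, which proves $L^2_M\subset L^2(\Omega)$.

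For the $H^1$ part, let $\fy\in H^1_M$ and write $\fy/\sqrt M = \sqrt M\,(\fy/M)$. Differentiating,
\begin{equation*}
\nabla\Big(\frac{\fy}{\sqrt M}\Big) = \sqrt M\,\nabla\Big(\frac{\fy}{M}\Big) + \frac{\nabla M}{2M}\,\frac{\fy}{\sqrt M}.
\end{equation*}
The first term is directly controlled by the $H^1_M$ norm, since $\int_\Omega \big|\sqrt M\,\nabla(\fy/M)\big|^2 = \int_\Omega M\,|\nabla(\fy/M)|^2 \leq \|\fy\|_{H^1_M}^2$. For the second term I would use assumption~\eqref{H2}, which yields $|\nabla M|/(2M)\leq 1/(2c\,\delta_\Gamma)$, so that
\begin{equation*}
\int_\Omega \Big|\frac{\nabla M}{2M}\,\frac{\fy}{\sqrt M}\Big|^2 \leq \frac{1}{4c^2}\int_\Omega \frac{1}{\delta_\Gamma^2}\,M\,\Big|\frac{\fy}{M}\Big|^2 \lesssim \|\fy\|_{H^1_M}^2
\end{equation*}
by Lemma~\ref{lem-hardy}. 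Hence $\nabla(\fy/\sqrt M)\in L^2(\Omega)$, and combined with the $L^2$ part, $\fy/\sqrt M\in H^1(\Omega)$.

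To upgrade this to $\fy\in H^1_0(\Omega)$, I would argue similarly on $\fy = M\,(\fy/M)$: differentiating, $\nabla\fy = M\,\nabla(\fy/M) + \nabla M\,(\fy/M)$, the first term being in $L^2(\Omega)$ because~$M$ is bounded, and the second because, by~\eqref{H2} and Lemma~\ref{lem-hardy},
\begin{equation*}
\int_\Omega\big|\nabla M\,(\fy/M)\big|^2 \leq \frac{1}{c^2}\int_\Omega \frac{1}{\delta_\Gamma^2}\,M\,\Big|\frac{\fy}{M}\Big|^2 \lesssim \|\fy\|_{H^1_M}^2.
\end{equation*}
Thus $\fy\in H^1(\Omega)$, and Lemma~\ref{lem-hardy} also gives $\fy/\delta_\Gamma\in L^2(\Omega)$. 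One then concludes by the classical characterization of $H^1_0$: a function $u\in H^1(\Omega)$ with $u/\delta_\Gamma\in L^2(\Omega)$ belongs to $H^1_0(\Omega)$ (the converse of Hardy's inequality; see~\cite{Triebel} and the references therein).

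The only points that are not pure bookkeeping are: (i) the vanishing of the boundary trace of~$\fy$, which is not proved by hand but deduced from the Hardy bound $\fy/\delta_\Gamma\in L^2(\Omega)$ via the standard $H^1_0$ criterion, and (ii) the absorption of the a priori singular factor $\nabla M/M$ near~$\Gamma$, which is precisely the role of~\eqref{H2}: it trades $\nabla M/M$ against the weight $1/\delta_\Gamma$ that Lemma~\ref{lem-hardy} is built to handle. I expect~(i) to be the main obstacle; everything else is unravelling the definitions of $L^2_M$ and $H^1_M$.
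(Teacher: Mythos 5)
Your proof is correct, and its core is the same as the paper's: the $L^2$ part is the identity $\|\fy\|_{L^2_M}=\|\fy/\sqrt M\|_{L^2(\Omega)}$ plus boundedness of $M$, and the $H^1$ part rests on the decomposition $\nabla(\fy/\sqrt M)=\sqrt M\,\nabla(\fy/M)+\tfrac12\frac{\nabla M}{M}\frac{\fy}{\sqrt M}$ with \eqref{H2} trading $\nabla M/M$ for $1/\delta_\Gamma$ and Lemma~\ref{lem-hardy} absorbing that weight. The only divergence is the final passage to $H^1_0(\Omega)$: the paper argues that $\fy/\sqrt M\in H^1(\Omega)$ has a trace and that multiplying by the regular function $\sqrt M$, which vanishes on $\Gamma$, kills that trace, whereas you compute $\nabla\fy$ directly and invoke the converse Hardy characterization ($u\in H^1(\Omega)$ with $u/\delta_\Gamma\in L^2(\Omega)$ implies $u\in H^1_0(\Omega)$). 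Your route has the small advantage of making explicit the verification that $\fy$ itself lies in $H^1(\Omega)$ (i.e.\ that $\nabla M\cdot(\fy/M)\in L^2(\Omega)$), a point the paper's one-line multiplication argument leaves implicit; the cost is an appeal to an external (though standard, valid on a $\mathcal C^2$ domain) characterization of $H^1_0$ in place of the elementary trace argument. Both conclusions are sound; the minor constant discrepancies in your displayed bounds (a factor $\|M\|_{L^\infty(\Omega)}$ when converting between $|\fy|^2/\delta_\Gamma^2$ and $M|\fy/M|^2/\delta_\Gamma^2$) are harmless under the $\lesssim$ convention.
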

%
\begin{proof} 
Inclusion $L^2_M\subset L^2(\Omega)$ is obvious since on the one hand, by definition of~$L^2_M$, we have $\fy\in L^2_M$ if and only if $\dsp \nfrac{\fy}{\sqrt{M}}\in L^2(\Omega)$, and on the other hand $\sqrt{M}\in L^\infty(\Omega)$.\\
\noindent To prove the inclusion $H^1_M\subset H^1_0(\Omega)$ we use the Lemma~\ref{lem-hardy} with additional assumption~\eqref{H2}: if $\fy\in H^1_M$ then we have
\begin{equation*}
\nabla \Big( \frac{\fy}{\sqrt{M}} \Big) 
= \nabla \Big( \frac{\fy}{M}\sqrt{M} \Big) 
=
\underbrace{\sqrt{M}\nabla \Big( \frac{\fy}{M} \Big)}_{L^2(\Omega)} 
+
\underbrace{\frac{1}{2}\frac{\nabla M}{M}\frac{\fy}{\sqrt{M}}}_{L^2(\Omega)} \in L^2(\Omega).
\end{equation*}
Consequently $\nfrac{\fy}{\sqrt{M}}\in H^1(\Omega)$. Hence this function $\nfrac{\fy}{\sqrt{M}}$ has a trace on the boundary $\Gamma$. Since $M$ is a regular function on $\Omega$ vanishing on $\Gamma$, we deduce that $\fy = \frac{\fy}{\sqrt{M}} \sqrt{M}\in H^1_0(\Omega)$.
\end{proof}
%
\noindent This next lemma is interesting in themselves for understanding the space~$H^1_M$ better. Moreover, it will be used in the proof of the Lemma~\ref{lem-test}.
\begin{lemma}[Density]\label{lem-density}
If~$M$ satisfies~\eqref{H1} and~\eqref{H2} then we have the following equality
\begin{equation*}
\overline{\mathcal C^\infty_0}^{H^1_M} = H^1_M.
\end{equation*}
\end{lemma}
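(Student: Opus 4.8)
The goal is to show that $\mathcal C^\infty_0(\Omega)$ is dense in $H^1_M$. The natural strategy is to transfer the problem to the weighted Sobolev space $H^1(M\,dx)$ via the isometry $\fy \mapsto u := \fy/M$, which identifies $H^1_M$ with $H^1(M\,dx)$ and sends $\mathcal C^\infty_0(\Omega)$ onto itself (since $M$ is smooth and strictly positive inside $\Omega$, multiplication by $M$ or $1/M$ preserves $\mathcal C^\infty_0$). Thus it suffices to show that $\mathcal C^\infty_0(\Omega)$ is dense in $H^1(M\,dx)$. The weight $M$ is smooth and positive in the interior, so by a standard Friedrichs mollification argument smooth functions in $H^1(M\,dx)$ are dense; the real content is the \emph{approximation near the boundary}, i.e. that any $u\in H^1(M\,dx)$ can be approximated by functions vanishing in a neighborhood of $\Gamma$. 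This is where the vanishing of $M$ on $\Gamma$ (and the additional assumptions \eqref{H1}, \eqref{H2}) must enter.

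Concretely, I would take the cutoff functions $\gamma_\eps$ already introduced in the proof of Lemma~\ref{lem-hardy} (equal to $1$ on $\mathcal O_{\eps/2}$, supported in $\mathcal O_\eps$, with $|\nabla \gamma_\eps|\lesssim 1/\eps$, and — choosing them carefully as functions of $\delta_\Gamma$ only — one may even arrange $|\nabla\gamma_\eps|\lesssim 1/\delta_\Gamma$ on the support of $\nabla\gamma_\eps$). Given $\fy\in H^1_M$, set $\fy_\eps := (1-\gamma_\eps)\fy$; this function vanishes near $\Gamma$, hence $\fy_\eps/M$ lies in $H^1(\Omega)$ with support compactly contained in $\Omega$, and a further mollification puts it in $\mathcal C^\infty_0(\Omega)$. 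It remains to check $\fy_\eps \to \fy$ in $H^1_M$ as $\eps\to 0$. Writing $\fy - \fy_\eps = \gamma_\eps \fy$ and using $\nabla(\gamma_\eps\fy/M) = \gamma_\eps\nabla(\fy/M) + (\nabla\gamma_\eps)(\fy/M)$, the squared norm $\|\gamma_\eps\fy\|_{H^1_M}^2$ is controlled by
\begin{equation*}
\int_{\mathcal O_\eps} M\Big|\frac{\fy}{M}\Big|^2 + \int_{\mathcal O_\eps} M\Big|\nabla\Big(\frac{\fy}{M}\Big)\Big|^2 + \int_{\mathcal O_\eps} |\nabla\gamma_\eps|^2\, M\Big|\frac{\fy}{M}\Big|^2.
\end{equation*}
The first two terms are tails of the convergent integrals defining $\|\fy\|_{H^1_M}^2$, so they tend to $0$ by dominated convergence. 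For the third term, using $|\nabla\gamma_\eps|^2\lesssim 1/\delta_\Gamma^2$ on $\mathcal O_\eps$, it is bounded by $\int_{\mathcal O_\eps}\frac{1}{\delta_\Gamma^2}M|\fy/M|^2$, which is the tail of the integral appearing in Lemma~\ref{lem-hardy}; since that integral is \emph{finite} (this is exactly the content of Lemma~\ref{lem-hardy}, which requires \eqref{H1}, and implicitly \eqref{H2} is what makes the statement usable), its tail over $\mathcal O_\eps$ also tends to $0$ as $\eps\to 0$. Hence $\fy_\eps\to\fy$ in $H^1_M$, and combining with interior mollification gives an approximating sequence in $\mathcal C^\infty_0(\Omega)$.

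The main obstacle — and the reason the hypotheses \eqref{H1}, \eqref{H2} are needed — is precisely the control of the commutator term $(\nabla\gamma_\eps)(\fy/M)$: because $M$ degenerates at $\Gamma$, one cannot simply bound $\nabla\gamma_\eps$ by a constant times $1/\eps$ and use an $L^2$ tail; one genuinely needs the Hardy-type inequality of Lemma~\ref{lem-hardy} to know that $\frac{1}{\delta_\Gamma}\frac{\fy}{\sqrt M}\in L^2(\Omega)$, so that the boundary-layer contribution is an honest tail of a convergent integral. A minor technical point is to verify that the cutoffs $\gamma_\eps$ can indeed be chosen with $|\nabla\gamma_\eps|\lesssim 1/\delta_\Gamma$ uniformly (e.g. $\gamma_\eps = \chi(\delta_\Gamma/\eps)$ with $\chi$ smooth, giving $|\nabla\gamma_\eps|\lesssim 1/\eps \lesssim 1/\delta_\Gamma$ on $\{\eps/2<\delta_\Gamma<\eps\}$), and to recall that $\delta_\Gamma$ is a regular function on $\overline\Omega$ as noted at the end of Subsection~\ref{subsection_geometry}; once this is in place the argument closes without difficulty.
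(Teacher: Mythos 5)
Your proof is correct and follows essentially the same strategy as the paper's: truncate $\fy$ near the boundary, reduce the only nontrivial term (the one involving the gradient of the cutoff) to the Hardy-type inequality of Lemma~\ref{lem-hardy}, conclude by a tail/dominated-convergence argument, and finish with interior mollification. The only (harmless) difference is that you cut off on level sets of $\delta_\Gamma$ and invoke Lemma~\ref{lem-hardy} directly, whereas the paper cuts off on level sets of $M$ via $\chi\big(\frac{1}{nM}\big)$ and therefore needs~\eqref{H2} to convert the resulting factor $\big|\frac{\nabla M}{M}\big|$ into $\frac{1}{\delta_\Gamma}$ before applying the same Hardy inequality.
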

%
\begin{proof}
Let $\fy\in H^1_M$ and define, for $n\in \N^*$, the function $\fy_n$ by
\begin{equation*}
\fy_n = \fy \, \chi \Big( \frac{1}{nM} \Big) \quad \text{where} \quad
\chi(t) 
= \left\{
\begin{aligned}
1 \qquad &\text{if $0\leq t<1$},\\
2-t \quad &\text{if $1\leq t< 2$},\\
0 \qquad &\text{if $t\geq 2$}.
\end{aligned}
\right.
\end{equation*}
We successively prove that 
\begin{enumerate}
\item the functions $\fy_n$ are in $H^1_M$,
\item we can approach these functions $\fy_n$ with $C^\infty_0$ functions,
\item the sequence $\{\fy_n\}_{n\in \N^*}$ converges to $\fy$ in $H^1_M$ sense.
\end{enumerate}
These three points clearly implicate the lemma.\\[0.2cm]
$\bullet$ $(1)$ By definition of $\chi$, we have for all $n\in \N^*$ the relation $|\fy_n|\leq |\fy|$ hence $\int_\Omega M\big| \frac{\fy_n}{M} \big|^2 \leq \int_\Omega M\big| \frac{\fy}{M} \big|^2$ which is bounded since $\fy\in H^1_M\subset L^2_M$. To control the gradient part of the $H^1_M$-norm of $\fy_n$, $n\in \N^*$, we write
\begin{equation*}
\begin{aligned}
\Big| \nabla \Big( \frac{\fy_n}{M} \Big) \Big| 
& = \Big| \nabla \Big( \frac{\fy}{M} \Big) \chi \Big( \frac{1}{nM} \Big) 
 + \frac{\fy}{M} \nabla \Big( \chi \Big( \frac{1}{nM} \Big)\Big) \Big|\\
& \leq \Big| \nabla \Big( \frac{\fy}{M} \Big) \Big| + \Big| \frac{\fy}{M} \nabla \Big( \chi \Big( \frac{1}{nM} \Big)\Big) \Big|.
\end{aligned}
\end{equation*}
By definition of the trucature function~$\chi$, the last term is not egal to $0$ if and only if $\frac{1}{2n} \leq M < \frac{1}{n}$. In this case it write $\big| \frac{1}{n} \frac{\fy}{M} \frac{\nabla M}{M^2} \big|$ and can be controlled by $2 \big| \frac{\fy}{M} \frac{\nabla M}{M} \big|$. We have
\begin{equation*}
\Big| \nabla \Big( \frac{\fy_n}{M} \Big) \Big|^2
\leq
\Big| \nabla \Big( \frac{\fy}{M} \Big) \Big|^2
+
4 \Big| \frac{\fy}{M} \frac{\nabla M}{M} \Big|^2.
\end{equation*}
Since $\fy\in H^1_M$ and using the Lemma~\ref{lem-hardy} together with the assumption~\eqref{H2}, we deduce that $\int_\Omega M \big| \nabla \big( \frac{\fy_n}{M} \big) \big|^2$ is bounded. Consequently, $\fy_n\in H^1_M$.\\[0.2cm]
$\bullet$ $(2)$ For each $n\in \N^*$ the function $\fy_n\in H^1_M\subset H^1$ is egal to~$0$ in a neighborhood of the boundary $\Gamma$.
Approaching $\fy_n$ by a sequence $\{\fy_{n,m}\}_{m\in\N}$ (which is egal to~$0$ on a neighborhood of~$\Gamma$) in~$H^1_0$ allows to approach~$\fy_n$ by the same sequence $\{\fy_{n,m}\}_{m\in\N}$ in~$H^1_M$.\\[0.2cm]
$\bullet$ $(3)$ Note that $\fy(\x)-\fy_n(\x)$ egals to $0$ if $M(\x)>\frac{1}{n}$. In the other case, that is for all $\x\in \Omega$ such that $M(\x)\leq \frac{1}{n}$, we get $|\fy_n(\x)-\fy(\x)|\leq |\fy(\x)|$ since $0\leq \chi\leq 1$. Hence
\begin{equation*}
\int_\Omega M\Big| \frac{\fy_n-\fy}{M} \Big|^2 \leq \int_{\x\in \Omega\,;\, M(\x)\leq \frac{1}{n}} M\Big| \frac{\fy}{M} \Big|^2.
\end{equation*}
Since $\fy\in H^1_M\subset L^2_M$ and since the measure of the set $\{\x\in \Omega\,;\, M(\x)\leq \frac{1}{n}\}$ tends to $0$ when $n$ tends to $+\infty$, we have
\begin{equation}\label{estim1:fyn-fy}
\int_\Omega M\Big| \frac{\fy_n-\fy}{M} \Big|^2 \; \xrightarrow{n\rightarrow +\infty} \; 0.
\end{equation}
Concerning the gradient part, we write
\begin{equation*}
\nabla \Big( \frac{\fy_n-\fy}{M} \Big) 
= \nabla \Big( \frac{\fy}{M} \Big) \Big( \chi\Big( \frac{1}{nM} \Big) - 1 \Big) 
+ \frac{\fy}{M} \nabla \Big( \chi\Big( \frac{1}{nM} \Big) - 1 \Big).
\end{equation*}
The first term is non zero if $M\leq \frac{1}{n}$ and is bounded by $\big| \nabla \big( \frac{\fy}{M} \big) \big|$ in this case. The last term is non zero if $\frac{1}{2n} \leq M < \frac{1}{n}$ and is bounded by $2\big| \frac{\nabla M}{M}\frac{\fy}{M} \big|$. Hence we have
\begin{equation*}
\int_\Omega M\Big|\nabla \Big( \frac{\fy_n-\fy}{M} \Big) \Big|^2
\leq
\int_{\x\in \Omega\,;\, M(\x)\leq \frac{1}{n}} M\Big|\nabla \Big( \frac{\fy}{M} \Big) \Big|^2  
+
4 \Big| \frac{\nabla M}{M}\frac{\fy}{\sqrt{M}} \Big|^2.
\end{equation*}
Since $\fy\in H^1_M$, the first term is bounded, and using the result of the Lemma~\ref{lem-hardy} again, we know that the last term is bounded too. More precisely, as previously, using the fact that the measure of the set $\{\x\in \Omega\,;\, M(\x)\leq \frac{1}{n}\}$ tends to $0$ when $n$ tends to $+\infty$, we have
\begin{equation}\label{estim2:fyn-fy}
\int_\Omega M\Big|\nabla \Big( \frac{\fy_n-\fy}{M} \Big) \Big|^2 \; \xrightarrow{n\rightarrow +\infty} \; 0.
\end{equation}
The relations $\eqref{estim1:fyn-fy}$ and $\eqref{estim2:fyn-fy}$ ensure that the sequence $\{\fy_n\}_{n\in \N^*}$ converges to $\fy$ in $H^1_M$.
\end{proof}
%
\noindent Now we introduce a compacity result for the spaces $L^2_M$ and $H^1_M$ which is comparable to the classical compact injection $H^1_0(\Omega)\hookrightarrow L^2(\Omega)$. 
%
\begin{lemma}[Compacity]\label{lem-compacity}
The injection $H^1_M\hookrightarrow L^2_M$ is compact.
\end{lemma}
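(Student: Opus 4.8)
The plan is to reduce the claim, by a simple change of unknown, to the classical Rellich-type compactness theorem in a genuine weighted Sobolev space. The key remark is that, although $L^2_M$ and $H^1_M$ are \emph{not} themselves weighted Sobolev spaces in the traditional sense (as emphasised above), the linear map $T:\fy\mapsto \fy/M$ is, straight from the definition of the norms, an isometric isomorphism of $L^2_M$ onto $L^2(M\,dx)$ and, simultaneously, of $H^1_M$ onto $H^1(M\,dx)$; its inverse is $v\mapsto Mv$. Moreover $T$ intertwines the two natural inclusions (both compositions send $\fy\in H^1_M$ to $\fy/M\in L^2(M\,dx)$), so that the inclusion $H^1_M\hookrightarrow L^2_M$ is compact if and only if the inclusion $H^1(M\,dx)\hookrightarrow L^2(M\,dx)$ is. The latter compactness, for the finite measure $M\,dx$ with $M$ smooth and positive inside $\Omega$, is a standard fact of the theory of weighted Sobolev spaces (see~\cite{Triebel,stredulinsky,turesson}), and invoking it finishes the proof.

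If one prefers an argument that does not quote the weighted Rellich theorem as a black box, I would proceed as follows. Let $\{\fy_n\}$ be bounded in $H^1_M$ and set $u_n=\fy_n/M$, so that $\int_\Omega M\big(|u_n|^2+|\nabla u_n|^2\big)\le C$. On each interior subdomain $\Omega_\eps$ (with $\eps\le\eps_\Omega$) the function $M$ is bounded below by a positive constant, hence $\{u_n\}$ is bounded in the \emph{unweighted} space $H^1(\Omega_\eps)$; by the Rellich-Kondrachov theorem and a diagonal extraction along $\eps=1/k$ we obtain a subsequence, still denoted $\{u_n\}$, converging in $L^2_{\mathrm{loc}}(\Omega)$ to some $u$, with $u\in L^2(M\,dx)$ by Fatou and $u_n\to u$ in $L^2_M$ over each $\Omega_\eps$ (since $M\in L^\infty$). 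It then remains to control the boundary-layer tail uniformly in $n$: since $\delta_\Gamma\le\eps$ on $\mathcal O_\eps$,
\begin{equation*}
\int_{\mathcal O_\eps} M\,|u_n|^2
\le \eps^2\int_{\mathcal O_\eps}\frac{M}{\delta_\Gamma^2}\,|u_n|^2
\le \eps^2\int_{\Omega}\frac{M}{\delta_\Gamma^2}\,\Big|\frac{\fy_n}{M}\Big|^2
\lesssim \eps^2\,\|\fy_n\|_{H^1_M}^2,
\end{equation*}
by the Hardy-type inequality of Lemma~\ref{lem-hardy}, while $\int_{\mathcal O_\eps}M\,|u|^2\to0$ as $\eps\to0$ by dominated convergence. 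Combining the two regimes — letting $n\to\infty$ first and then $\eps\to0$ — yields $\fy_n\to\fy:=Mu$ in $L^2_M$.

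The only point that is not bookkeeping is the reduction itself: one must resist treating $H^1_M$ and $L^2_M$ as ordinary weighted spaces and instead notice that $T$ transports the whole question to the classical pair $\big(L^2(M\,dx),H^1(M\,dx)\big)$. In the self-contained variant the single delicate estimate is the uniform smallness of the tail $\int_{\mathcal O_\eps}M\,|u_n|^2$, which is exactly what Lemma~\ref{lem-hardy} — hence assumption~\eqref{H1} — is designed to provide; the interior part is nothing but the usual Rellich theorem applied away from the boundary.
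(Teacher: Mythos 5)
Your first paragraph is essentially the paper's own argument: the paper also transports the problem through the isomorphism $\fy\mapsto\fy/M$ and reduces to the compactness of $H^1(M\,dx)\hookrightarrow L^2(M\,dx)$, which it justifies by a result of M\'etivier valid when $M>0$ in $\Omega$, $M=0$ on $\Gamma$ and $\nabla M\neq 0$ near $\Gamma$. One caution on that step: the compact embedding of the weighted pair is \emph{not} a ``standard fact'' for an arbitrary smooth weight that is merely positive inside $\Omega$ --- the precise behaviour of $M$ near $\Gamma$ matters --- so you should cite a statement with explicit hypotheses (as the paper does with M\'etivier) rather than a generic reference. Your self-contained variant is correct and is a genuinely different route: interior Rellich--Kondrachov on the $\Omega_\eps$ with a diagonal extraction, plus a boundary-layer tail estimate $\int_{\mathcal O_\eps}M|u_n|^2\lesssim \eps^2\|\fy_n\|_{H^1_M}^2$ obtained from the Hardy-type inequality. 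What it buys is independence from the weighted Rellich theorem; what it costs is assumption~\eqref{H1}, which enters through Lemma~\ref{lem-hardy}, whereas the lemma as stated (and the paper's proof) needs only the standing properties of the maxwellian ($M>0$ in $\Omega$, $M=0$ on $\Gamma$). Since \eqref{H1} is assumed throughout the main theorem this is harmless in context, but it does make your version of the compactness lemma strictly less general as a standalone statement.
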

\begin{proof} To prove this lemma, we use the following result due to G. Metivier~\cite[Proposition~3.1 p.~221]{Metivier} affirming that the weight Sobolev space injection $H^1(M dx)\subset L^2(M dx)$ as soon as we have~$M>0$ on~$\Omega$, $M=0$ on~$\Gamma$ and $\nabla M \neq 0$ out of~$\Gamma$. Notice that this last point is a consequence of the two first points~$M>0$ on~$\Omega$ and $M=0$ on~$\Gamma$, at least in a neighborooh of the boundary $\Gamma$, what is sufficient for our results. For sake of simplicity, we will nevertheless work with~$\Omega$.\\
Consider a sequence $\{\fy_n\}_{n\in \N}$ bounded in $H^1_M$ and show that a convergent sub-sequence can be extracted. By definition of $H^1_M$, for all $n\in \N$, there exists $g_n\in H^1(M dx)$ such that $\fy_n=Mg_n$. The sequence $\{\fy_n\}_{n\in\N}$ being bounded in $H^1_M$, the sequence $\{g_n\}_{n\in \N}$ is bounded in $L^2_M$. We use here the result of G. Métivier. We can extract from the sequence $\{g_n\}_{n\in\N}$ a sub-sequence, still noted $\{g_n\}_{n\in\N}$ and such that
\begin{equation*}
g_n \rightharpoonup g \quad \text{in $H^1(M dx)$} \qquad \text{and}\qquad g_n \rightarrow g \quad \text{in $L^2(M dx)$}.
\end{equation*}
By definition of the spaces $H^1_M$ and $L^2_M$ we conclude that
\begin{equation*}
\fy_n \rightharpoonup Mg \quad \text{in $H^1_M$} \qquad \text{and}\qquad \fy_n \rightarrow Mg \quad \text{in $L^2_M$,}
\end{equation*}
which proves that the injection $H^1_M \hookrightarrow L^2_M$ is compact.
\end{proof}
\noindent In the same way, we present the next lemma which proves that functions in~$H^1_M$ are in certain~$L^p_M$, $p>2$, where the weighted-space~$L^p_M$ is defined by
\begin{equation*}
L^p_M = \Big\{ \fy\in  L^1_{loc}(\Omega) \ ;\ \Big( \int_\Omega M\Big|\frac{\fy}{M} \Big|^{p}\, \Big)^{1/p} < +\infty \Big\}
\end{equation*}
and endowed with its usual norm.\\
This kind of result is essential for the proof of the main theorem (Theorem~\ref{the-main}); it is proved under the assumptions~\eqref{H1} and~\eqref{H2}. We will note that assumption~\eqref{H2} is used in the following weak formulation (obtained by integration):
\begin{equation*}
  \eqref{H2} \quad \Longrightarrow \quad \exists C'>0 \quad  M\geq C'\, \delta_\Gamma^{\nfrac{1}{c}}.
\end{equation*}
More exactly, we have
\begin{lemma}[Sobolev-type injection]\label{lem-injection}
If~$M$ satisfies~\eqref{H1} and~\eqref{H2} then there exists $p>2$ such that the injection $H^1_M \hookrightarrow L^p_M$ is continuous.
\end{lemma}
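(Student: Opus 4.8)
The plan is to transfer the statement to the ordinary function $u:=\fy/\sqrt M$ and then invoke the classical Sobolev embedding for $H^1(\Omega)$. By Lemma~\ref{lem-inclusion}, if $\fy\in H^1_M$ then $u=\fy/\sqrt M\in H^1(\Omega)$, with $\|u\|_{H^1(\Omega)}\lesssim \|\fy\|_{H^1_M}$ (this estimate is exactly what the proof of Lemma~\ref{lem-inclusion} produces, using Lemma~\ref{lem-hardy} and~\eqref{H2} to bound the term $\tfrac12\tfrac{\nabla M}{M}\tfrac{\fy}{\sqrt M}$ in $L^2$). The classical Sobolev embedding then gives $u\in L^{q}(\Omega)$ for some $q>2$ — one may take $q=2d/(d-2)$ for $d\geq3$ and any $q<\infty$ for $d=2$ — with $\|u\|_{L^q(\Omega)}\lesssim\|u\|_{H^1(\Omega)}\lesssim\|\fy\|_{H^1_M}$.

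It remains to convert the bound on $\|u\|_{L^q(\Omega)}$ into a bound on $\|\fy\|_{L^p_M}$ for some $p>2$. Observe that
\begin{equation*}
\int_\Omega M\Big|\frac{\fy}{M}\Big|^{p}
=\int_\Omega M^{1-\frac p2}\,|u|^{p}
=\int_\Omega M^{1-\frac p2}\,|u|^{p-q}\,|u|^{q}.
\end{equation*}
The idea is to absorb the bad factor $M^{1-p/2}$ (which blows up near $\Gamma$ since $M=0$ there) using the weak form of~\eqref{H2} recalled just before the lemma, namely $M\geq C'\delta_\Gamma^{1/c}$. Thus $M^{1-p/2}\lesssim \delta_\Gamma^{(1-p/2)/c}$ near the boundary, and this negative power of $\delta_\Gamma$ is integrable (indeed in any $L^s$ for $\delta_\Gamma$-weights of small enough exponent) provided $p$ is chosen sufficiently close to $2$. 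Concretely, pick $p\in(2,q)$; write the integral with Hölder's inequality, splitting $|u|^{p-q}|u|^q$... no, rather: since $u\in L^q$ and also $u\in H^1\subset L^2$, interpolate to get $u\in L^r$ for every $r\in[2,q]$. Then apply Hölder to $\int_\Omega M^{1-p/2}|u|^p$ with exponents chosen so that $|u|^p$ lands in some $L^{r/p}$ with $r\leq q$ and $M^{1-p/2}$ lands in the conjugate Lebesgue space; using $M\gtrsim\delta_\Gamma^{1/c}$ and the integrability of $\delta_\Gamma^{-\sigma}$ over a bounded Lipschitz domain for $\sigma<1$, one checks that for $p$ close enough to $2$ the exponent $\sigma=(p/2-1)\cdot(\text{conjugate index})/c$ stays $<1$, so the $M$-factor integral is finite. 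This yields $\|\fy\|_{L^p_M}\lesssim\|u\|_{L^r(\Omega)}\lesssim\|\fy\|_{H^1_M}$, which is the claimed continuous injection.

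The main obstacle — really the only delicate point — is the bookkeeping of exponents: one must verify that there genuinely exists $p>2$ for which the three constraints are simultaneously met, i.e. $p<q$ (so interpolation keeps $u$ in the relevant $L^r$), the Hölder conjugate pairing is admissible, and the resulting power of $\delta_\Gamma$ coming from $M^{1-p/2}$ is $>-1$ so that it is integrable on $\Omega$. Since all these constraints are strict inequalities that hold in the limit $p\to2^+$ (where the $M$-weight exponent $1-p/2\to0$, making $M^{1-p/2}\to1$), a continuity/openness argument guarantees a nonempty range of admissible $p$. One should also note that only the behaviour near $\Gamma$ matters: away from $\Gamma$, $M$ is bounded below by a positive constant, so $M^{1-p/2}$ is harmless there and the estimate reduces trivially to the classical Sobolev inequality.
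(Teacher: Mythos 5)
Your argument is correct, and its first half coincides with the paper's: both reduce to $u=\fy/\sqrt M$, show $u\in H^1(\Omega)$ with $\|u\|_{H^1(\Omega)}\lesssim\|\fy\|_{H^1_M}$ via Lemma~\ref{lem-hardy} and \eqref{H2}, and invoke the classical Sobolev embedding to get $u\in L^q(\Omega)$ for some $q>2$. You diverge in how you absorb the extra negative power of $M$ in $\|\fy\|_{L^p_M}^p=\int_\Omega M^{1-p/2}|u|^p$. The paper applies the Hardy inequality a second time to produce the stronger weighted bound $\fy/(\sqrt M\,M^{c})\in L^2(\Omega)$ (written as $\frac{1}{\delta_\Gamma}\frac{\fy}{\sqrt M}\cdot\frac{\delta_\Gamma}{M^{c}}$) and then interpolates pointwise, $\frac{\fy}{\sqrt M\,M^{\beta}}=\big(\frac{\fy}{\sqrt M\,M^{c}}\big)^{\beta/c}\big(\frac{\fy}{\sqrt M}\big)^{1-\beta/c}$ for $0\le\beta\le c$. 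You instead use only the integrated form of \eqref{H2}, namely $M\gtrsim\delta_\Gamma^{1/c}$, together with H\"older and the integrability of $\delta_\Gamma^{-\sigma}$ on $\Omega$ for $\sigma<1$ (which indeed follows from the tubular-neighbourhood formula \eqref{change_of_variables_approx}, available since $\Gamma$ is $\mathcal C^2$). Your exponent bookkeeping closes: taking $s=q/p$ and $s'=q/(q-p)$, the constraint is $(p/2-1)\,q/(c(q-p))<1$, i.e. $p<2+\frac{2c(q-2)}{q+2c}$, a nonempty range above $2$. Your route is more elementary (Hardy is used only once, and no second weighted $L^2$ bound is needed), at the price of a strictly smaller admissible range of $p$ than the paper's $p\le 2+\frac{4c(q-2)}{4c+q-2}$ — the loss comes from estimating the singular factor $M^{1-p/2}$ by a crude integrability bound rather than pairing it with the Hardy-controlled quantity. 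This is irrelevant for the lemma as stated, but it slightly weakens the explicit range recorded in the remark following the lemma.
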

\begin{proof}
First, let us note that $\fy \in L^p_M$ if and only if $\dsp \frac{\fy}{M^{1-1/p}} \in L^p(\Omega)$ where the spaces $L^p(\Omega)$ are the classical Sobolev spaces on the set~$\Omega$. Let $\fy\in H^1_M$. In the next three steps we will prove that there exists $p>2$ such that $\dsp \frac{\fy}{M^{1-1/p}} \in L^p(\Omega)$.\\
$\bullet$ Since $\fy\in H^1_M$ we clearly have $\fy\in L^2_M$ and so $\nfrac{\fy}{\sqrt{M}}\in L^2$. Moreover, using assumptions Lemma~\ref{lem-hardy} with assumption~\eqref{H2} we obtain
\begin{equation*}
  \nabla \left( \frac{\fy}{\sqrt{M}} \right) 
= 
\underbrace{\frac{\nabla \fy}{\sqrt{M}}}_{L^2(\Omega)}
+
\underbrace{\frac{\nabla M}{M}\frac{\fy}{\sqrt{M}}}_{L^2(\Omega)}
\in L^2(\Omega).
\end{equation*}
Consequently, $\dsp \nfrac{\fy}{\sqrt{M}}\in H^1(\Omega)$ and using the classical Sobolev injections, we deduce that $\dsp \nfrac{\fy}{\sqrt{M}}\in L^q(\Omega)$ for all $\dsp q\leq 2d/(d-2)$ (and for $q< +\infty$ in the $2$-dimensional case).\\
$\bullet$ Using the assumption~\eqref{H2} and the Hardy inequality (Lemma~\ref{lem-hardy}) we obtain
\begin{equation*}
\frac{\fy}{\sqrt{M}M^c} = \underbrace{ \frac{1}{\delta_\Gamma} \frac{\fy}{\sqrt{M}} }_{L^2(\Omega)} \times \underbrace{ \frac{\delta_\Gamma}{M^c} }_{L^\infty(\Omega)} \in L^2(\Omega).
\end{equation*}
$\bullet$ From the two previous steps, we can write that for any $\beta\in \R$ such that $0\leq \beta\leq c$ we obtain
\begin{equation*}
\frac{\fy}{\sqrt{M}M^\beta} 
=
\underbrace{ \Big( \frac{\fy}{\sqrt{M}M^c} \Big)^{\beta/c} }_{L^{2c/\beta}(\Omega)} \times \underbrace{ \Big( \frac{\fy}{\sqrt{M}} \Big)^{1-\beta/c} }_{L^{qc/(c-\beta)}(\Omega)} \in L^r(\Omega)
\end{equation*}
with $r = \frac{2c q}{q\beta + 2(c-\beta)}$. Let us note $p$ the real such that $1-\frac{1}{p} = \beta+\frac{1}{2}$. The previous result is written: for any $p\in \R$ such that $2\leq p \leq \frac{2}{1-2c}$ we obtain
\begin{equation*}
\frac{\fy}{M^{1-1/p}} \in L^r
\qquad
\text{with \quad $\dsp r = \frac{4c p q}{qp - 2q + 4c p - 2p + 4}$.}
\end{equation*}
In particular, we have $\dsp \frac{\fy}{M^{1-1/p}} \in L^p$ as soon as $r\geq p$. The inequality $r\geq p$ holds if and only if $p\leq 2 + \frac{4c (q-2)}{4c + q - 2}$. 
It is thus possible to find $p>2$ such that $\dsp \frac{\fy}{M^{1-1/p}} \in L^p$.
\end{proof}
\begin{rema}
According to the previous proof, we obtain the inclusion $H^1_M\subset L^p_M$ for all $p\leq 2+\frac{4c}{c(d-2)+1}$. For instance, in the two dimensional case ($d=2$) the inclusion $H^1_M\subset L^p_M$ holds for all $p\leq 2+4c$.
\end{rema}
\noindent To build functions in $H^1_{M,0}$ we use the following lemma which will be important to obtain a lot of test functions in the weak formulation later. The hypothesis~\eqref{H1} allows to use the Lemma~\ref{lem-density}.
\begin{lemma}\label{lem-test}
Assume~\eqref{H1},~\eqref{H2} hold. Let $\psi\in H^1_M$ and $\xi:\R\rightarrow \R$ be a continuous application, piecewise-$\mathcal C^1$ such that~$\xi'$ is bounded on~$\R$. Then we have 
\begin{equation*}
\fy := M\xi \Big( \frac{\psi}{M}\Big) - M \int_\Omega M\xi \Big( \frac{\psi}{M}\Big) \in H^1_{M,0},
\end{equation*}
with $\dsp \nabla \Big( \frac{\fy}{M}\Big) = \xi'\Big( \frac{\psi}{M}\Big)\nabla \Big( \frac{\psi}{M}\Big)$ and $\dsp \|\fy\|_{H^1_{M,0}} \leq \|\xi'\|_{L^\infty(\R)} \|\psi\|_{H^1_{M,0}}$.
\end{lemma}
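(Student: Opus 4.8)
The plan is to show the three claimed assertions in turn: (i) the explicit formula for $\nabla(\fy/M)$, (ii) membership of $\fy$ in $H^1_{M,0}$, and (iii) the seminorm bound. I would start from the key identity $\fy/M = \xi(\psi/M) - \int_\Omega M\xi(\psi/M)$, so that $\fy/M$ differs from $\xi(\psi/M)$ only by a constant; hence $\nabla(\fy/M) = \nabla\big(\xi(\psi/M)\big)$ and it suffices to compute the gradient of the composition $\xi\circ(\psi/M)$.

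\textbf{Density and chain rule.} The main technical point is to justify the chain rule $\nabla\big(\xi(\psi/M)\big) = \xi'(\psi/M)\,\nabla(\psi/M)$ in the weighted setting, for $\xi$ only piecewise-$\mathcal C^1$ with bounded derivative. Here is where I would invoke Lemma~\ref{lem-density}: under \eqref{H1} and \eqref{H2}, $\mathcal C^\infty_0$ is dense in $H^1_M$, so pick a sequence $\psi_k\in\mathcal C^\infty_0$ with $\psi_k\to\psi$ in $H^1_M$, i.e. $\psi_k/M\to\psi/M$ in $H^1(Mdx)$. For smooth $\psi_k/M$ the classical chain rule (or the Stampacchia-type chain rule for Lipschitz $\xi$ applied to Sobolev functions) gives $\nabla\big(\xi(\psi_k/M)\big) = \xi'(\psi_k/M)\nabla(\psi_k/M)$ pointwise a.e., with $|\xi'|\le\|\xi'\|_{L^\infty}$. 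Passing to the limit: the right-hand side is controlled in $L^2(Mdx)$ by $\|\xi'\|_{L^\infty}\|\nabla(\psi_k/M)\|_{L^2(Mdx)}$, which stays bounded; after extracting a weakly convergent subsequence and identifying the limit (using that $\psi_k/M\to\psi/M$ strongly in $L^2(Mdx)$, hence a.e. along a subsequence, and that $\xi'$ composed with these converges a.e. at continuity points of $\xi'$), one gets that $\xi(\psi/M)\in H^1(Mdx)$ with the asserted gradient. The delicate part is that $\xi'$ may be discontinuous; this is handled by the standard observation that $\nabla(\psi/M)=0$ a.e. on the (at most countable union of) level sets where the a.e.-convergence of $\xi'(\psi_k/M)$ could fail, so the product $\xi'(\psi/M)\nabla(\psi/M)$ is still well-defined a.e. and is the correct weak gradient.

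\textbf{Conclusion.} Once the chain rule is established, $\xi(\psi/M)\in H^1(Mdx)$; since $M$ is a probability density, the constant $\int_\Omega M\xi(\psi/M)$ is finite (bounded by $\|\xi(\psi/M)\|_{L^1(Mdx)}\le \|\xi(\psi/M)\|_{L^2(Mdx)}$), so subtracting it keeps us in $H^1(Mdx)$, hence $\fy\in H^1_M$. By construction $\int_\Omega\fy = \int_\Omega M\xi(\psi/M) - \big(\int_\Omega M\big)\int_\Omega M\xi(\psi/M) = 0$ since $\int_\Omega M = 1$, so $\fy\in H^1_{M,0}$. Finally, from $\nabla(\fy/M)=\xi'(\psi/M)\nabla(\psi/M)$ and $|\xi'|\le\|\xi'\|_{L^\infty(\R)}$,
\begin{equation*}
\|\fy\|_{H^1_{M,0}}^2 = \int_\Omega M\Big|\xi'\Big(\tfrac{\psi}{M}\Big)\nabla\Big(\tfrac{\psi}{M}\Big)\Big|^2 \le \|\xi'\|_{L^\infty(\R)}^2\int_\Omega M\Big|\nabla\Big(\tfrac{\psi}{M}\Big)\Big|^2 = \|\xi'\|_{L^\infty(\R)}^2\,\|\psi\|_{H^1_{M,0}}^2,
\end{equation*}
which is the desired estimate. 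I expect the chain rule / passage-to-the-limit step with a merely piecewise-$\mathcal C^1$ truncation $\xi$ to be the only real obstacle; everything else is bookkeeping with the definitions of the weighted spaces and the normalization $\int_\Omega M = 1$.
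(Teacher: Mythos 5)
Your argument is correct and reaches the same conclusion, but it handles the key step --- the chain rule $\nabla\bigl(\xi(\psi/M)\bigr)=\xi'(\psi/M)\nabla(\psi/M)$ for a merely piecewise-$\mathcal C^1$ truncation --- by a different route. The paper simply observes that Stampacchia's lemma is a \emph{local} statement and applies it directly to $g=\psi/M\in H^1_{\mathrm{loc}}(\Omega)$ (legitimate because $M$ is smooth and strictly positive in the interior), which yields the formula in $\mathcal D'(\Omega)$ in one line; the $H^1_M$ membership and the seminorm bound then follow from the pointwise bound $|\xi'|\le\|\xi'\|_{L^\infty(\R)}$ exactly as in your conclusion. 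You instead re-derive the chain rule from scratch: approximate $\psi$ by $\mathcal C^\infty_0$ functions via Lemma~\ref{lem-density}, apply the chain rule to the smooth approximants, and pass to the limit, handling the discontinuity set of $\xi'$ by the fact that $\nabla(\psi/M)=0$ a.e.\ on the corresponding level sets. This is essentially a proof of Stampacchia's lemma in the weighted setting, so it is more laborious but self-contained, and it makes visible where the hypotheses \eqref{H1}--\eqref{H2} enter (through the density lemma), whereas the paper's localization argument does not actually need them for this step. One small piece of bookkeeping you leave implicit: to conclude $\fy\in H^1_M$ you also need $\xi(\psi/M)\in L^2(M\,dx)$, which follows from the Lipschitz bound $|\xi(t)|\le|\xi(0)|+\|\xi'\|_{L^\infty(\R)}|t|$ together with $\psi/M\in L^2(M\,dx)$ and $\int_\Omega M=1$; worth one sentence.
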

%
\begin{proof}
The proof of this lemma uses the Stampacchia lemma which affirms that if $g\in H^1(\omega)$, $\omega$ being an open subset of~$\R^d$, and $\xi:\R\rightarrow \R$ is continuous, piecewise-$\mathcal C^1$, such that~$\xi'$ is bounded on~$\R$ then we have $\xi(g) \in H^1(\omega)$ and $\nabla\xi(g)= \xi'(g)\nabla g$.\par
\noindent The Stampacchia lemma is a local result, hence applied to $g=\nfrac{\psi}{M}$ which is in $H^1_{\text{loc}}(\Omega)$ it shows that the formula $\nabla \xi \big( \frac{\psi}{M}\big) = \xi'\big( \frac{\psi}{M}\big)\nabla \big( \frac{\psi}{M}\big)$ holds in~$\mathcal D'(\Omega)$.
From this formula, it is obvious that $\fy \in H^1_M$.
The fact that~$\fy$ is null average is then immediate since $\int_\Omega M=1$.
\end{proof}
%
\noindent One more important ingredient in our study is the following linear operator
\begin{equation*}
\mathcal L \psi = -\div \Big( M \nabla \Big( \frac{\psi}{M} \Big) \Big)
\end{equation*}
on the space~$L^2_M$ and with domain, see~\cite[Remark~3.8, p.~9]{Masmoudi} given by
\begin{equation*}
D(\mathcal L) = \{\psi\in H^1_M\ ;\ \int_\Omega \frac{1}{M} \Big| \div \Big( M \nabla \Big( \frac{\psi}{M} \Big) \Big) \Big|^2 < +\infty\}.
\end{equation*}
We also find in~\cite[Proposition 3.6, p.~8]{Masmoudi} 
the following result and its proof which will be used to introduce the Galerkin approximation method later.
\begin{lemma}\label{Galerkin}
The operator $\mathcal L$ is self-adjoint and positive. Moreover, it has a discrete spectrum formed by a sequence $(\ell_n)_{n\in \N}$ such that~$\ell_n$ tends to $+\infty$ when~$n$ tends to $+\infty$.
\end{lemma}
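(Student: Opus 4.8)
The statement is the classical spectral theory for a self-adjoint, positive operator with compact resolvent, transported to the weighted setting $L^2_M$. The plan is to verify, in order: (i) that $\mathcal L$ is symmetric and positive on its domain, (ii) that it is in fact self-adjoint (not merely symmetric), (iii) that its resolvent is compact, which by the spectral theorem for such operators yields a discrete spectrum of eigenvalues accumulating only at $+\infty$. The unweighted analogue is the fact that the Dirichlet Laplacian on a bounded domain has this structure; here the role of $H^1_0(\Omega)$ is played by $H^1_M$ and the compact injection $H^1_0(\Omega)\hookrightarrow L^2(\Omega)$ is replaced by Lemma~\ref{lem-compacity}.

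\textbf{Symmetry and positivity.} For $\psi,\chi\in D(\mathcal L)$ one writes, using the definition of $\mathcal L$ and integrating by parts (the boundary terms vanishing because $M=0$ on $\Gamma$, which by Lemma~\ref{lem-inclusion} forces $\psi/\sqrt M,\chi/\sqrt M$ to have zero trace),
\begin{equation*}
\langle \mathcal L\psi,\chi\rangle_{L^2_M}
= \int_\Omega \frac{1}{M}\Big(-\div\Big(M\nabla\big(\tfrac{\psi}{M}\big)\Big)\Big)\frac{\chi}{M}
= \int_\Omega M\,\nabla\big(\tfrac{\psi}{M}\big)\cdot\nabla\big(\tfrac{\chi}{M}\big),
\end{equation*}
which is manifestly symmetric in $\psi,\chi$ and, taking $\chi=\psi$, nonnegative; it equals $\|\psi\|_{H^1_{M,0}}^2$ (after subtracting the mean, but positivity is clear regardless). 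One must be a little careful justifying the integration by parts on the degenerate weight, but this is where the cut-off argument of Lemma~\ref{lem-density}, together with the Hardy estimate of Lemma~\ref{lem-hardy}, legitimizes the computation by density.

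\textbf{Self-adjointness and discreteness.} Rather than checking the adjoint domain directly, I would invoke Lax--Milgram in $H^1_{M,0}$: the bilinear form $a(\psi,\chi)=\int_\Omega M\nabla(\psi/M)\cdot\nabla(\chi/M)$ is continuous and, by Lemma~\ref{lem-kernel} (which makes $\|\cdot\|_{H^1_{M,0}}$ a norm) together with the Poincaré-type control it provides, coercive on $H^1_{M,0}$; hence $\mathcal L+\mathrm{Id}$ (say) is boundedly invertible from $L^2_M$ into $D(\mathcal L)\subset H^1_M$. Composing this inverse with the injection $H^1_M\hookrightarrow L^2_M$, which is compact by Lemma~\ref{lem-compacity}, shows $(\mathcal L+\mathrm{Id})^{-1}$ is a compact, self-adjoint, positive operator on the Hilbert space $L^2_M$. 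The Hilbert--Schmidt spectral theorem then gives an orthonormal basis of eigenfunctions with eigenvalues $\mu_n\searrow 0$; setting $\ell_n=\mu_n^{-1}-1$ produces the asserted sequence $\ell_n\to+\infty$, and self-adjointness of $\mathcal L$ follows from that of its resolvent. The only genuinely delicate point is the coercivity/Poincaré inequality on $H^1_{M,0}$ in this degenerate weighted space — everything else is a transcription of the standard argument — but this is precisely what is supplied by the earlier lemmas (notably Lemma~\ref{lem-kernel} and Lemma~\ref{lem-compacity}), so the proof is essentially a bookkeeping exercise assembling those ingredients; indeed the authors note it can be quoted from \cite{Masmoudi}.
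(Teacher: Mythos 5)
The paper does not actually prove this lemma: it is quoted verbatim, with its proof, from \cite[Proposition 3.6]{Masmoudi}, so there is no in-text argument to compare yours against. Your proof is the standard compact-resolvent argument and it is correct in substance: the symmetric nonnegative form $a(\psi,\chi)=\int_\Omega M\nabla(\psi/M)\cdot\nabla(\chi/M)$ is closed on $H^1_M$, the shifted form $a+\langle\cdot,\cdot\rangle_{L^2_M}$ is exactly $\|\cdot\|_{H^1_M}^2$ and hence coercive, so the associated self-adjoint operator $\mathcal L+\mathrm{Id}$ is boundedly invertible from $L^2_M$ onto its domain with values in $H^1_M$; composing with the compact injection of Lemma~\ref{lem-compacity} makes the resolvent compact, and the spectral theorem for compact self-adjoint operators yields the eigenvalue sequence $\ell_n\to+\infty$. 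Two remarks. First, your displayed identity carries a spurious factor $1/M$: since $\langle f,g\rangle_{L^2_M}=\int_\Omega fg/M$, the middle expression should read $\int_\Omega\bigl(-\div(M\nabla(\psi/M))\bigr)\tfrac{\chi}{M}$; the two outer expressions are the right ones, and justifying the integration by parts (density from Lemma~\ref{lem-density} plus the Hardy control of Lemma~\ref{lem-hardy} to kill the boundary term on the degenerate weight) is indeed the only delicate point --- it is the same identity~\eqref{operator-weak} that the paper itself uses without proof inside Lemma~\ref{lem-kernel}. Second, once you shift by the identity, the appeal to Lemma~\ref{lem-kernel} and to a Poincar\'e-type inequality for coercivity is unnecessary and slightly misleading: coercivity of $a+\|\cdot\|_{L^2_M}^2$ on $H^1_M$ is an identity of norms, and the kernel $\{\lambda M\}$ only matters if one wants to invert $\mathcal L$ itself on the mean-free subspace. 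Neither point is a gap in the argument.
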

\noindent Concerning the uniqueness results for a linear operator, it is known that the eigenvalue~$0$, that is the kernel of the operator~$\mathcal L$, is particularly important.
\begin{lemma}\label{lem-kernel}
The kernel of the operator~$\mathcal L$ is the set $\{ \lambda M, \lambda \in \R\}$.
\end{lemma}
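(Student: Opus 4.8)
The plan is to prove the two inclusions separately. The inclusion $\{\lambda M:\lambda\in\R\}\subset\ker\mathcal{L}$ is the easy one: for $\psi=\lambda M$ the ratio $\psi/M\equiv\lambda$ is constant, hence $\nabla(\psi/M)=\0$, so $M\nabla(\psi/M)=\0$ and $\mathcal{L}\psi=-\div(\0)=0$; one checks along the way that $\lambda M\in H^1_M$ (indeed $\|\lambda M\|_{H^1_M}^2=\lambda^2$) and that $\lambda M\in D(\mathcal{L})$ trivially.

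For the reverse inclusion I would exploit the variational nature of $\mathcal{L}$. Recall that $\mathcal{L}$ is, by its very construction (this is the structure underlying Lemma~\ref{Galerkin}, cf.~\cite{Masmoudi}), the self-adjoint, positive operator on $L^2_M$ associated with the closed symmetric form
$$a(\psi,\phi)=\int_\Omega M\,\nabla\Big(\frac{\psi}{M}\Big)\cdot\nabla\Big(\frac{\phi}{M}\Big),\qquad\psi,\phi\in H^1_M,$$
whose form domain is exactly $H^1_M$ (the $H^1_M$-norm is precisely the graph norm of $a$), so that $\langle\mathcal{L}\psi,\phi\rangle_{L^2_M}=a(\psi,\phi)$ for every $\psi\in D(\mathcal{L})$ and every $\phi\in H^1_M$. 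Then, taking $\psi\in\ker\mathcal{L}$ and testing against $\phi=\psi$ gives
$$0=\langle\mathcal{L}\psi,\psi\rangle_{L^2_M}=\int_\Omega M\,\Big|\nabla\Big(\frac{\psi}{M}\Big)\Big|^2,$$
and since $M>0$ on $\Omega$ this forces $\nabla(\psi/M)=\0$ almost everywhere in $\Omega$. Because $\Omega$ is connected and $\psi/M\in H^1_{\mathrm{loc}}(\Omega)$ (recall $\psi\in H^1_M$ and $M$ is locally bounded below away from $\Gamma$), this means $\psi/M$ is a.e. equal to a constant $\lambda$, i.e. $\psi=\lambda M$, which is the desired inclusion.

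The step I expect to be the main obstacle is the justification of the identity $\langle\mathcal{L}\psi,\psi\rangle_{L^2_M}=\int_\Omega M|\nabla(\psi/M)|^2$ for $\psi\in D(\mathcal{L})$. Written out, the left-hand side equals $\int_\Omega-\div\!\big(M\nabla(\psi/M)\big)\,(\psi/M)$, and a formal integration by parts produces, besides the wanted term, the boundary contribution $-\int_\Gamma (\psi/M)\,M\,\nabla(\psi/M)\cdot\bnu$, which is an indeterminate expression since $M=0$ on $\Gamma$ while $\psi/M$ may blow up there. The cleanest way around this is simply to invoke the form-theoretic construction of $\mathcal{L}$ (so that no integration by parts is carried out by hand); alternatively, one first proves the identity for test functions $\phi$ compactly supported in $\Omega$ — for which there is genuinely no boundary term and the computation is elementary — and then extends it to all $\phi\in H^1_M$ by density, using truncations that vanish in a neighbourhood of $\Gamma$ as in the proof of Lemma~\ref{lem-density}; passing to the limit there requires only $\mathcal{L}\psi\in L^2_M$ and $\psi\in H^1_M$.
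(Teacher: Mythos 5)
Your proof is correct and follows essentially the same route as the paper: both rest on the identity $\langle \mathcal L \psi,\psi \rangle_{L^2_M} = \int_\Omega M\,|\nabla(\psi/M)|^2$, from which $\mathcal L\psi=0$ forces $\nabla(\psi/M)=\0$ and connectedness of $\Omega$ gives $\psi=\lambda M$. The only difference is that you spell out the justification of that identity (absence of boundary terms via the form-theoretic construction or a density argument), which the paper simply asserts by quoting the weak formulation of $\mathcal L$.
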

\begin{proof}
This lemma is an immediate consequence of the following formulation of the operator~$\mathcal L$:
\begin{equation}\label{operator-weak}
\langle \mathcal L \psi,\fy \rangle_{L^2_M} = \int_\Omega M\nabla\Big( \frac{\psi}{M} \Big) \cdot \nabla\Big( \frac{\fy}{M} \Big)
\end{equation}
where $\langle \cdot,\cdot \rangle_{L^2_M}$ corresponds to the scalar product subordinated to the norm $\|\cdot\|_{L^2_M}$ on~$L^2_M$. In fact, let~$\psi$ be a function such that $\mathcal L \psi=0$. We obtain $\langle \mathcal L \psi,\psi \rangle_{L^2_M} = 0$ and the formulation~\eqref{operator-weak} yields $\nabla \big( \nfrac{\psi}{M} \big) = 0$. Thus, thanks to the connexity of~$\Omega$, we deduce that $\psi = \lambda M$ with $\lambda \in \R$.
\end{proof}
%
\noindent The last lemma is a generalized Poincaré inequality adapted to the weighted spaces introduced before. To obtain such a lemma, we use the fact that the potential $V=\ln(M)$ is concave. More precisely we will suppose that
\begin{equation}\label{H3}\tag{$\mathcal H_3$}
\exists \gamma>0 \quad \nabla \Big( \frac{\nabla M}{M} \Big) \leq -\gamma~\mathrm{Id}.
\end{equation}
\begin{lemma}[Poincaré-type inequality]\label{lem-poincare}
If~$M$ satisfies~\eqref{H3} then for all $\fy\in H^1_M$ we get the following Poincaré-type inequality
\begin{equation*}
\frac{1}{\gamma}\int_\Omega M \Big|\nabla \Big( \frac{\fy}{M} \Big) \Big|^2 + \Big( \int_\Omega \fy \Big)^2 \geq \|\fy\|_{L^2_M}^2.
\end{equation*}
\end{lemma}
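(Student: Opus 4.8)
The plan is to exploit the concavity assumption~\eqref{H3} via the classical Bakry-Émery strategy: when the potential $V=\ln M$ is uniformly concave, i.e.\ $\nabla^2 V = \nabla(\nabla M/M) \leq -\gamma\,\mathrm{Id}$, the Gibbs measure $M\,dx$ satisfies a Poincaré inequality with constant $1/\gamma$. The subtlety here is that $M=0$ on $\Gamma$, so $M\,dx$ is a genuinely degenerate weight and the function $u := \fy/M$ need not lie in the ``nice'' weighted space; we must be careful to only use integrations by parts that are justified by the lemmas already proved (in particular Lemma~\ref{lem-density}, which gives density of $\mathcal C^\infty_0$, and Lemma~\ref{lem-hardy}, which controls boundary behaviour). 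I would therefore first reduce, by density (Lemma~\ref{lem-density}), to the case $\fy\in \mathcal C^\infty_0(\Omega)$, so that $u=\fy/M$ is smooth and compactly supported in $\Omega$ and all integrations by parts below are elementary and have no boundary terms.

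The core computation is the following. Write $u=\fy/M$, set $\bar u = \int_\Omega \fy = \int_\Omega M u$ (the $M$-average of $u$, since $\int_\Omega M=1$), and consider
\begin{equation*}
\|\fy\|_{L^2_M}^2 - \bar u^2 = \int_\Omega M\, u^2 - \Big(\int_\Omega M\,u\Big)^2 = \mathrm{Var}_M(u),
\end{equation*}
the variance of $u$ against the probability measure $M\,dx$. Replacing $u$ by $u-\bar u$ we may assume $\int_\Omega M u = 0$, and it suffices to show $\int_\Omega M u^2 \leq \tfrac1\gamma \int_\Omega M|\nabla u|^2$. The standard way is to write $u = u - \int M u$ as $-\mathcal L^{-1}$ applied to something, or — more self-containedly — to use the integration-by-parts identity
\begin{equation*}
\int_\Omega M\,|\nabla u|^2 = -\int_\Omega u\,\div\!\big(M\nabla u\big) = \int_\Omega u\,\mathcal L u,
\end{equation*}
together with the commutation/Bochner identity: for $\mathcal L = -\frac1M\div(M\nabla\,\cdot) = -\Delta - \nabla V\cdot\nabla$ one has the Bakry-Émery $\Gamma_2$ inequality
\begin{equation*}
\Gamma_2(u) := \tfrac12\mathcal L|\nabla u|^2 - \nabla u\cdot\nabla(\mathcal L u) = \|\nabla^2 u\|^2 - \nabla u\cdot(\nabla^2 V)\,\nabla u \ \geq\ \gamma\,|\nabla u|^2,
\end{equation*}
using precisely~\eqref{H3}. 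Integrating this against $M\,dx$ kills the $\mathcal L|\nabla u|^2$ term and yields $\int_\Omega M|\mathcal L u|^2 \geq \gamma \int_\Omega M|\nabla u|^2$; combined with Cauchy-Schwarz $\big(\int M|\nabla u|^2\big)^2 = \big(\int M u\,\mathcal L u\big)^2 \leq \int M u^2 \int M|\mathcal L u|^2$ this gives the Poincaré inequality $\int_\Omega M u^2 \leq \tfrac1\gamma\int_\Omega M|\nabla u|^2$ (on the zero-average subspace), which is exactly the claimed estimate after adding back $\big(\int_\Omega\fy\big)^2$.

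Alternatively — and this is probably cleaner given what is available — one can avoid $\Gamma_2$ entirely and argue directly: for smooth compactly supported $u$, expand
\begin{equation*}
\int_\Omega M\,|\nabla u|^2 = \int_\Omega e^{V}|\nabla u|^2,
\end{equation*}
and use the one-dimensional/Brascamp-Lieb-type inequality for log-concave densities, or simply note that concavity of $V$ forces $-\nabla^2 V \geq \gamma\,\mathrm{Id}$ and run the classical semigroup interpolation $\mathrm{Var}_M(u) = -\int_0^\infty \frac{d}{dt}\int_\Omega (P_t u)^2\,M\,dx\,dt = 2\int_0^\infty \int_\Omega |\nabla P_t u|^2 M\,dx\,dt$, with $\int|\nabla P_t u|^2 M \leq e^{-2\gamma t}\int|\nabla u|^2 M$ by~\eqref{H3} applied to the evolution of $|\nabla P_t u|^2$; integrating gives the factor $1/\gamma$. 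Either route works; the main obstacle I anticipate is not the convexity estimate itself but the justification that one may restrict to smooth compactly supported $\fy$ (so that $P_t$, $\mathcal L$, and all integrations by parts make literal sense despite $M$ vanishing at $\Gamma$) and then pass to the limit for general $\fy\in H^1_M$ — this is where Lemma~\ref{lem-density} and the $H^1_M$-continuity of both sides of the inequality are essential, and it is the only place the degeneracy of the weight really bites.
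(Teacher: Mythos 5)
Your second, preferred route is essentially identical to the paper's proof: the paper runs the heat flow $u_t+\mathcal L u=0$ with $u(0)=\fy$, shows $D'(t)=-2H(t)$ and $H'(t)\leq -2\gamma H(t)$ for $D(t)=\|u\|_{L^2_M}^2$ and $H(t)=\int_\Omega M|\nabla(u/M)|^2$ using exactly the commutation identity $\nabla(\mathcal L^\star v)=\mathcal L^\star(\nabla v)-\nabla\big(\frac{\nabla M}{M}\big)\cdot\nabla v$ together with~\eqref{H3}, then integrates in time and identifies $D(+\infty)=\big(\int_\Omega\fy\big)^2$ via Lemmas~\ref{Galerkin} and~\ref{lem-kernel}. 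Your reduction to zero $M$-average and your insistence on first passing to $\mathcal C^\infty_0$ by Lemma~\ref{lem-density} are reasonable (and arguably more careful than the paper about the integrations by parts near $\Gamma$).

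Your first route, however, does not close as written. From the Cauchy--Schwarz inequality $\big(\int_\Omega M|\nabla u|^2\big)^2\leq \int_\Omega Mu^2\cdot\int_\Omega M|\mathcal L u|^2$ together with $\int_\Omega M|\mathcal Lu|^2\geq\gamma\int_\Omega M|\nabla u|^2$ you only obtain a \emph{lower} bound on $\int_\Omega Mu^2$ (namely $\int_\Omega Mu^2\geq \big(\int_\Omega M|\nabla u|^2\big)^2/\int_\Omega M|\mathcal Lu|^2$), not the upper bound $\int_\Omega Mu^2\leq\frac1\gamma\int_\Omega M|\nabla u|^2$; the two displayed facts are compatible with an arbitrarily large variance. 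The standard way to extract Poincar\'e from the integrated $\Gamma_2$ inequality without the semigroup is spectral: applying both identities to an eigenfunction $\mathcal Lu=\lambda u$ gives $\lambda\int_\Omega M|\nabla u|^2=\int_\Omega M|\mathcal Lu|^2\geq\gamma\int_\Omega M|\nabla u|^2$, hence every nonzero eigenvalue satisfies $\lambda\geq\gamma$, and one then expands a zero-mean function in the eigenbasis furnished by Lemma~\ref{Galerkin}. Since you designate the semigroup route as the one you would actually run, the proposal as a whole is sound, but the claim that ``either route works'' should be amended.
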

\noindent For the free-average functions (that is for $\psi\in H^1_{M,0}$) this Lemma~\ref{lem-poincare} show that the two norms $\|\cdot\|_{H^1_M}$ and $\|\cdot\|_{H^1_{M,0}}$ on this space are equivalents. This equivalence will be usually useful in the remainder of the paper.
\begin{proof}
Let $\fy\in H^1_M$ and introduce the non-stationary problem
  \begin{equation*}
\left\{
\begin{aligned}
&    u_t(t,\x) + \mathcal L u(t,\x) = 0 \quad \text{for $(t,\x)\in \R\times \Omega$,}\\
&    u(0,\x)=\fy(\x) \hspace{1.55cm} \text{for $\x\in \Omega$.}
\end{aligned}
\right.
  \end{equation*}
The following time-dependant functions
\begin{equation*}
  D(t) = \int_\Omega M\Big| \frac{u}{M} \Big|^2
\quad \text{and} \quad
  H(t) = \int_\Omega M\Big| \nabla \Big( \frac{u}{M} \Big) \Big|^2
\end{equation*}
satisfy
\begin{equation*}
  D'(t)
=
  2 \int_\Omega \frac{u}{M}\cdot u_t
=
  -2 \int_\Omega \frac{u}{M}\cdot \mathcal Lu
=
  -2 \int_\Omega M\Big| \nabla \Big( \frac{u}{M} \Big) \Big|^2
=
  -2 H(t).
\end{equation*}
Moreover we have
\begin{equation*}
  H'(t)
=
  -2 \int_\Omega M \nabla \Big( \frac{u}{M} \Big) \cdot \nabla \Big( \frac{\mathcal Lu}{M} \Big).
\end{equation*}
For clearify the following computations, let us introduce the duality operator $\mathcal L^\star$ such that
\begin{equation*}
  \mathcal L^\star v := -\frac{1}{M} \div (M \nabla v) = -\Delta v - \frac{\nabla M}{M}\cdot \nabla v.
\end{equation*}
We have $\mathcal Lu = M\mathcal L^\star \big( \frac{u}{M} \big)$ and
\begin{equation*}
  H'(t)
=
  -2 \int_\Omega M \nabla \Big( \frac{u}{M} \Big) \cdot \nabla \Big( \mathcal L^\star \Big( \frac{u}{M} \Big) \Big).
\end{equation*}
Since $\nabla (\mathcal L^\star v) = \mathcal L^\star (\nabla v) - \nabla \big( \frac{\nabla M}{M} \big)\cdot \nabla v$ we deduce that
\begin{equation*}
\begin{aligned}
  H'(t)
& =
  - 2 \int_\Omega M \nabla \Big( \frac{u}{M} \Big) \cdot \mathcal L^\star \Big( \nabla \Big( \frac{u}{M} \Big) \Big) \\
& \hspace{2cm} + 2 \int_\Omega M \nabla \Big( \frac{u}{M} \Big) \cdot \Big[ \nabla \Big( \frac{\nabla M}{M} \Big) \cdot \nabla \Big( \frac{u}{M} \Big) \Big].
\end{aligned}
\end{equation*}
The first term of the right hand side is written $- 2 \int_\Omega M \big| \nabla^2 \big( \frac{u}{M} \big)\big|^2$ and is non-positive. Using the assumption~\eqref{H3}, the last term of the right hand side is controled by $- 2 \gamma \int_\Omega M \big| \nabla \big( \frac{u}{M} \big)\big|^2$, that is by $-2\gamma H(t)$. We obtain
\begin{equation*}
  H'(t) \leq -2 \gamma ~ H(t).
\end{equation*}
Hence $H(t) \leq H(0)e^{-2\gamma t}$. Integrate in time, we obtain:
\begin{equation}\label{1642}
  D(0) - D(+\infty) = 2 \int_0^{+\infty} H(t)~dt \leq \frac{1}{\gamma}H(0).
\end{equation}
To evaluate $D(+\infty)$, we consider a stationary solution~$u_\infty$. We note that due to the spectral properties of the operator~$\mathcal L$ (see Lemma~\ref{Galerkin}), for any initial data,~$u$ tends to a stationary solution~$u_\infty$ as $t\rightarrow +\infty$. By definition it is in the kernel of $\mathcal L$ and following the Lemma~\ref{lem-kernel} there exists a constant~$\lambda$ such that $u_\infty = \lambda M$. But the evolution equation on~$u$ implies that the mean value $\int_\Omega u$ is conserved: $\int_\Omega u_\infty = \int_\Omega \fy$, that allows to obtain the constant $\lambda=\int_\Omega \fy$. We deduce that
\begin{equation*}
  D(+\infty) = \int_\Omega M\Big| \frac{u_\infty}{M} \Big|^2 = \Big( \int_\Omega \fy \Big)^2.
\end{equation*}
Consequently, the inequality~\eqref{1642} corresponds to the following one
\begin{equation*}
\int_\Omega  M \Big| \frac{\fy}{M} \Big|^2
-
\Big( \int_\Omega \fy \Big)^2
\leq
\frac{1}{\gamma} \int_\Omega  M \Big|\nabla \Big( \frac{\fy}{M} \Big) \Big|^2
\end{equation*}
which exactly is the inequality announced by the Lemma~\ref{lem-poincare}.
\end{proof}
\noindent Notice that it is possible to obtain a proof of this Poincaré-type inequality by contradiction, see for instance \cite[p.7]{Masmoudi}, or peraphs using the hole-space case (for example for $\Omega=\R^d$) proved in H.J.~Brascamp~\cite{Brascamp} (see also Proposition 2.1 in~\cite{Degond}).

\section{Statement of the main theorem}\label{sec-assumptions}

\subsection{Definition of weak solution}\label{sub1-1}

\noindent When we consider the Fokker-Planck equation~\eqref{FP} with vector field~$\F$ decomposed as the sum $\F=\bkappa+\nabla V$ where $\bkappa\in L^\infty(\Omega)$ and $e^V\in L^1(\Omega)$, we can introduce the maxwellian function~$M$ by $M = \frac{e^V}{\int_\Omega e^V}$ and rewrite~\eqref{FP} as $\div ( \fy \bkappa - M\nabla \big( \frac{\fy}{M} \big)) = f$.
If we look for a solution with given average, that is for instance a solution such that $\int_\Omega \fy=1$, then we can reduce to the case where~$\fy$ is free-average exchanging~$\fy$ into $\fy-M$ and~$f$ into $f-\div(M\, \bkappa)$. We obtain the following problem
\begin{equation*}\label{pb1-fort}
\left\{
\begin{aligned}
\div ( \fy \bkappa - M\nabla \Big( \frac{\fy}{M} \Big)) &= f \qquad \text{in $\Omega$},\\
\text{with}\quad \int_\Omega \fy &= 0.
\end{aligned}
\right.
\end{equation*}
Using the adapted spaces introduce in the previous part, the weak formulation of this equation is written: find $\fy\in H^1_{M,0}$ such that for all $\psi\in H^1_{M,0}$
\begin{equation}\label{pb1-faible}
\int_\Omega M \nabla\left( \frac{\fy}{M}\right) \cdot \nabla\left( \frac{\psi}{M}\right) 
-
\int_\Omega \fy \bkappa \cdot \nabla\left( \frac{\psi}{M}\right) 
=
\langle f,\psi \rangle
\end{equation}
where $\langle \cdot,\cdot \rangle$ denote the duality brackets between $H^{-1}_M$ and $H^1_{M,0}$. 
\subsection{Assumptions on the potential}\label{sub1-2}
%
\noindent In this article we are interested in the case where the vector fields $\F$ quickly explodes near to the boundary. The fact that $\F$ is decomposed as a sum of two terms makes it possible to describe all the ``explosive'' behavior in the part $\nabla V$. 
In addition to the fact that~$V$ equals~$-\infty$ on~$\Gamma$ to ensure the explosion, the assumptions given on~$V$ (or on~$M$, which is equivalent) can be checked only in a neigborhood of the boundary~$\Gamma$.
More precisly in order to use the lemmas proved we will use the following assumptions
\begin{equation}\label{H1}\tag{$\mathcal H_1$}
\left\{
\begin{aligned}
& \exists a<1 \quad \Big( \frac{\nabla_RM}{M} \Big)^2 + 2 \, \nabla_R\Big( \frac{\nabla_RM}{M} \Big)  \geq \frac{-a}{\delta_\Gamma^2}, \\
& \nabla_R M(0)=0, \\
& \exists b>0 \quad \nabla_R M \int_\Omega \frac{1}{M} < b,
\end{aligned}
\right.
\end{equation}
\begin{equation}\label{H2}\tag{$\mathcal H_2$}
\exists c>0 \qquad |\nabla M| \leq \frac{1}{c}\, \frac{M}{\delta_\Gamma},
\end{equation}
\begin{equation}\label{H3}\tag{$\mathcal H_3$}
\exists \gamma>0 \quad \nabla \Big( \frac{\nabla M}{M} \Big) \leq -\gamma~\mathrm{Id},
\end{equation}
where we recall that~$\nabla_R$ corresponds to the normal derivative and where~$\delta_\Gamma$ represents the distance to~$\Gamma$.\\
Notice that we can rewrite these assumptions in term of the potential $V$ (wich is given with respect to the maxwellian $M$ by $V=\ln M$), see for instance Theorem~\ref{the-main}, page~\pageref{the-main}.
It is important to note that these assumptions are satisfied for the radial functions~$M$ (i.e. functions depending only on the distance to the boundary) on the following form near to the boundary
\begin{equation*}
M(r) = r^\alpha \quad \text{with $\alpha>1$}.
\end{equation*}
In other words, the result is shown for vector fields~$\F$ whose the normal component explodes like~$\frac{\alpha}{\delta_\Gamma}$ with $\alpha>1$.
%
\begin{rema}
As it was announced as introduction, an interesting case corresponds to the following Fokker-Planck equation 
\begin{equation*}
-\eps \Delta \fy + \div (\fy \widetilde \F) = f,
\end{equation*}
making appear a small parameter~$\eps$. We can come back to the previous case using $\F= \frac{1}{\eps} \widetilde \F$. We note that if we define a Maxwellian~$M$ such that $\F= \nfrac{\nabla M}{M}$ then the Maxwellian $\widetilde M$ adapted to $\widetilde \F$, i.e. such that $\widetilde \F=\nfrac{\nabla \widetilde M}{\widetilde M}$, satisfies $M=C\,{\widetilde M}^{1/\eps}$. The assumptions on~$M$ can thus be interpreted on~$\widetilde M$ and we show that they are less constraining in the following sense: they are checked when the normal component of~$\widetilde \F$ behave like~$\frac{\alpha}{\delta_\Gamma}$ for all $\alpha>\eps$.
\end{rema}
\label{rema-regul_sur_kappa}
\noindent Concerning the assumption on the ``interior'' part~$\bkappa$ of $\F=\bkappa+\nabla V$, that is about $\bkappa \in L^\infty(\Omega)$, we can note that this assumption is stronger than that announced by J.~Droniou in~\cite{Droniou}.
In fact, we will see during the proof that the regularity required on~$\bkappa$ comes from a product lemma. Roughly speaking, if the product of a function~$H^1(\Omega)$ by a function~$L^p(\Omega) $ is a function~$L^2(\Omega)$ then the theorem is true as soon as~$\bkappa$ belongs to~$L^p(\Omega)$.
In the classical case the usual Sobolev injections $H^1(\Omega) \subset L^{2d/(d-2)}$ imply that $p=d_*$ is sufficient.
In our case the injections of ``Sobolev'' type (see the Lemma~\ref{lem-injection}) are not also ``generous'' and a product $H^1_M \times L^p(\Omega)$ will not belong to~$L^2(\Omega)$ for as many values of~$p$. We can possibly improve the result of the theorem by taking $\bkappa \in L^p(\Omega)$ with $p\geq d+\nfrac{1}{c}$.

\subsection{Main theorem}\label{sub1-toto}

\noindent We prove in Part~\ref{sec-proof} the following theorem.
\begin{theorem}\label{th1}
Let $\Omega$ be a bounded domain of $\R^d$, $d\geq 2$. We denote by~$\Gamma$ its boundary which is assumed to be of class~$\mathcal C^2$.
Let $f\in H^{-1}_M$ and $\F=\bkappa+\nabla V$ where $\bkappa\in L^\infty(\Omega)$ and $V\in \mathcal C^\infty(\Omega)$ satisfies $V = -\infty$ on~$\Gamma$.\\
If the assumptions~\eqref{H1},~\eqref{H2} and~\eqref{H3} hold then the problem~\eqref{pb1-faible} admits a unique solution $\fy\in H^1_{M,0}$.
\end{theorem}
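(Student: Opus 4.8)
The plan is to follow the classical strategy of J.~Droniou~\cite{Droniou} but transported into the weighted spaces $H^1_{M,0}$ and $L^p_M$ whose properties were established in Section~\ref{sec-tools}. Write the weak problem~\eqref{pb1-faible} as $a(\fy,\psi)-b(\fy,\psi)=\langle f,\psi\rangle$ where $a(\fy,\psi)=\int_\Omega M\nabla(\fy/M)\cdot\nabla(\psi/M)=\langle\mathcal L\fy,\psi\rangle_{L^2_M}$ and $b(\fy,\psi)=\int_\Omega \fy\,\bkappa\cdot\nabla(\psi/M)$. By the Poincar\'e-type inequality (Lemma~\ref{lem-poincare}), $a$ is coercive on $H^1_{M,0}$, but the full operator is not, because the lower-order term $b$ is not a compact perturbation in an obvious way and changes sign. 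So the first step is to set up a Galerkin approximation using the eigenbasis $(\psi_n)$ of the self-adjoint operator $\mathcal L$ provided by Lemma~\ref{Galerkin} (restricted to the average-zero space, using Lemma~\ref{lem-kernel} to discard the kernel). On the finite-dimensional space $E_N=\mathrm{span}(\psi_1,\dots,\psi_N)$, the map $\psi\mapsto a(\cdot,\psi)-b(\cdot,\psi)$ is represented by a matrix; existence of $\fy_N\in E_N$ solving the projected equation will follow from a topological-degree / Brouwer argument once we have an a priori bound.

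The second step, which is the crux of the existence half, is the a priori estimate. Taking $\psi=\fy_N$ gives $\|\fy_N\|_{H^1_{M,0}}^2 = b(\fy_N,\fy_N)+\langle f,\fy_N\rangle$. The term $\langle f,\fy_N\rangle$ is harmless: $\le\|f\|_{H^{-1}_M}\|\fy_N\|_{H^1_{M,0}}$. For the quadratic term we must bound $\int_\Omega \fy_N\,\bkappa\cdot\nabla(\fy_N/M)$; here the trick of Droniou comes in. One cannot estimate it directly by $\|\fy_N\|_{H^1_{M,0}}^2$ with a small constant, so instead one uses the test function machinery of Lemma~\ref{lem-test}: choosing truncations $\xi_k(t)=\max(-k,\min(k,t))$ of $\fy_N/M$, one controls $\int_{\{|\fy_N/M|>k\}}$ contributions and shows that the problematic part of $b$ lives on a region where $\fy_N/M$ is large, whose $L^2_M$-mass is small. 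To make $\bkappa\cdot\nabla(\fy_N/M)\times \fy_N$ integrable and estimable, write $\int_\Omega \fy_N\,\bkappa\cdot\nabla(\fy_N/M) = \int_\Omega M\,(\fy_N/M)\,\bkappa\cdot\nabla(\fy_N/M)$ and apply H\"older with the Sobolev-type injection $H^1_M\hookrightarrow L^p_M$ (Lemma~\ref{lem-injection}), $p>2$: $|\int_\Omega M(\fy_N/M)\bkappa\cdot\nabla(\fy_N/M)| \le \|\bkappa\|_{L^\infty}\|\fy_N\|_{L^p_M}^{?}\cdots$ — the exponents match precisely because $p>2$ (this is exactly why Lemma~\ref{lem-injection} and the remark $\bkappa\in L^p$, $p\ge d+1/c$, were needed). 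Interpolating $L^p_M$ between $L^2_M$ and $H^1_M$ and absorbing a small multiple of $\|\fy_N\|_{H^1_{M,0}}^2$ into the left-hand side yields a uniform bound $\|\fy_N\|_{H^1_{M,0}}\le C(1+\|f\|_{H^{-1}_M})$. I expect this absorption argument — getting the constant in front of the $H^1_{M,0}$ norm strictly below $1$ — to be the main obstacle, and it is where the precise form of the weighted Sobolev exponent matters.

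The third step is passage to the limit. From the uniform bound, extract a subsequence $\fy_N\rightharpoonup\fy$ weakly in $H^1_{M,0}$; by the compact injection $H^1_M\hookrightarrow L^2_M$ (Lemma~\ref{lem-compacity}), and in fact $H^1_M\hookrightarrow L^p_M$ compactly for some $p>2$ by interpolating Lemmas~\ref{lem-compacity} and~\ref{lem-injection}, we get $\fy_N\to\fy$ strongly in $L^2_M$ (and $L^q_M$, $q<p$). This strong convergence is enough to pass to the limit in the bilinear term $b(\fy_N,\psi)=\int_\Omega M(\fy_N/M)\bkappa\cdot\nabla(\psi/M)$ for each fixed $\psi\in E_N$, since $\bkappa\cdot\nabla(\psi/M)\in L^2_M$-type weight and $\fy_N/M\to\fy/M$ strongly in $L^2(M\,dx)$; the linear term $a(\fy_N,\psi)$ passes by weak convergence. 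Since $\bigcup_N E_N$ is dense in $H^1_{M,0}$ (the $\psi_n$ are eigenfunctions of $\mathcal L$ which diagonalize $H^1_M$; density follows as in Lemma~\ref{lem-density} / the spectral theorem of Lemma~\ref{Galerkin}), the limit $\fy$ solves~\eqref{pb1-faible} for all test functions.

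For uniqueness: if $\fy_1,\fy_2$ are two solutions then $w=\fy_1-\fy_2\in H^1_{M,0}$ satisfies $a(w,\psi)=b(w,\psi)$ for all $\psi$. Taking $\psi=w$ gives $\|w\|_{H^1_{M,0}}^2=b(w,w)$, and the same absorption estimate as in step two — now with no forcing term — gives $\|w\|_{H^1_{M,0}}^2 \le \tfrac12\|w\|_{H^1_{M,0}}^2$ once the argument is run carefully, or more robustly one uses the Droniou device of testing against $w_k = M\,\xi_k(w/M)$ where $\xi_k$ is a bounded-derivative truncation (Lemma~\ref{lem-test}), splitting $b(w,w)$ over $\{|w/M|\le k\}$ and its complement and letting $k\to\infty$ while $\{|w/M|>k\}$ has vanishing $L^2_M$-mass, to conclude $\nabla(w/M)=0$, hence $w=\lambda M$ by connectedness of $\Omega$, hence $\lambda=0$ since $\int_\Omega w=0$ and $\int_\Omega M=1$. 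Either way uniqueness reduces to the same coercivity-up-to-absorption estimate that drives existence, so the real content of the theorem is concentrated in step two.
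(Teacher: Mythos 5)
Your overall architecture (approximate problem, uniform a priori bound, compactness, limit) matches the paper's, but the two steps you yourself identify as the crux are exactly where the proposal does not close. First, the a priori estimate. Since $\bkappa$ is only assumed in $L^\infty(\Omega)$ with no smallness, the term $b(\fy_N,\fy_N)=\int_\Omega M\,(\fy_N/M)\,\bkappa\cdot\nabla(\fy_N/M)$ is bounded by $\|\bkappa\|_{L^\infty}\|\fy_N\|_{L^2_M}\|\fy_N\|_{H^1_{M,0}}$, which is exactly quadratic in the $H^1_{M,0}$ norm (via Lemma~\ref{lem-poincare}); no H\"older/interpolation/Young rearrangement of a degree-two expression can produce a coefficient strictly below $1$, so the announced absorption is not available. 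The paper's (Droniou's) way out, which your sketch alludes to but never executes, is a \emph{preliminary} estimate obtained by testing with $M\xi(\psi_n/M)$ where $\xi(r)=\int_0^r(1+|s|)^{-2}\,ds$: because $\xi'$ decays quadratically, the $\bkappa$-term becomes \emph{linear} in $\|M\ln(1+|\psi_n/M|)\|_{H^1_{M,0}}$, giving the uniform bound~\eqref{estimate1} and hence the quantitative level-set estimate $\mu(\{|\psi_n|\geq kM\})\lesssim(\ln(1+k))^{-2}$ independent of $n$ and of $\|\psi_n\|$. It is only this uniform smallness of $\mathcal E_k$, fed through H\"older on $\mathcal E_k$ and the injection of Lemma~\ref{lem-injection}, that produces the factor $\mathcal A(k)\to0$ allowing absorption of the $S_k$-part for $k$ large but fixed; the $T_k$-part is then linear by construction. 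Without the logarithmic estimate your step two has a genuine gap. (Your Galerkin set-up in place of the paper's truncation-plus-Schauder construction of $\psi_n$ is a legitimate variant, but it inherits the same problem: the finite-dimensional existence also needs an a priori bound, so nothing is saved.)

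Second, uniqueness. The claim $\|w\|_{H^1_{M,0}}^2=b(w,w)\leq\tfrac12\|w\|_{H^1_{M,0}}^2$ is false for the same reason as above, and the truncation device does not force $b(w,w)$ to vanish, nor does it yield $\nabla(w/M)=0$: the a priori machinery only gives $\|w\|_{H^1_{M,0}}\lesssim k\,\|\bkappa\|_{L^\infty}$ when $f=0$, not $w=0$. The paper's uniqueness proof is a separate, substantial argument that your proposal omits entirely: one introduces the dual problem $-\div(M\nabla(\phi/M))-M\bkappa\cdot\nabla(\phi/M)=g$, proves it has a solution by the Leray--Schauder topological degree (the required a priori bound on $\|\phi\|_{H^1_{M,0}}$ being obtained by \emph{duality} against the already-constructed primal solutions, tested one against the other), and then takes $g=\mathrm{sign}(\psi)$ and cross-tests to conclude $\int_\Omega|\psi|=0$. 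Reducing uniqueness to ``the same coercivity-up-to-absorption estimate'' is precisely what cannot be done here; that is why the conservative structure of $\div(\fy\,\F)$ and the dual equation both enter.
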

\noindent We can deduce - see the link between a free-average solution and a solution with given average on Subsection~\ref{sub1-1} - the following theorem where we recall all the assumptions
\begin{theo}\label{the-main}
Let $\Omega$ be a bounded domain of $\R^d$, $d\geq 2$. We denote by~$\Gamma$ its boundary which is assumed to be of class $\mathcal C^2$.
Let $f\in H^{-1}_M$ and $\F=\bkappa+\nabla V$ where $\bkappa\in L^\infty(\Omega)$ and $V\in \mathcal C^\infty(\Omega)$ satisfies $V = -\infty$ on $\Gamma$.\\
If we assume that, in a neigborhood of the boundary~$\Gamma$, we have
\begin{equation}\label{H1}\tag{$\mathcal H_1$}
\left\{
\begin{aligned}
& \exists \, a<1 \quad \Big( \nabla_RV \Big)^2 + 2 \, \nabla_R^2 V  \geq \frac{-a}{\delta_\Gamma^2}, \\
& \nabla_R V ~e^V=0 \quad \text{on $\Gamma$}, \\
& \exists \, b>0 \quad \nabla_R V ~e^V \int_\Omega e^{-V} < b,
\end{aligned}
\right.
\end{equation}
\begin{equation}\label{H2}\tag{$\mathcal H_2$}
\exists \, c>0 \qquad |\nabla V| \leq \frac{c}{\delta_\Gamma},
\end{equation}
\begin{equation}\label{H3}\tag{$\mathcal H_3$}
\exists \gamma>0 \quad \nabla^2 V \leq -\gamma~\mathrm{Id},
\end{equation}
where~$\nabla_R$ corresponds to the normal derivative and where~$\delta_\Gamma$ represents the distance to~$\Gamma$, then there exists a unique (weak) solution of the Fokker-Planck equation
\begin{equation*}
-\Delta \fy + \div(\fy \, \F) = f \quad \text{in $\Omega$},
\end{equation*}
such that $\int_\Omega \fy = 1$.
\end{theo}

\section{Proof of the Theorem~\ref{th1}}\label{sec-proof}

\subsection{Existence proof in Theorem~\ref{th1}}

\noindent{\bf Principle for the existence proof of Theorem~\ref{th1}~-~}
The maxwellian~$M$ satisfying the assumptions~\eqref{H1}, \eqref{H2} and~\eqref{H3}, we use the different lemmas proved in Part~\ref{sec-tools}. For instance, using the equivalence between the norms~$\|\cdot\|_{H^1_M}$ and~$\|\cdot\|_{H^1_{M,0}}$ on the space~$H^1_{M,0}$, see Lemma~\ref{lem-poincare}, the operator $-\div \big( M\nabla\big(\frac{\cdot}{M}\big)\big)$ is coerciv on~$H^1_{M,0}$ thus we can (see for instance the Lax-Milgram theorem) prove that there exists a weak solution (that is belonging to~$H^1_{M,0}$) to equations like
\begin{equation*}
-\div\Big( M\nabla\Big( \frac{\psi}{M}\Big)\Big) = f
\end{equation*}
as soon as the source term~$f$ belongs in~$H^{-1}_M$. Moreover in this case we have $\|\psi\|_{H^1_M}\lesssim \|f\|_{H^{-1}_M}$.\par

\noindent Because of the non-coercivity of the operator $-\div \big( M\nabla\big(\frac{\cdot}{M}\big)\big) + \div ( \cdot\, \bkappa )$, we start by studying an approach problem. For each $n\in \N$, let us consider the application $T_n:r\in \R\mapsto \max(\min(r,n),-n)\in \R$ and let us denote by~$F_n$ the following application: $F_n : \tpsi\in L^2_M \mapsto \psi \in H^1_{M,0}\subset L^2_M$ where~$\psi$ is the weak solution of
\begin{equation}\label{pb1-faible-n1}
-\div \Big( M\nabla\Big( \frac{\psi}{M}\Big)\Big) = f - \div\Big( M T_n\Big( \frac{\tpsi}{M} \Big)\bkappa \Big).
\end{equation}
For~$\tpsi\in L^2_M$ we have $MT_n(\nfrac{\tpsi}{M})\in L^2_M$ and since
\footnote
{
Here the assumption on~$\bkappa$ is essential. Following the proof of J.~Droniou~\cite{Droniou} it is possible to improve this assumption using the Sobolev injection~\ref{lem-injection} more finely. See discussion concerning the assumptions on~$\bkappa$ page~\pageref{rema-regul_sur_kappa}.
} 
$\bkappa\in L^\infty(\Omega)$ we get $\div\big( M T_n\big( \frac{\tpsi}{M} \big) \bkappa \big)\in H^{-1}_M$. The function~$F_n$ is then well defined.\par

\noindent Let us prove that~$F_n$ is a compact application by showing that its image~$F_n(L^2_M)$ is bounded in~$H^1_M$. Consider $\psi=F_n(\tpsi)\in F_n(L^2_M)$. Taking~$\psi$ as a test function in the weak formulation of the equation~\eqref{pb1-faible-n1} we obtain
\begin{equation*}
\int_\Omega M \Big|\nabla\Big( \frac{\psi}{M}\Big)\Big|^2 = \langle f,\psi \rangle + \int_\Omega M T_n\Big( \frac{\tpsi}{M} \Big) \bkappa \cdot \nabla\Big( \frac{\psi}{M}\Big).
\end{equation*}
In other words, by using the duality definition and the Cauchy-Schwarz inequality, we have
\begin{equation*}
\|\psi\|_{H^1_{M,0}}^2 \lesssim \|\psi\|_{H^1_{M,0}} + \|\bkappa\|_{L^\infty(\Omega)} \sqrt{\int_\Omega M\Big| T_n\Big( \frac{\tpsi}{M} \Big) \Big|^2} \sqrt{\int_\Omega M \Big|\nabla\Big( \frac{\psi}{M}\Big)\Big|^2}.
\end{equation*}
Using the fact successively that for all~$r\in \R$ we have $|T_n(r)|\leq n$ and that $\int_\Omega M=1$ we deduce that
\begin{equation*}
\|\psi\|_{H^1_{M,0}}^2 
\lesssim
\|\psi\|_{H^1_{M,0}} + n \|\bkappa\|_{L^\infty(\Omega)}\|\psi\|_{H^1_{M,0}}.
\end{equation*}
Consequently we have
\begin{equation*}
\|F_n(\tpsi)\|_{H^1_M} = \|\psi\|_{H^1_{M,0}} \lesssim 1 + n \|\bkappa\|_{L^\infty(\Omega)}.
\end{equation*}
Thus, the image of~$L^2_M$ by the application~$F_n$ is contained in the ball of~$H^1_M$ of radius $1+ n \|\bkappa\|_{L^\infty(\Omega)}$ (up to a multiplicative constant depending on~$\Omega$, which appears in the symbol~$\lesssim$). Moreover, the injection $H^1_M\hookrightarrow L^2_M$ is compact (see Lemma~\ref{lem-compacity}) and the application~$F_n$ is clearly continuous. Applying the Schauder fixed point theorem, we conclude that the application~$F_n$ admits a fixed point, denoted by~$\psi_n$, in~$L^2_M$. This fixed point is consequently a solution of
\begin{equation}\label{pb1-faible-n}
\int_\Omega M \nabla\Big( \frac{\psi_n}{M}\Big) \cdot \nabla\Big( \frac{\fy}{M}\Big) - \int_\Omega M T_n\Big(\frac{\psi_n}{M} \Big) \bkappa \cdot \nabla\Big( \frac{\fy}{M}\Big) = \langle f,\fy \rangle
\end{equation}
for all test functions $\fy \in H^1_{M,0}$.\\
The continuation of the proof consists of obtaining estimates on these functions~$\psi_n$ in order to be able to pass to the limit when~$n$ tends to~$+\infty$.\\

\mathversion{bold}\noindent{\bf Estimate of $M\ln ( 1+|\nfrac{\psi_n}{M}| )$ in $H^1_{M,0}$-norm~-~}\mathversion{normal}
Let~$\xi$ be the application from~$\R$ to~$\R$ defined by $\xi(r)=\int_0^r \frac{ds}{(1+|s|)^2}$. This application is continuous, piecewise-$\mathcal C^1$  and with a bounded derivative. According to Lemma~\ref{lem-test} we can choose $\fy=M\xi(\nfrac{\psi_n}{M}) - M\int_\Omega M\xi(\nfrac{\psi_n}{M})$ as a test function in formulation~\eqref{pb1-faible-n}.\\
\noindent $\bullet$ The first of the three terms obtained is treated in the following way
\begin{equation}\label{equ2102a}
\begin{aligned}
\int_\Omega M \nabla\Big( \frac{\psi_n}{M}\Big) \cdot \nabla \Big( \xi\Big(\frac{\psi_n}{M}\Big) \Big) 
& =
\int_\Omega M \frac{\Big| \nabla\Big( \frac{\psi_n}{M}\Big) \Big|^2}{\Big( 1+\Big| \frac{\psi_n}{M} \Big|\Big)^2} \\
& =
\Big\| M \ln \Big( 1+\Big| \frac{\psi_n}{M} \Big| \Big) \Big\|_{H^1_{M,0}}^2.
\end{aligned}
\end{equation}
\noindent $\bullet$ For the second term we obtain
\begin{equation*}
\begin{aligned}
\Big| \int_\Omega M T_n\Big(\frac{\psi_n}{M}\Big) \bkappa \cdot \nabla\Big( \xi\Big(\frac{\psi_n}{M}\Big) \Big) \Big|
& =
\Big| \int_\Omega \frac{M T_n\Big(\frac{\psi_n}{M}\Big)}{1+\big| \frac{\psi_n}{M}\big|} \bkappa \cdot \frac{\nabla\Big( \frac{\psi_n}{M}\Big)}{1+\big| \frac{\psi_n}{M}\big|} \Big| \\
& \hspace{-2cm} \leq 
\|\bkappa\|_{L^\infty(\Omega)} \int_\Omega \Big| \frac{T_n \big( \frac{\psi_n}{M} \big)}{1+\big| \frac{\psi_n}{M}\big|} \Big| M \Big| \nabla ( \ln \Big( 1+\Big| \frac{\psi_n}{M}\Big| \Big) ) \Big|.
\end{aligned}
\end{equation*}
Using the fact that for all~$r\in \R$, we have $|T_n(r)|\leq |r|$, we deduce that\footnote{We also use the Cauchy-Schwarz inequality to show that 
$$\int_\Omega Mf \leq \sqrt{\int_\Omega M}\sqrt{\int_\Omega Mf^2} = \sqrt{\int_\Omega Mf^2}.$$}
\begin{equation}\label{equ2102b}
\Big| \int_\Omega M T_n\Big(\frac{\psi_n}{M}\Big) \bkappa \cdot \nabla\Big( \xi\Big(\frac{\psi_n}{M}\Big) \Big) \Big| \lesssim \Big\| M \ln \Big( 1+\Big| \frac{\psi_n}{M} \Big| \Big) \Big\|_{H^1_{M,0}}.
\end{equation}
\noindent $\bullet$ For the last term, using $f\in H^{-1}_M$, we deduce
\begin{equation}\label{equ2102c}
\begin{aligned}
\big| \langle f,\fy \rangle \big|
& \lesssim
\|\fy\|_{H^1_{M,0}} 
=
\sqrt{\int_\Omega M\xi'\Big( \frac{\psi_n}{M} \Big) \nabla\Big( \frac{\psi_n}{M} \Big) }\\
& =
\sqrt{ \int_\Omega M \frac{\big| \nabla\big( \frac{\psi_n}{M}\big) \big|^2}{\big( 1+\big| \frac{\psi_n}{M} \big|\big)^2} }
=
\Big\| M \ln \Big( 1+\Big| \frac{\psi_n}{M} \Big| \Big) \Big\|_{H^1_{M,0}}.
\end{aligned}
\end{equation}
The three estimates~\eqref{equ2102a},~\eqref{equ2102b} and~\eqref{equ2102c} enable us to obtain for all~$n\in \N$
\begin{equation}\label{estimate1}
\Big\| M \ln \Big( 1+\Big| \frac{\psi_n}{M} \Big| \Big) \Big\|_{H^1_{M,0}} \lesssim 1.
\end{equation}

\mathversion{bold}\noindent{\bf Estimate of $\dsp \mu(\{\Q\in \Omega\ ;\ |\psi_n(\Q)|\geq kM(\Q)\})$~-~}\mathversion{normal}
In this paragraph, we control the size of the set where~$\psi_n$ has a large value, that is the set $\dsp \mathcal E_k = \{\Q\in \Omega\ ;\ |\psi_n(\Q)|\geq kM(\Q)\}$ for~$k\in \N$.
The natural measure in the present context is the measure $\mathrm d\mu = M(\Q) \mathrm d\Q$ ($d\Q$ being the classical Lebesgue measure on $\Omega\subset \R^d$), which enables to take into account the weight of the Maxwellian~$M$.\par
\noindent Writing $\mathcal E_k = \{\Q\in \Omega\ ;\ (\ln(1+|\nfrac{\psi_n(\Q)}{M(\Q)}|))^2 \geq (\ln(1+k))^2\}$ we obtain
\begin{equation*}
\int_\Omega M \Big(\ln(1+\Big|\frac{\psi_n}{M}\Big|)\Big)^2 = \int_{\mathcal E_k} M \Big(\ln(1+\Big|\frac{\psi_n}{M}\Big|)\Big)^2 + \int_{\Omega\setminus \mathcal E_k} M \Big(\ln(1+\Big|\frac{\psi_n}{M}\Big|)\Big)^2.
\end{equation*}
We easily deduce the following estimate
\begin{equation*}
\int_\Omega M \Big(\ln(1+\Big|\frac{\psi_n}{M}\Big|)\Big)^2 \geq \int_{\mathcal E_k} M \Big(\ln(1+\Big|\frac{\psi_n}{M}\Big|)\Big)^2  \geq \int_{\mathcal E_k} M (\ln(1+k))^2.
\end{equation*}
Introducing the measure $\mathrm d\mu = M(\Q) \mathrm d\Q$ this inequality is also rewritten
\begin{equation*}
\mu(\mathcal E_k) \leq \frac{1}{(\ln(1+k))^2} \Big\| M \ln \Big( 1+\Big| \frac{\psi_n}{M} \Big| \Big) \Big\|_{L^2_M}^2.
\end{equation*}
Taking into account the estimate~\eqref{estimate1}, the previous equation is written
\begin{equation}\label{estimate2}
\mu(\{\Q\in \Omega\ ;\ |\psi_n(\Q)|\geq kM(\Q)\}) \lesssim \frac{1}{(\ln(1+k))^2}.
\end{equation}

\mathversion{bold}\noindent{\bf Estimate of $M S_k(\nfrac{\psi_n}{M})$ in $H^1_{M,0}$-norm~-~}\mathversion{normal}
\noindent Recall that for $k\in \N$ the application~$T_k$ is given by $T_k:r\in \R\mapsto \max(\min(r,k),-k)\in \R$. We now define the application~$S_k$ such that $T_k + S_k = \mathrm{id}$.
To obtain an estimate on~$\psi_n$ we successively obtain an estimate on $M S_k(\nfrac{\psi_n}{M})$ and then on $M T_k(\nfrac{\psi_n}{M})$ for a sufficiently large~$k\in \N$.\par
%
\noindent Let $k\in \N$. Taking $\fy=MS_k(\nfrac{\psi_n}{M}) - M \int_\Omega MS_k(\nfrac{\psi_n}{M})$ as a test function test in~\eqref{pb1-faible-n}. According to Lemma~\ref{lem-test}, this choice is possible and we obtain
\begin{equation*}\label{pb1-faible-n-Sk}
\underbrace{
\int_\Omega M \nabla\Big( \frac{\psi_n}{M}\Big) \cdot \nabla \Big( S_k\Big( \frac{\psi_n}{M}\Big) \Big)
}_{A}
- \underbrace{
\int_\Omega  MT_n\Big(\frac{\psi_n}{M}\Big) \bkappa \cdot \nabla \Big( S_k\Big( \frac{\psi_n}{M}\Big) \Big)
}_{B}
=
\underbrace{\phantom{\Big|}\langle f , \fy \rangle\phantom{\Big|}}_{C}.
\end{equation*}
\noindent $\bullet$ Since $S_k+T_k=\mathrm{id}$ and for all~$r\in \R$ we have $S_k'(r)=0$ or $T_k'(r)=0$ we deduce that the first term~$A$ is written
\begin{equation}\label{equ-a2250}
A
=
\int_\Omega M \Big| \nabla \Big( S_k\Big( \frac{\psi_n}{M}\Big) \Big) \Big|^2
=
\Big\|M S_k\Big( \frac{\psi_n}{M}\Big) \Big\|_{H^1_{M,0}}^2.
\end{equation}
\noindent $\bullet$ Using the fact that for all $r\in \R$ we have $|T_n(r)|\leq |r|$ and using the Cauchy-Schwarz inequality, we estimate the second term~$B$ in the following way
\begin{equation*}
|B|
\leq
\|\bkappa\|_{L^\infty(\Omega)} \sqrt{\int_\Omega \frac{|\psi_n|^2}{M}}\sqrt{\int_\Omega M\Big| \nabla \Big( S_k\Big( \frac{\psi_n}{M}\Big) \Big) \Big|^2}.
\end{equation*}
However $| \nfrac{\psi_n}{M} | = | T_k(\nfrac{\psi_n}{M}) + S_k(\nfrac{\psi_n}{M}) | \leq k + |S_k(\nfrac{\psi_n}{M}) |$ thus $| \nfrac{\psi_n}{\sqrt{M}} | \leq k\sqrt{M} + \sqrt{M}|S_k(\nfrac{\psi_n}{M}) |$ and using the triangular inequality we obtain
\begin{equation*}
\sqrt{\int_\Omega \frac{|\psi_n|^2}{M}} \leq \sqrt{\int_\Omega k^2 M} + \sqrt{\int_\Omega M\Big| S_k\Big(\frac{\psi_n}{M}\Big) \Big|^2} = k + \Big\|M S_k\Big( \frac{\psi_n}{M}\Big) \Big\|_{L^2_M}.
\end{equation*}
Since $S_k(r)=0$ for $|r|<k$, we can estimate this last term as follows:
\begin{equation*}
\Big\|M S_k\Big( \frac{\psi_n}{M}\Big) \Big\|_{L^2_M}^2 = \int_\Omega M\Big| S_k\Big(\frac{\psi_n}{M}\Big) \Big|^2 = \int_{\mathcal E_k} M\Big| S_k\Big(\frac{\psi_n}{M}\Big) \Big|^2,
\end{equation*}
where we recall that $\mathcal E_k=\{\Q\in \Omega\ ;\ |\psi_n(\Q)|\geq kM(\Q)\}$. According to the Hölder inequality, for all~$p>1$, denoting by~$q$ the conjugate of~$p$ (i.e. such that $\frac{1}{p}+\frac{1}{q}=1$) and using the estimate~\eqref{estimate2}, we obtain
\begin{equation*}
\begin{aligned}
\Big\|M S_k\Big( \frac{\psi_n}{M}\Big) \Big\|_{L^2_M}^2 
& \leq
\Big( \int_{\mathcal E_k} M \Big)^{1/q} \Big( \int_{\mathcal E_k} M\Big| S_k\Big(\frac{\psi_n}{M}\Big) \Big|^{2p} \Big)^{1/p} \\
& \lesssim
\frac{1}{(\ln(1+k))^{2/q}} \Big( \int_\Omega M\Big| S_k\Big(\frac{\psi_n}{M}\Big) \Big|^{2p} \Big)^{1/p}.
\end{aligned}
\end{equation*}
We thus control the~$L^2_M$-norm of $M S_k(\nfrac{\psi_n}{M})$ using his~$L^{2p}_M$-norm . But this~$L^{2p}_M$-norms can itself be controlled, for an adapted value of~$p$ by the~$H^1_M$-norm.
In fact, using the weighted Sobolev embedding (see Lemma~\ref{lem-injection}) there exists $p>1$ for which we have the inequality
\begin{equation*}
\begin{aligned}
\Big( \int_\Omega M\Big| S_k\Big(\frac{\psi_n}{M}\Big) \Big|^{2p} \Big)^{1/p}
& \lesssim
\Big\|M S_k\Big( \frac{\psi_n}{M}\Big) \Big\|_{H^1_M}^2 \\
& \lesssim
\Big\|M S_k\Big( \frac{\psi_n}{M}\Big) \Big\|_{H^1_{M,0}}^2 + \Big\|M S_k\Big( \frac{\psi_n}{M}\Big) \Big\|_{L^2_M}^2.
\end{aligned}
\end{equation*}
We deduce a control on the~$L^2_M$-norm of $M S_k(\nfrac{\psi_n}{M})$ using his~$H^1_{M,0}$-norm:
\begin{equation*}
\Big( 1-\frac{1}{(\ln(1+k))^{2/q}} \Big) \Big\|M S_k\Big( \frac{\psi_n}{M}\Big) \Big\|_{L^2_M}^2 
\lesssim
\frac{1}{(\ln(1+k))^{2/q}} \Big\|M S_k\Big( \frac{\psi_n}{M}\Big) \Big\|_{H^1_{M,0}}^2,
\end{equation*}
that is a control of the form $\big\|M S_k\big( \nfrac{\psi_n}{M}\big) \big\|_{L^2_M} \leq \mathcal A(k) \big\|M S_k\big( \nfrac{\psi_n}{M}\big) \big\|_{H^1_{M,0}}$ where $\mathcal A(k)$ tends to~$0$ when~$k$ tends to~$+\infty$. Hence, we obtain the following estimate for the term~$B$ of the left hand side of equation~\eqref{pb1-faible-n-Sk}:
\begin{equation}\label{equ-b2250}
|B|
\leq
\|\bkappa\|_{L^\infty(\Omega)} \Big( k + \mathcal A(k) \Big\|M S_k\Big( \frac{\psi_n}{M}\Big) \Big\|_{H^1_{M,0}} \Big) \Big\|M S_k\Big( \frac{\psi_n}{M}\Big) \Big\|_{H^1_{M,0}}.
\end{equation}
\noindent $\bullet$ The last term of the equation~\eqref{pb1-faible-n-Sk} is controlled as follow
\begin{equation}\label{equ-c2250}
\begin{aligned}
|C| = | \langle f,\fy \rangle |
& \lesssim
\|\fy\|_{H^1_{M,0}}
=
\sqrt{\int_\Omega M\Big| \nabla\Big(\frac{\fy}{M}\Big) \Big|^2} \\
& \lesssim
\sqrt{\int_\Omega M\Big| \nabla\Big(S_k\Big(\frac{\fy}{M}\Big)\Big) \Big|^2} 
=
\Big\| M S_k\Big(\frac{\psi_n}{M}\Big) \Big\|_{H^1_{M,0}}.
\end{aligned}
\end{equation}
The previous estimates~\eqref{equ-a2250}, \eqref{equ-b2250} and~\eqref{equ-c2250} enable the deduction, from equation~\eqref{pb1-faible-n-Sk}, for all $k\in \N$, of the following inequality
\begin{equation*}
\Big\|M S_k\Big( \frac{\psi_n}{M}\Big) \Big\|_{H^1_{M,0}} 
\lesssim
\|\bkappa\|_{L^\infty(\Omega)} \Big( k + \mathcal A(k) \Big\|M S_k\Big( \frac{\psi_n}{M}\Big) \Big\|_{H^1_{M,0}} \Big) + 1.
\end{equation*}
Since $\mathcal A(k)$ tends to~$0$ when~$k$ tends to~$+\infty$, it possible to obtain for a sufficiently large~$k$, the inequality
\begin{equation}\label{estimate3}
\Big\| M S_k\Big(\frac{\psi_n}{M}\Big) \Big\|_{H^1_{M,0}} \lesssim 1.
\end{equation}

\mathversion{bold}\noindent{\bf Estimate of $M T_k(\nfrac{\psi_n}{M})$ in $H^1_{M,0}$-norm~-~}\mathversion{normal}
Choose now $\fy=M T_k(\nfrac{\psi_n}{M}) - M\int_\Omega M T_k(\nfrac{\psi_n}{M})$ as a test function in equation~\eqref{pb1-faible-n} (according to Lemma~\ref{lem-test} we have $\fy\in H^1_{M,0}$). As for the estimate of $M S_k(\nfrac{\psi_n}{M})$, we study each of three terms, named $A$, $B$ and $C$ as previously, present in equation~\eqref{pb1-faible-n}.\\
$\bullet$ The first is written
\begin{equation*}
A
=
\int_\Omega M \Big| \nabla \Big( T_k\Big( \frac{\psi_n}{M}\Big) \Big) \Big|^2
=
\Big\|M T_k\Big( \frac{\psi_n}{M}\Big) \Big\|_{H^1_{M,0}}^2.
\end{equation*}
$\bullet$ For the second term, we proceed as follow:
\begin{equation*}
\begin{aligned}
|B|
& \leq
\|\bkappa\|_{L^\infty(\Omega)} \int_\Omega M \Big| T_n\Big(\frac{\psi_n}{M} \Big)\Big| |\nabla \Big( T_k\Big( \frac{\psi_n}{M}\Big) \Big)| \\
& \leq
\|\bkappa\|_{L^\infty(\Omega)} \int_\Omega |\psi_n| |\nabla \Big( T_k\Big( \frac{\psi_n}{M}\Big) \Big) |.
\end{aligned}
\end{equation*}
But for $|\nfrac{\psi_n}{M}|\geq k$ we have $\nabla ( T_k( \nfrac{\psi_n}{M})) = 0$ whereas for $|\nfrac{\psi_n}{M}| < k$ we clearly have $|\psi_n|<kM$ and consequently, according to the Cauchy-Schwarz inequality we obtain
\begin{equation*}
\begin{aligned}
|B|
& \leq
\|\bkappa\|_{L^\infty(\Omega)} \int_\Omega k M \Big|\nabla \Big( T_k\Big( \frac{\psi_n}{M}\Big) \Big)\Big| \\
& \leq 
\|\bkappa\|_{L^\infty(\Omega)} \sqrt{\int_\Omega k^2 M} \sqrt{\int_\Omega M \Big|\nabla \Big( T_k\Big( \frac{\psi_n}{M}\Big) \Big)\Big|^2} \\
& \leq
k \|\bkappa\|_{L^\infty(\Omega)} \Big\|M T_k\Big( \frac{\psi_n}{M}\Big) \Big\|_{H^1_{M,0}}.
\end{aligned}
\end{equation*}
\noindent $\bullet$ The last term is treated like those of the previous estimates:
\begin{equation*}
|C| = | \langle f , \fy \rangle |
\lesssim
\Big\|M T_k\Big( \frac{\psi_n}{M}\Big) \Big\|_{H^1_{M,0}}.
\end{equation*}
These three estimates give (note that this estimate depends on~$k$, but that~$k$ has been fixed)
\begin{equation}\label{estimate4}
\Big\| M T_k\Big(\frac{\psi_n}{M}\Big) \Big\|_{H^1_{M,0}} \lesssim 1 .
\end{equation}

\mathversion{bold}\noindent{\bf Estimate of $\psi_n$ in $H^1_{M,0}$~-~}\mathversion{normal}
Since for all $k\in \N$ we have $S_k+T_k=\mathrm{id}$ we obtain
\begin{equation*}
\begin{aligned}
\|\psi_n\|_{H^1_{M,0}}
& =
\Big\|MS_k\Big(\frac{\psi_n}{M}\Big) + MT_k\Big(\frac{\psi_n}{M}\Big)\Big\|_{H^1_{M,0}} \\
& \leq
\Big\|MS_k\Big(\frac{\psi_n}{M}\Big)\Big\|_{H^1_{M,0}} + \Big\|MT_k\Big(\frac{\psi_n}{M}\Big)\Big\|_{H^1_{M,0}}.
\end{aligned}
\end{equation*}
Using the estimates~\eqref{estimate3} and~\eqref{estimate4} we deduce that for all $n\in \N$ we have
\begin{equation}\label{estimate5}
\|\psi_n\|_{H^1_{M,0}} \lesssim 1.
\end{equation}

\mathversion{bold}\noindent{\bf Convergence of the sequence $\{\psi_n\}_{n\in \N}$~-~}\mathversion{normal}
According to the estimate~\eqref{estimate5} the sequence $\{\psi_n\}_{n\in \N}$ is bounded in~$H^1_{M,0}$. According to the Lemma~\ref{lem-compacity} a subsequence of the sequence $\{\psi_n\}_{n\in \N}$ (always denoted by $\{\psi_n\}_{n\in \N}$) admits a limit~$\psi$, weak in~$H^1_{M,0}$ and strong in~$L^2_M$. In order to perform the limit in equation~\eqref{pb1-faible-n}, it is sufficient to prove that the sequence $\{MT_n(\nfrac{\psi_n}{M})\}_{n\in \N}$ tends to~$\psi$ in~$L^2_M$. We obtain
\begin{equation*}
\Big\|MT_n\Big(\frac{\psi_n}{M}\Big)-\psi\Big\|_{L^2_M}^2 
\leq
\Big\|MT_n\Big(\frac{\psi_n}{M}\Big)-MT_n\Big(\frac{\psi}{M}\Big)\Big\|_{L^2_M}^2 
+
\Big\|MT_n\Big(\frac{\psi}{M}\Big)-\psi\Big\|_{L^2_M}^2.
\end{equation*}
However the application $T:\R\rightarrow \R$ is $1$-lipschitz and we have
\begin{equation*}
\begin{aligned}
\Big\|MT_n\Big(\frac{\psi_n}{M}\Big)-MT_n\Big(\frac{\psi}{M}\Big)\Big\|_{L^2_M}^2 
& =
\int_\Omega M \Big| T_n\Big(\frac{\psi_n}{M}\Big) - T_n\Big(\frac{\psi}{M}\Big) \Big|^2 \\
& \leq 
\int_\Omega M \Big| \frac{\psi_n}{M} - \frac{\psi}{M} \Big|^2
=
\|\psi_n-\psi\|_{L^2_M}^2,
\end{aligned}
\end{equation*}
which proves that $\|MT_n(\nfrac{\psi_n}{M})-MT_n(\nfrac{\psi}{M})\|_{L^2_M}$ tends to~$0$ when~$n$ tends to~$+\infty$. As regards the other term, the Lebesgue convergence dominated theorem directly affirms that $\|MT_n(\nfrac{\psi}{M})-\psi\|_{L^2_M}$ also tends to~$0$ when~$n$ tends to~$+\infty$. Finally, it was shown that the sequence $\{MT_n(\nfrac{\psi_n}{M})\}_{n\in \N}$ converges to~$\psi$ in~$L^2_M$ and consequently that~$\psi$ is a solution of
\begin{equation*}
\int_\Omega M \nabla\Big( \frac{\psi}{M}\Big) \cdot \nabla\Big( \frac{\fy}{M}\Big) - \int_\Omega \psi \bkappa \cdot \nabla\Big( \frac{\fy}{M}\Big) = \langle f,\fy \rangle \qquad \forall \fy\in H^1_{M,0}.
\end{equation*}

\subsection{Uniqueness proof in Theorem~\ref{th1}}

\noindent{\bf Main steps for the uniqueness proof~-~}
To prove uniqueness, we proceed as follows: We start by introducing the dual problem. It is shown that this dual problem admits a solution by using the Schauder topological degree method. Then, by using the existence both problem and its dual, we deduce uniqueness from these two problems.\\

\noindent{\bf Introduction of the dual problem~-~}
For $g\in H^{-1}_M$ let us consider the elliptic partial differential equation
\begin{equation}\label{pb1-1dual}
-\div\Big( M\nabla\Big( \frac{\phi}{M}\Big)\Big) -  M\bkappa\cdot \nabla \Big( \frac{\phi}{M} \Big) = g 
\quad \text{on~$\Omega$}
\end{equation}
and we look for a solution~$\phi\in H^1_{M,0}$ to this equation.\\

\noindent{\bf A compact application for the dual problem~-~}
For $\tphi \in H^1_{M,0}$ we have $M\bkappa\cdot \nabla ( \nfrac{\tphi}{M} )\in L^2_M \subset H^{-1}_M$ since $\| M\bkappa\cdot \nabla ( \nfrac{\tphi}{M} ) \|_{L^2_M} \leq \|\bkappa\|_{L^\infty(\Omega)} \|\tphi\|_{H^1_{M,0}}$. Since the operator $\fy\mapsto -\div ( M\nabla(\nfrac{\fy}{M}))$ is coerciv in~$H^1_{M,0}$, there exists thus a unique solution $\phi = G(\tphi) \in H^1_{M,0}$ such that for all $\fy\in H^1_{M,0}$
\begin{equation}\label{pb1-faibledual1}
\int_\Omega M \nabla\Big( \frac{\phi}{M}\Big) \cdot \nabla\Big( \frac{\fy}{M}\Big) - \int_\Omega \fy\bkappa\cdot \nabla \Big( \frac{\tphi}{M} \Big) 
=
\langle g,\fy \rangle.
\end{equation}
\noindent This defines an application $G:H^1_{M,0} \rightarrow H^1_{M,0}$. It is quite easy to see that~$G$ is continuous; indeed, if~$\tphi_n$ tends to~$\tphi$ in~$H^1_{M,0}$ then $M\bkappa\cdot \nabla ( \nfrac{\tphi_n}{M} )$ tends to $M\bkappa\cdot \nabla ( \nfrac{\tphi}{M} )$ in~$H^{-1}_M$ (more precisely in~$L^2_M$). Thus $\div ( M\nabla ( \nfrac{\phi_n}{M} ) )$ tends to $\div ( M\nabla ( \nfrac{\phi}{M} ) )$ which implies that $\phi_n=G(\tphi_n)$ tends to $\phi=G(\tphi)$ in~$H^1_{M,0}$.\par

\noindent We will now prove that~$G$ is a compact operator. Suppose that the sequence $\{\tphi_n\}_{n\in \N}$ is bounded in~$H^1_{M,0}$; then $\{M\bkappa\cdot \nabla ( \nfrac{\tphi_n}{M} )\}_{n\in \N}$ is bounded in~$H^{-1}_M$ so that, using $\fy=G(\tphi_n)=\phi_n$ as a test function in the equation satisfied by~$\phi_n$, we get using the Lemma~\ref{lem-poincare}
\begin{equation*}
\|\phi_n\|_{H^1_{M,0}}^2 \lesssim \Big(1+\Big\|M\bkappa\cdot \nabla \Big( \frac{\tphi_n}{M} \Big)\Big\|_{H^{-1}_M}\Big)\|\tphi_n\|_{H^1_{M,0}},
\end{equation*}
which implies that the sequence~$\{\phi_n\}_{n\in \N}$ is bounded in~$H^1_{M,0}$. Using the Lemma~\ref{lem-compacity}, up to a subsequence, we can thus suppose that~$\{\phi_n\}_{n\in \N}$ converges a.e. on~$\Omega$ and is bounded in~$L^2_M$. Let $(n,m)\in \N^2$; subtract the equation satisfied by~$\phi_m$ to the equation satisfied by~$\phi_n$ and use $\fy=\phi_n-\phi_m$ as a test function, this gives using the Lemma~\ref{lem-poincare} again
\begin{equation*}
\|\phi_n-\phi_m\|_{H^1_{M,0}}^2 
\leq
\Big| \int_\Omega (\phi_n-\phi_m) \bkappa\cdot \nabla \Big( \frac{\tphi_n-\tphi_m}{M}\Big) \Big| 
\lesssim
\|\phi_n-\phi_m\|_{L^2_M}.
\end{equation*}
From the strong convergence of~$\{\phi_n\}_{n\in \N}$ to~$\phi$ in~$L^2_M$ we deduce that the sequence~$\{\phi_n\}_{n\in \N}$ is a Cauchy sequence in~$H^1_{M,0}$ and converges in this space. We deduce that the application~$G$ is compact.\\

\noindent{\bf Existence result for the dual problem using the Leray-Schauder topological degree~-~}
\noindent According to the Leray-Schauder topological theory (see the founder article of J. Leray and J. Schauder~\cite{Leray}) since the operator~$G$ introduced with equation~\eqref{pb1-faibledual1} is a compact operator, to prove that it has a fixed point, we just have to find $R>0$ such that for all $s\in [0,1]$ there exists no solution of $\phi-sG(\phi) = 0$ satisfying $\|\phi\|_{H^1_{M,0}}=R$.\\
Let $s\in [0,1]$ and suppose that $\phi \in H^1_{M,0}$ satisfies $\phi=sG(\phi)$. We have for all $\fy\in H^1_{M,0}$
\begin{equation}\label{pb1-faibledual12}
\int_\Omega M \nabla\Big( \frac{\phi}{M}\Big) \cdot \nabla\Big( \frac{\fy}{M}\Big) - s\int_\Omega \fy \bkappa\cdot \nabla \Big( \frac{\phi}{M} \Big) = \langle sg,\fy \rangle.
\end{equation}
Using the ``non-dual'' problem (see the existence proof of Theorem~\ref{th1} where we obtain an existence solution of equation~\eqref{pb1-faible-n}), we know that for all $f\in H^{-1}_M$ there exists at least one solution $\psi\in H^1_{M,0}$ such that for all $\fy\in H^1_{M,0}$
\begin{equation}\label{pb1-faible12}
\int_\Omega M \nabla\Big( \frac{\psi}{M}\Big) \cdot \nabla\Big( \frac{\fy}{M}\Big) 
-
s\int_\Omega \psi \bkappa \cdot \nabla\Big( \frac{\fy}{M}\Big) 
=
\langle f,\fy \rangle.
\end{equation}
Moreover, according to estimate~\eqref{estimate5} there exists $C_1\in \R^+$ such that for all $f\in H^{-1}_M$ with $\|f\|_{H^{-1}_M}\leq 1$ and for all $s\in [0,1]$ we have $\|\psi\|_{H^1_{M,0}} \leq C_1$. We can verify that this constant~$C_1$ depends only on $\|f\|_{H^{-1}_M}$ and can be selected independently on the function~$f$ when $\|f\|_{H^{-1}_M}\leq 1$. In addition according to the estimates obtained in the existence proof of Theorem~\ref{th1} this constant~$C_1$ depends on $\|s \bkappa\|_{L^\infty(\Omega)}\leq \|\bkappa\|_{L^\infty(\Omega)}$ and consequently the constant~$C_1$ can also be selected independently of~$s$.\par
\noindent By taking $\fy=\phi$ in the equation~\eqref{pb1-faible12} satisfied by~$\psi$ and $\fy=\psi$ in the equation~\eqref{pb1-faibledual12} satisfied by~$\phi$, we obtain
\begin{equation*}
\langle f,\phi \rangle = \langle sg,\psi \rangle \leq s \|g\|_{H^{-1}_M} C_1 \leq \|g\|_{H^{-1}_M} C_1 := C_2.
\end{equation*}
Since this inequality is satisfied for all $f\in H^{-1}_M$ such that $\|f\|_{H^{-1}_M}\leq 1$, we deduce that $\|\phi\|_{H^1_{M,0}} \leq C_2$.\par
\noindent Now take $R=C_2+1$. We have just proven that, for any $s\in[0,1]$, any solution of $\phi-sG(\phi) = 0$ satisfies $\|\phi\|_{H^1_{M,0}} < R$; thus by the Leray-Schauder topological degree theory, the application~$G$ has a fixed point, that is to say a solution of~\eqref{pb1-faibledual1}.\\

\noindent{\bf Uniqueness~-~}
Since the equation~\eqref{pb1-faible} is linear, it is sufficient to prove that the only solution of~\eqref{pb1-faible} without source term, i.e. taking $f=0$, is the null function. Let~$\psi$ be a solution of~\eqref{pb1-faible} with $f=0$ and let~$\phi$ be a solution of~\eqref{pb1-1dual} with
$g=\mathrm{sign}(\psi) \in H^{-1}_M$.
By putting $\fy=\phi$ as a test function in the equation~\eqref{pb1-faible} satisfied by~$\psi$ and $\fy=\psi$ as a test function in the weak formulation of the equation~\eqref{pb1-1dual} satisfied by~$\phi$, we respectively get
\begin{equation*}
\begin{aligned}
& \int_\Omega M \nabla\Big( \frac{\psi}{M}\Big) \cdot \nabla\Big( \frac{\phi}{M}\Big) - \int_\Omega \psi \bkappa \cdot \nabla\Big( \frac{\phi}{M}\Big) = 0 \qquad \text{and}\\
& \int_\Omega M \nabla\Big( \frac{\phi}{M}\Big) \cdot \nabla\Big( \frac{\psi}{M}\Big) - \int_\Omega \psi \bkappa \cdot \nabla \Big( \frac{\phi}{M} \Big) = \langle \mathrm{sign}(\psi),\psi \rangle.
\end{aligned}
\end{equation*}
We deduce that $\langle \mathrm{sign}(\psi),\psi \rangle = 0$, that is to say $\int_\Omega |\psi| = 0$ and then $\psi=0$.

\begin{rema}
A similar reasoning gives the uniqueness of the solution of the dual problem~\eqref{pb1-faibledual1}.
\end{rema}

\section{Application to fluid mechanics}\label{sec-numeric}

\subsection{The FENE model for dilute polymers}\label{sub3-1}

\noindent A natural framework where vectors fields strongly explode at the boundaries of a domain is the framework of the modeling of the spring whose extension is finite (that is physically realist). In fluid mechanics, such an approach is used to develop polymer models in solution. It is this point of view which we have chooses to present in order to illustrate the preceding theoretical study.\\
The simplest micro-mechanical approach to model the polymer molecules in a dilute solution is the dumbbell model in which the polymers are represented by two beads connected by a spring. The configuration vector~$\Q$ describes the orientation and the elongation of such a dumbbell~\cite{Lelievre1,Chauviere}. The force of the spring is governed by some law that should be derived from physical arguments. We choose here the popular FENE model, in which the maximum extensibility of the dumbbell is fixed at some value determined by the dimensionless parameter~$\ell$ and the spring force takes the simple form
\begin{equation*}
\E(\Q) = \frac{\Q}{1-\nfrac{|\Q|^2}{\ell^2}}.
\end{equation*}
The configuration vector~$\Q$ depends on time~$t$ and macroscopic position of the dumbbell~$\x$ in the flow. Moreover, it satisfies the following stochastic differential equation (see~\cite{Ottinger} for details):
\begin{equation}\label{EDS}
d\Q = \big( (\nabla \u)^T \cdot\Q - \frac{1}{2\De} \E(\Q) \big)\, dt + \sqrt{\frac{1}{2\De}} \,d\W,
\end{equation}
where the $2$-tensor~$(\nabla \u)^T$ is the transposed velocity gradient, $\De$ is a dimensionless number called the Deborah number (linked to the relaxation time of the fluid) and~$\W$ is the Wiener random process that accounts for the Brownian forces acting on each bead. Equation~\eqref{EDS} should be understood as the Itô ordinary stochastic differential equations along the particle paths since the dumbbells'centers of mass are supposed on average to follow the particules of the solvent fluid.\\
As is well known (see Section 3.3 of~\cite{Ottinger}), every Itô ordinary stochastic differential equation can be associated with a partial differential equation for the probability density function $\fy(t,\x,\Q)$ of the random process $\Q(t,\x)$. In particular, equation~\eqref{EDS} implies the following, also called Fokker-Planck, equation for $\fy(t,\x,\Q)$:
\begin{equation}\label{FP1357}
\frac{\partial \fy}{\partial t} + \u\cdot \nabla_\x \fy 
= 
\frac{1}{2\De} \Delta_\Q \fy - \div_\Q \Big(\fy \Big( (\nabla \u)^T \cdot \Q - \frac{1}{2\De} \E \Big) \Big).
\end{equation}
%
\begin{figure}[htbp]
\begin{center}
{\psfrag{Flow}{Flow}\psfrag{Physical domain}{Physical domain}
\includegraphics[width=10cm]{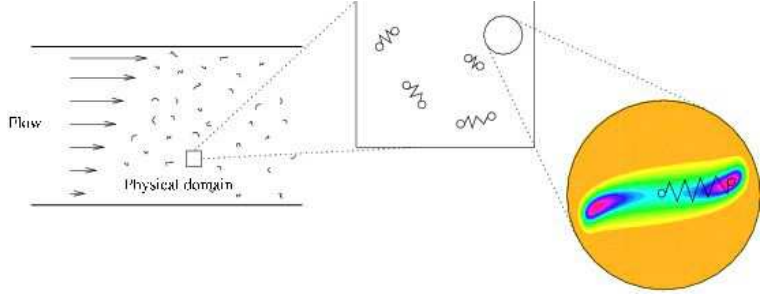}
}
\caption{On the left, we have drawn the physical domain of a real flow for a diluted polymers solution. From the microscopic point of view, the polymer chains are identified to independent mass-springs systems (called dumbbells). The orientation and the length of each dumbbell is governed by a quantity (denoted by~$\fy$ in this paper) distributed in a ball whose radius corresponds to the maximum extension of the spring. On the right, the colors correspond to the various probabilities that dumbbell be in the given position. For instance, the drawn dumbbell is the dumbell which has the most chance to be present (with its ``symmetrical'' compared to the center of the ball).}
\end{center}
\end{figure}
%

\noindent In certain modes the dominating terms correspond to the terms of the right-hand side member of equation~\eqref{FP1357}. It is the case, for instance, when the flow is supposed to be thin, see~\cite{Chupin}. In these configurations, the distribution~$\fy$ can be seen like depending only on~$\Q$ (to be rigorous, $\fy$ also depends on time~$t$ and on the macroscopic position~$\x$, {\it via} the presence of the gradient $\nabla \u(t, \x)$ but these dependences can be seen as parameters) and the equation~\eqref{FP1357} is approached by the following Fokker-Planck equation on~$\fy(\Q)$:
\begin{equation}\label{FP1436}
-\Delta \fy + \div(\fy \, \F) = 0 \quad \text{in $B(0,\ell)$},
\end{equation}
with $\F = 2\De\, (\nabla \u)^T \cdot \Q - \E$. It exactly corresponds to those studied in the first part of this paper (see equation~\eqref{FP}) in the case $\Omega=B(0,\ell)$ and without source term: $f=0$.\\
Although it is wished that the solution $\fy(\Q)$ cancels for values~$\Q$ such that $|\Q|=\ell$ (i.e. we wishe that the maximum length of the springs is $\ell$ and that there is no spring of this length), the classical framework of the Theorem~\ref{th-Droniou} does not correspond to this equation provided with the homogeneous Dirichlet boundary conditions. In fact, the force~$\F$ is not sufficiently regular: we have $\F\notin L^d(B(0,\ell))$. Roughly speaking, the FENE model takes into account the finite extensibility of the polymer chain, through an important explosive force when~$|\Q|$ tends to~$\ell$.\\
On the other hand, this force $\F$ perfectly corresponds to the principal result shown in this article (see Theorem~\ref{the-main}). More precisely, the vector field $\Q\in B(0,\ell)\mapsto 2\De\, (\nabla \u)^T \cdot \Q$ is clearly bounded and we can write the ``explosive'' term~$\E$ as follow:
\begin{equation*}
\E = -\nabla V
\quad \text{with} \quad 
V(\Q) = \frac{\ell^2}{2}\ln \Big( 1-\frac{|\Q|^2}{\ell^2} \Big).
\end{equation*}
To make appear the maxwellian function~$M$ as it is used in this paper, we write
\begin{equation}\label{maxwellian1511}
M(\Q) 
= 
\frac{\dsp e^{V(\Q)}}{\dsp \int_\Omega e^{V({\bf R})}d{\bf R}}
=
\frac{\dsp \Big( 1-\nfrac{|\Q|^2}{\ell^2} \Big)^{\nfrac{\ell^2}{2}}}{\dsp \int_{B(0,\ell)} \Big( 1-\nfrac{|{\bf R}|^2}{\ell^2} \Big)^{\nfrac{\ell^2}{2}}d{\bf R}}.
\end{equation}
From Theorem~\ref{the-main} we deduce that if $\ell>\sqrt{2}$ then for any $\rho\in \R$ there exists a unique (weak) solution of the Fokker-Planck equation~\eqref{FP1436} such that $\int_{B(0,\ell)}\fy = \rho$.
According to H.C. Öttinger~\cite{Ottinger}, the number~$\ell$ roughly measures the number of monomer units represented by a bead and it is generally larger than~$10$.
The assumption $\ell>\sqrt{2}$ is not constraining from the physical point of view. In fact, according to H.C. Öttinger~\cite{Ottinger}, the number~$\ell$ roughly measures the number of monomer units represented by a bead and it is generally larger than~$10$.\\
Moreover, impose the quantity $\int_{B(0,\ell)}\fy$ physically corresponds to given the density of the polymer chains. Hence this condition is relevant for the studied problem.
\begin{rema}\label{rem-corot}
If the tensor $(\nabla_\x \u)^T$ is replaced by its anti-symmetric part $\frac{1}{2}( \nabla_\x \u - (\nabla_\x \u)^T)$ in the force term~$\F$ then we get the so-called co-rotational FENE model. This case corresponds to a particular cases presented page~\pageref{some-known}: $\fy=M$ is a trivial solution of equation~\eqref{FP1436} (see~\cite{Chupin, Masmoudi}).
\end{rema}
\subsection{Numerical results}\label{sub3-2}
%
\noindent In this subsection, we present numerical result for the Fokker-Planck equation~\eqref{FP} for a confinement vector field~$\F$ coupled with the normalization condition $\int_\Omega \fy = \rho$, $\rho \in \R$, and then we apply the algorithm in the framework of fluid mechanics.
The main difficulty to obtain a numerical scheme for the Fokker-Planck equation within the normalized condition is to treat this normalized condition since the equation is not numerically difficult itself. Precisly, this condition is implemented by penalization.
For simulation, we use the FreeFem++ program\footnote{see http://www.freefem.org/ff++} which is based on weak formulation of the problem and finite elements method.\\[0.3cm]
\noindent In the fluid mechanics context, we want to observe the distribution of the orientation dumbells in a dilute polymer under shear (for instance with a given stationary velocity flow given of the form $\u(x_1,x_2)=(\dot{\gamma}\, x_2,0)$, $\dot{\gamma}\in \R$, in the $2$-dimensional case).
For simplicity, we make the presentation with the $2$-dimensional model. According to the previous subsection, the searched distribution satisfies the following Fokker-Planck equation
\begin{equation}\label{FPnum}
-\Delta \fy + \div(\fy\, \F) = 0 \quad \text{on $B(0,\ell)$},
\end{equation}
where the vector field~$\F$ is given by
\begin{equation*}
\F : 
\begin{pmatrix}
Q_1\\
Q_2\\
\end{pmatrix}
\in B(0,\ell) \longmapsto
2 \mathcal De \, \dot{\gamma}
\begin{pmatrix}
Q_2\\
0
\end{pmatrix}
- \frac{1}{1-\nfrac{|\Q|^²}{\ell^2}}
\begin{pmatrix}
Q_1\\
Q_2\\
\end{pmatrix}.
\end{equation*}
Moreover, the solution must be satisfy the relation $\int_{B(0,\ell)} \fy = \rho$. Notice that if we have a solution such that $\int_{B(0,\ell)} \fy = 1$ then, by linearity, the function $\overline \fy = \rho\fy$ is a solution such that $\int_{B(0,\ell)} \overline \fy = \rho$. In the numerical test, we always take $\rho=1$. The only two parameters which are interest are the product $\mathcal De \, \dot{\gamma}$ and the coefficient~$\ell$ which corresponds to the maximal elongation of the dumbells.\\
Without shear (that is for $\dot{\gamma}=0$), a trivial solution of the Fokker-Planck equation~\eqref{FPnum} exists: it is the maxwellian~$M$ (see its expression~\eqref{maxwellian1511}). For three characteristic maximal lenghts of the dumbells ($\ell=2$, $\ell=5$ and $\ell=10$), we have been represented this maxwellian on the figure~\ref{fig0}.
%
\begin{figure}[htbp]
\begin{center}
\includegraphics[width=2.75cm]{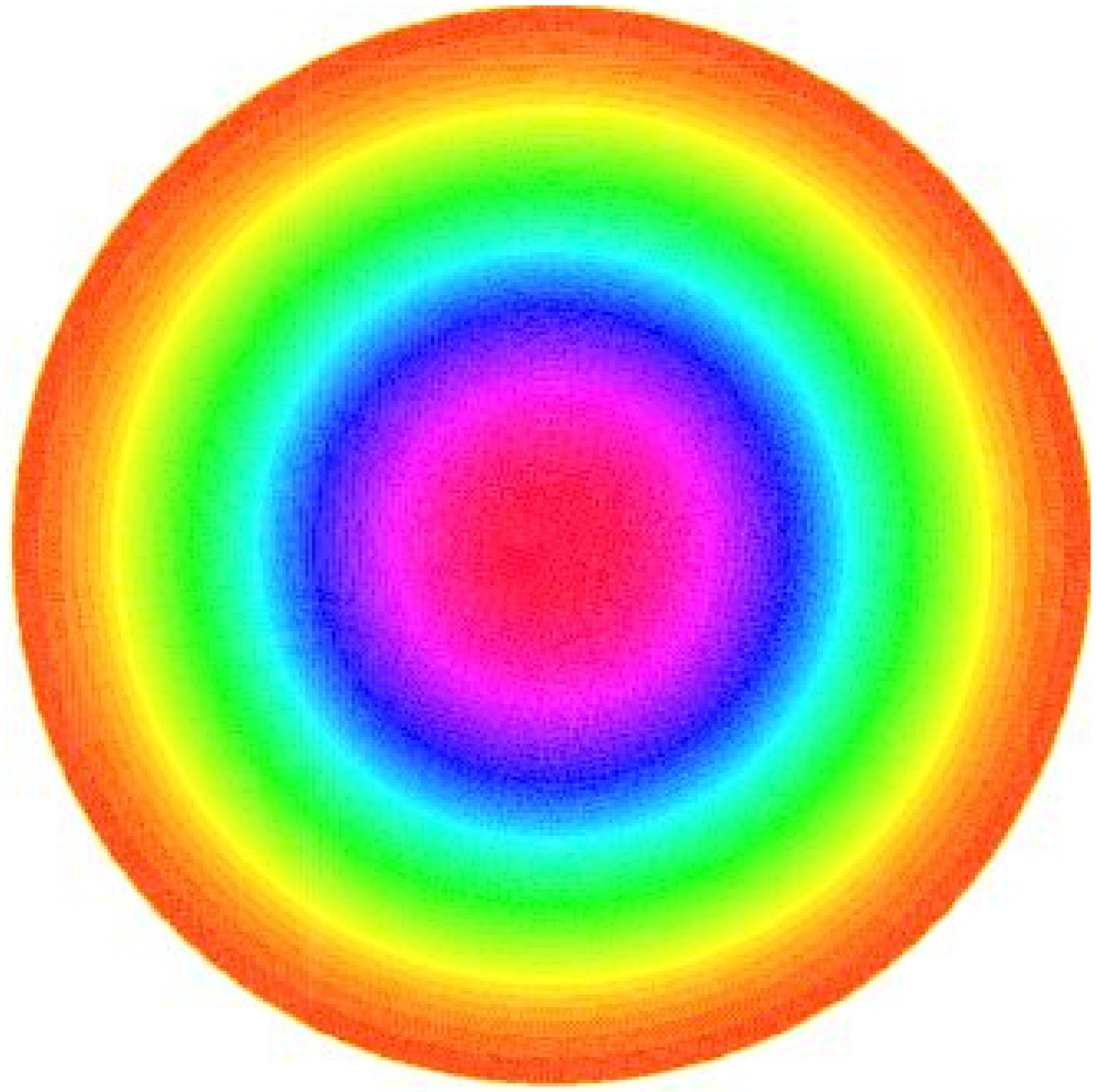}\qquad
\includegraphics[width=2.75cm]{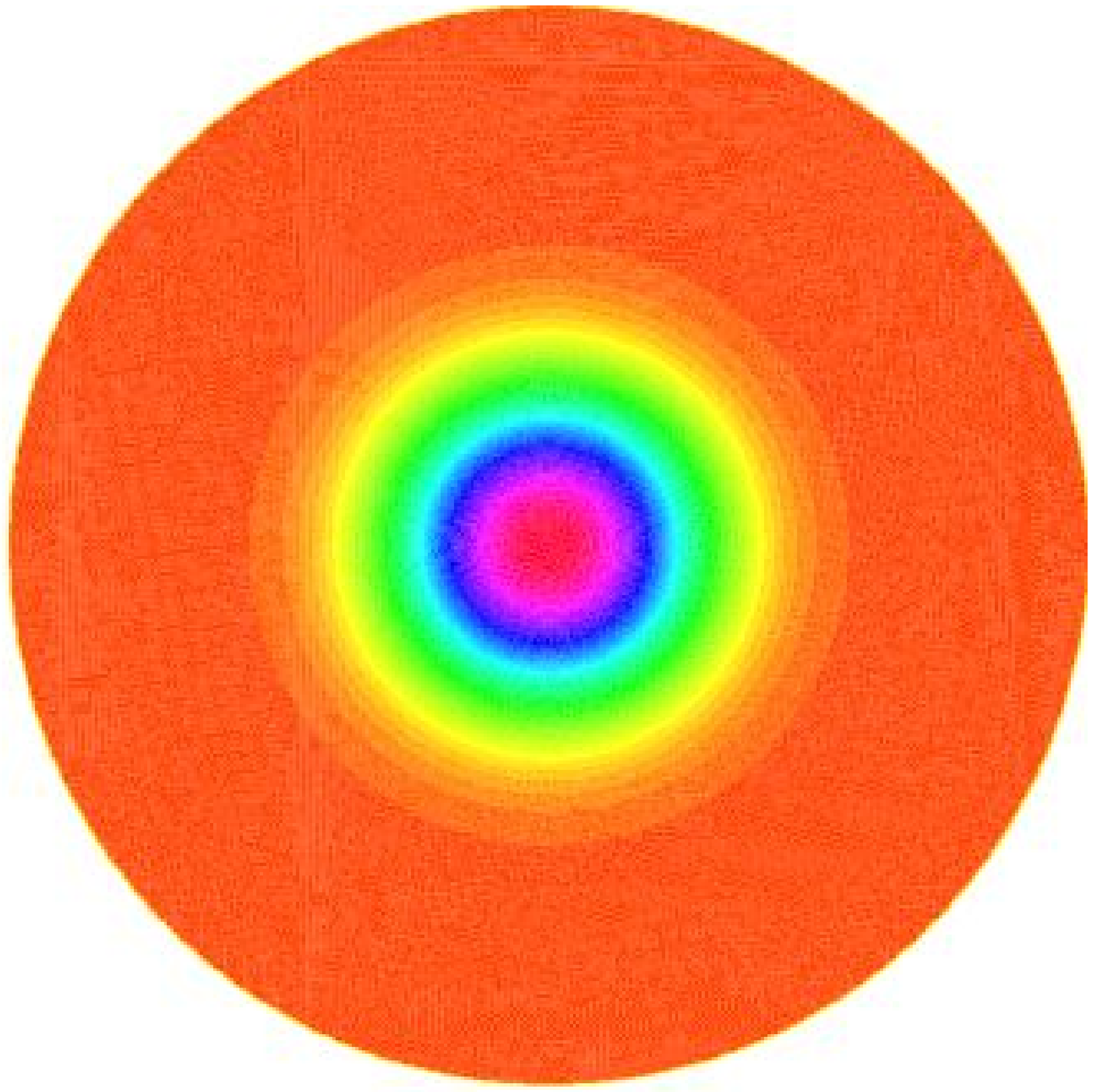}\qquad
\includegraphics[width=2.75cm]{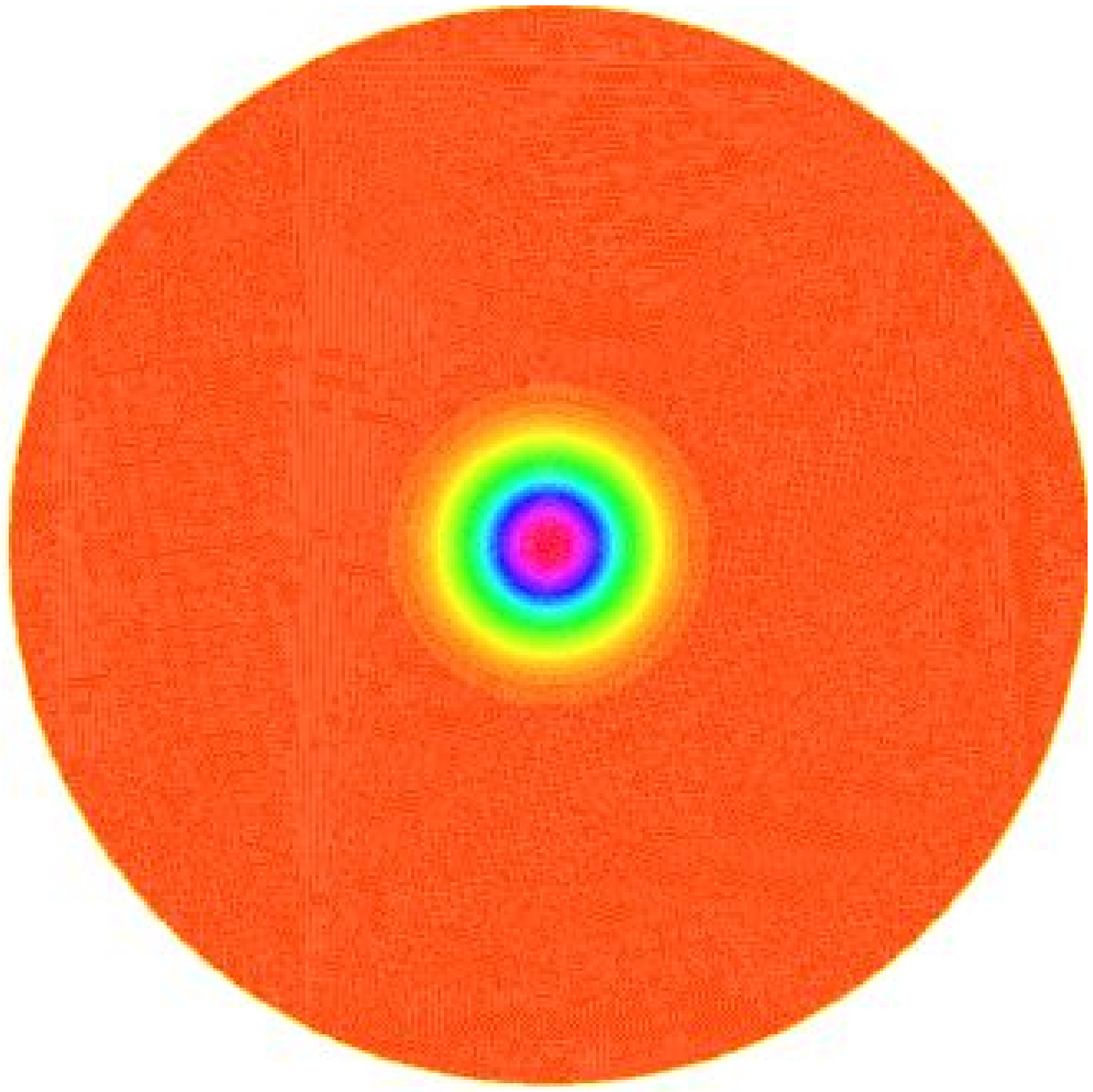}
\caption{Solution without shear for different maximal lengths of dumbells: $\ell=2$, $\ell=5$ and $\ell=10$.}
\label{fig0}
\end{center}
\end{figure}
%

\noindent To observe the influence of the shear on the distribution, taking $\mathcal De=10$, $\ell=5$ and different values of the shear coefficient $\dot{\gamma}\in\{0.1;0.2;0.5;1\}$. The for results are descibe on figure~\ref{fig1}.

\begin{figure}[htbp]
\begin{center}
\includegraphics[width=2.75cm]{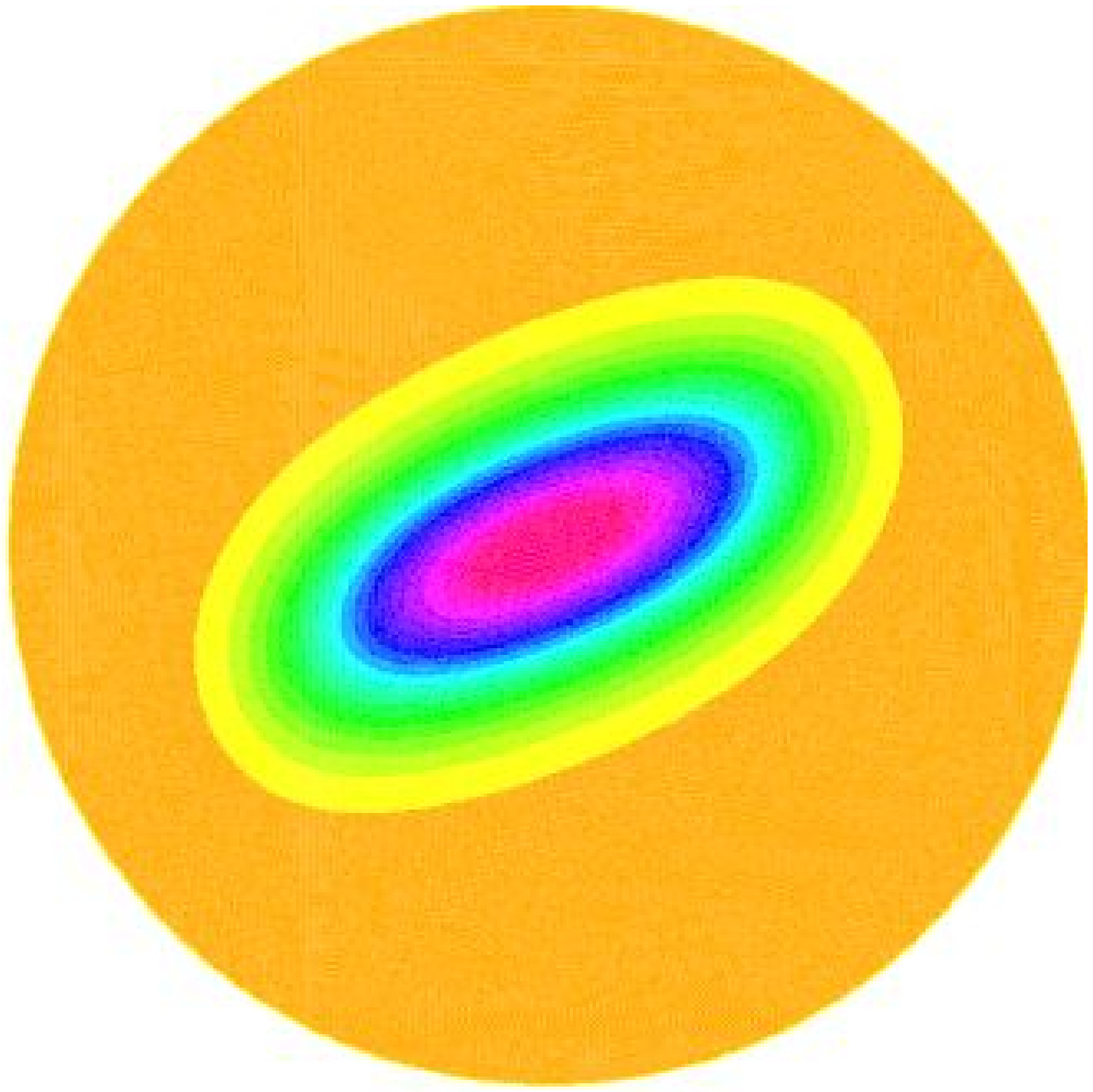}
\includegraphics[width=2.75cm]{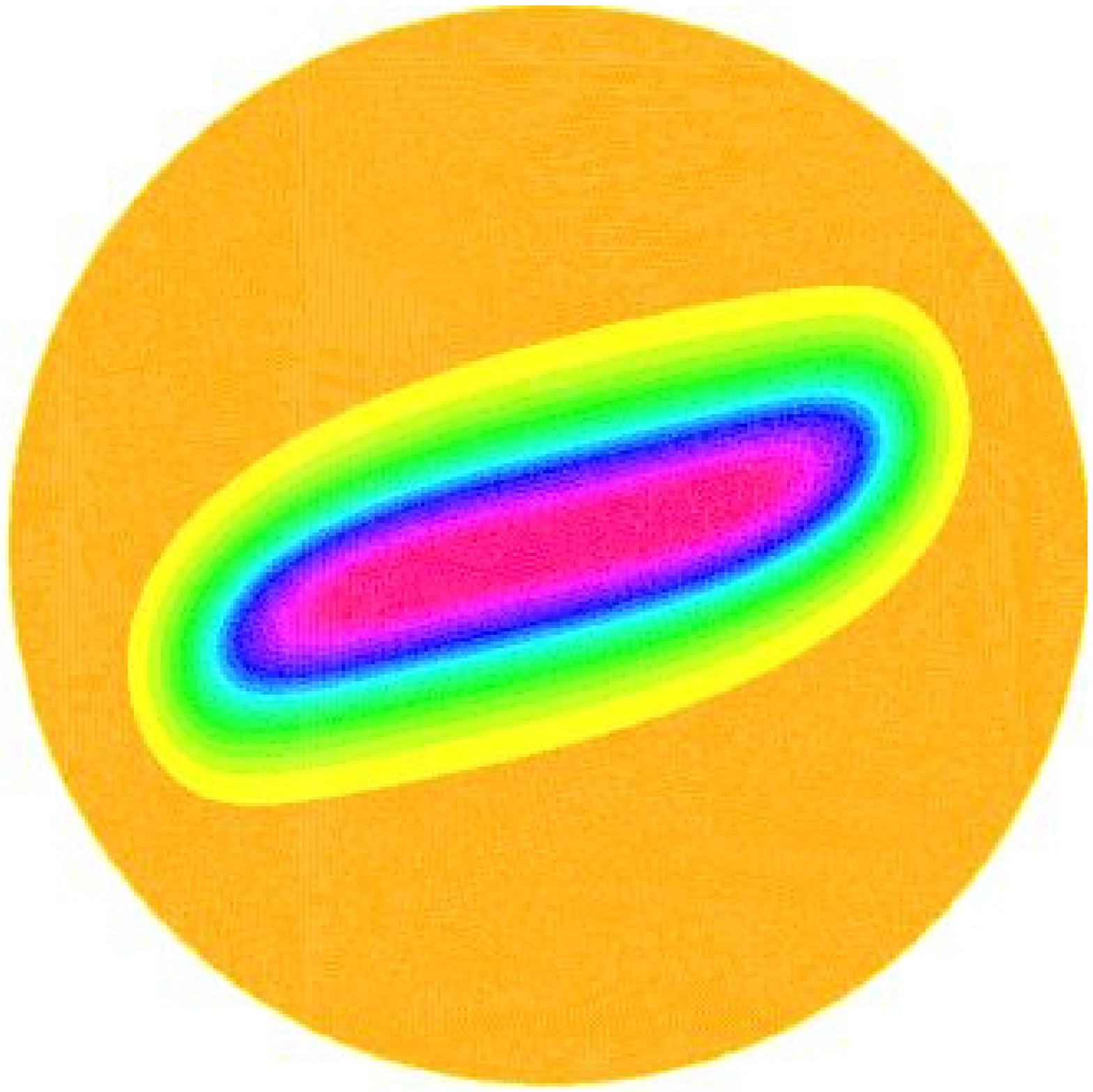}
\includegraphics[width=2.75cm]{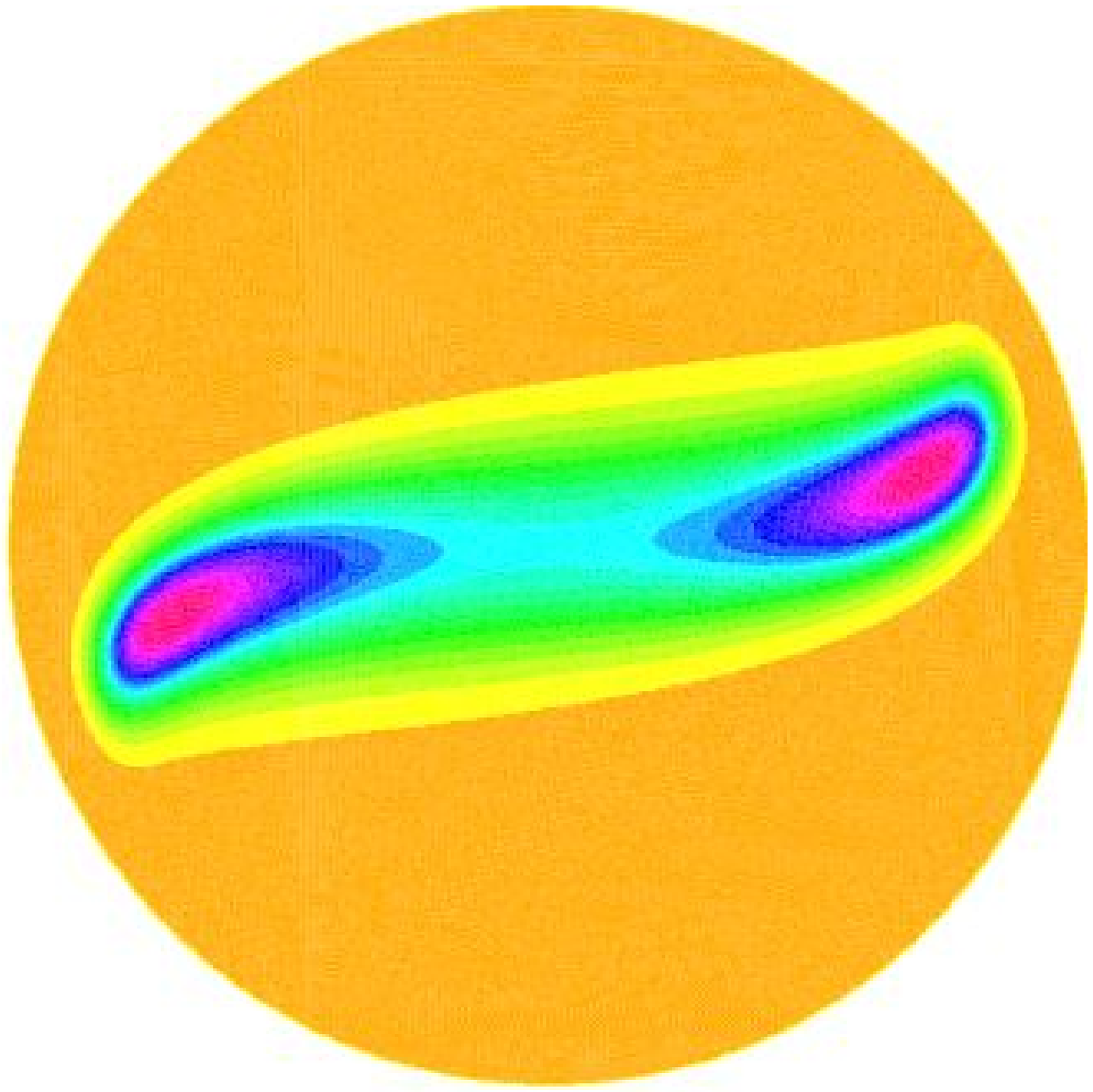}
\includegraphics[width=2.75cm]{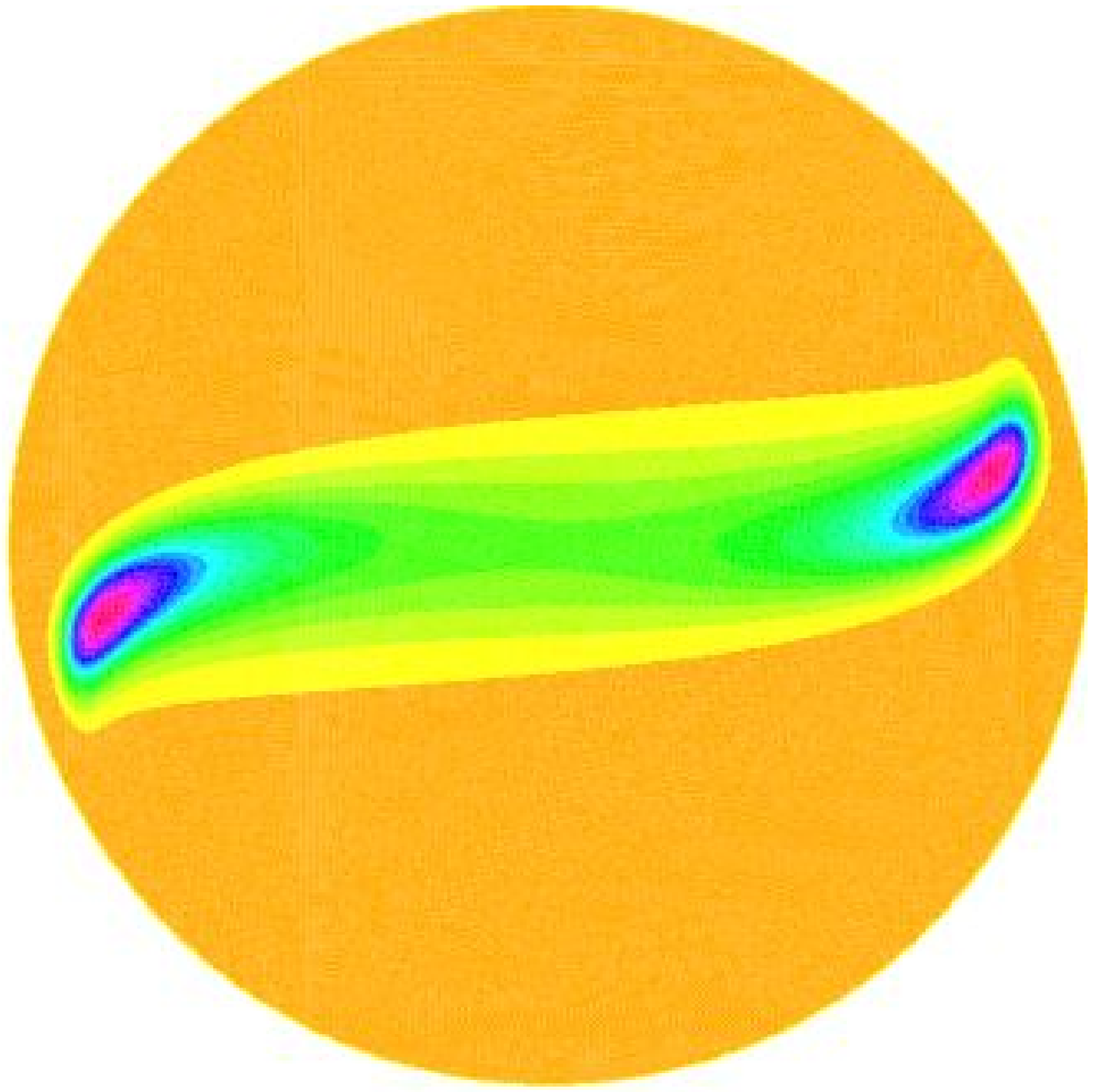}
\caption{
Shear influence on the distribution of the length of dumbells.
The more the shear is raised, the more the dumbbells tend to lengthen in the direction of the flow. The four figures above correspond (from left to right) to the values $\dot{\gamma}=0.1$, $\dot{\gamma}=0.2$, $\dot{\gamma}=0.5$ and $\dot{\gamma}=1$.
}
\label{fig1}
\end{center}
\end{figure}


\nocite{*}
\bibliographystyle{cdraifplain}
\bibliography{biblio}
\end{document}